\documentclass[11pt,reqno]{amsart}
\usepackage{amsmath,amssymb,latexsym,esint,cite,mathrsfs}
\usepackage{verbatim,wasysym}
\usepackage[left=1.8cm,right=1.8cm,top=2.4cm,bottom=2.4cm]{geometry}
\usepackage{tikz,enumitem,graphicx, subfig, microtype, color}
\usepackage{epic,eepic}

\usepackage[colorlinks=true,urlcolor=blue, citecolor=red,linkcolor=blue,
linktocpage,pdfpagelabels, bookmarksnumbered,bookmarksopen]{hyperref}
\usepackage[hyperpageref]{backref}
\usepackage[english]{babel}
\usepackage{appendix}

\numberwithin{equation}{section}

\newtheorem{thm}{Theorem}[section]
\newtheorem{lem}[thm]{Lemma}
\newtheorem{cor}[thm]{Corollary}
\newtheorem{Prop}[thm]{Proposition}

\newtheorem{Rem}[thm]{Remark}

\newtheorem{step}[thm]{Step}

\begin{document}
	\title[The nonlinear elliptic equations with critical Sobolev exponent]
	{Qualitative properties of blowing-up solutions
of nonlinear elliptic equations with critical Sobolev exponent}

	\author[M. Yang]{Minbo Yang$^\dag$}
	\address{\noindent Minbo Yang  \newline
		School of Mathematical Sciences, Zhejiang Normal University,\newline
		Jinhua 321004, Zhejiang, People's Republic of China.}\email{mbyang@zjnu.edu.cn}
	
	\author[S. Zhao]{Shunneng Zhao$^\ddag$}
	\address{\noindent Shunneng Zhao  \newline
School of Mathematical Sciences, Zhejiang Normal University,\newline
		Jinhua 321004, Zhejiang, People's Republic of China.
		\vspace{2mm}
		\newline
  Dipartimento di Matematica, Universit\`{a} degli Studi di Bari Aldo Moro,\newline Via Orabona 4, 70125 Bari, Italy.  }
	\email{snzhao@zjnu.edu.cn}

	\thanks{2020 {\em{Mathematics Subject Classification.}} Primary 35B40; 35B33;  Secondly  35J15; 35J60.}
	
	\thanks{{\em{Key words and phrases.}} Linearized problem; Critical exponents; Asymptotic behavior of solutions; Nondegeneracy.}
	
	\thanks{$^\dag$Minbo Yang was partially supported by the National Key Research and Development Program of China (No. 2022YFA1005700) and National Natural Science Foundation of China (12471114).}
	\thanks{$^\ddag$Shunneng Zhao was partially supported by National Natural Science Foundation of China (12401146, 12261107) and Natural Science Foundation of Zhejiang Province (LMS25A010007) and PNRR MUR project PE0000023 NQSTI - National Quantum Science and Technology Institute (CUP H93C22000670006).}
	
	\allowdisplaybreaks
	
	\begin{abstract}
		{\small
			In this paper, we are concerned with the critical elliptic equation
\begin{equation}\label{kx}
\left\lbrace
\begin{aligned}
    &-\Delta u=u^{p}+\epsilon \kappa(x)u^{q}\quad\hspace{2mm} \mbox{in}~~\Omega,
    \\&u>0\quad \quad\quad\quad\quad\quad\quad\quad\hspace{1mm}\hspace{0.5mm}~\mbox{in}~~\Omega
    \\&u=0\quad \quad\quad\quad\quad\quad\quad\quad\hspace{1mm}\hspace{0.5mm}~\mbox{on}~\partial\Omega,
 \end{aligned}
\right.
\end{equation}
where $\Omega$ is a smooth bounded domain in $\mathbb{R}^N$ for $N\geq3$, $p=(N+2)/(N-2)$, $1<q<p$, $\epsilon>0$ is a small parameter.
The existence of positive solutions to the above equation \eqref{kx} for small $\epsilon>0$ was obtained in Brezis-Nirenberg (1983) \cite{Brezis-1983} and Molle and Pistoia (2003) \cite{Molle}. However, the qualitative properties of single blow-up solutions to equation \eqref{kx}
is still unknown. Here we focus on the qualitative properties of single blow-up  solutions to the above equation \eqref{kx} for small $\epsilon$.

If $\kappa(x)=1$, by applying the various identities of derivatives of Green's function and the rescaled functions, with blow-up analysis,
we first provide a number of estimates on the
first $(N+2)$-eigenvalues and their corresponding eigenfunctions, and prove the qualitative behavior of the eigenpairs $(\lambda_{i,\epsilon}, v_{i,\epsilon})$ to the eigenvalue problem of the elliptic equation \eqref{kx} for $i=1,\cdots,N+2$. As a consequence, we have that the Morse index of a single-bubble solution is $N+1$ if the Hessian matrix of the Robin function is nondegenerate at a blow-up point.
Moreover, if $\kappa(x)\in C^2(\overline{\Omega})$, we show that, for  $\epsilon>0$ small, the asymptotic behavior of the solutions and nondegeneracy of the solutions for the problem \eqref{kx} under a nondegeneracy condition on the blow-up point of a "mixture" of both the matrix $\kappa(x)$ and Robin function.}
	\end{abstract}
	
	\vspace{3mm}
	
	\maketitle
	\section{Introduction}
\subsection{Motivation and main results}
	In this paper, we are concerned with the critical elliptic equation
\begin{equation}\label{ele-1.1-1}
\left\lbrace
\begin{aligned}
    &-\Delta u=u^{p}+\epsilon \kappa(x)u^{q} \quad \quad\mbox{in}\quad \Omega,\\
    &u>0\quad \quad \quad \quad\quad\quad\quad\quad\quad\hspace{0.5mm}  \mbox{in}\quad\Omega,\\
    &u=0\quad\quad\quad \quad\quad \quad\quad\quad\quad  \mbox{on}\hspace{2mm}\partial\Omega,
   \end{aligned}
\right.
\end{equation}
where $\Omega$ is a smooth bounded domain in $\mathbb{R}^N$ for $N\geq3$, $p=(N+2)/(N-2)$, $1\leq q<p$, $\kappa(x)\in C^2(\overline{\Omega})$ and $\epsilon>0$ is a small parameter. In the case $\kappa=-1$ and $q>p$, Merle and Peletier \cite{Merle} proved that for $N\geq3$ problem \eqref{ele-1.1-1} possesses a family of solutions concentrating at a point $x_0$, which is a critical point of the Robin function. Later, Musso and Pistoia in \cite{Musso-Pistoia-2003} proved that if $\kappa$ is a negative constant and $\Omega$ is a domain with a small hole, then problem \eqref{ele-1.1-1} has a family of solutions provided $\epsilon$ is small enough. For suitable $\kappa$, Brezis and Nirenberg \cite{Brezis-1983} proved the existence of the solutions of \eqref{ele-1.1-1} by using a variational argument  if $N\geq4$.
If $\Omega$ is star-shaped and $\kappa\equiv c\leq0$ in $\Omega$, then an application of the Pohozaev identity gives nonexistence of a nontrivial solution for \eqref{ele-1.1-1}. If $q=1$, Takahashi \cite{T} studied the qualitative
property of the blowing-up solutions to \eqref{ele-1.1-1}. Furthermore, by applying the Lyapunov-Schmidt reduction method, Molle and Pistoia in \cite{Molle} proved that $q\geq1$ if $N\geq5$, $q>1$ if $N=4$ and $q\neq p$, then there is a single-bubbling solution of \eqref{ele-1.1-1} having the form
\begin{equation}\label{pwconcentration-1}
u_{\epsilon}=PW[x_{\epsilon},\mu_{\epsilon}^{-1}\epsilon^{\frac{2}{N-6+q(N-2)}}]+R_{\epsilon},
\end{equation}
which blows-up at a point $x_0$ with the rate of the concentration $\mu_{0}^{-1}$ for $\mu_{\epsilon}^{-1}\rightarrow\mu_0^{-1}>0$, $x_{\epsilon}\rightarrow x_0\in\Omega$ and $R_{\epsilon}\rightarrow0$ in $H_{0}^1(\Omega)$ as $\epsilon\rightarrow0$. Here the function $PW[\xi,\mu]$ is the projection of $W[\xi,\mu]$ onto $H_{0}^{1}(\Omega)$, namely,
\begin{equation*}
\Delta PW[\xi,\mu]
=\Delta W[\xi,\mu],
\quad\text{in}\quad\Omega\quad
PW[\xi,\mu]=0\quad\text{on}\quad\partial\Omega,
\end{equation*}
where the bubble $W[\xi,\mu]$ with the concentration rate $\mu>0$ and the center $\xi=(\xi_1,\cdots,\xi_N)\in\mathbb{R}^N$,
\begin{equation}\label{WMIU}
W[\xi,\mu](x)=\left(\frac{\mu}{1+\big(\mu^2|x-\xi|^2/N(N-2)\big)}\right)^{\frac{N-2}{2}}\hspace{2mm}\mbox{for}\hspace{2mm}x\in\mathbb{R}^N,
\end{equation}
which are solutions of the equation
\begin{equation}\label{bec}
-\Delta W=W^{p}\quad \mbox{in}\ \ \mathbb{R}^N,\quad W>0\quad\mbox{in}\ \ \mathbb{R}^N.
\end{equation}

For the case $\kappa(x)=1$, equation \eqref{ele-1.1-1} goes back to the the
Brezis-Nirenberg problem
\begin{equation}\label{ele-1.1}
\left\lbrace
\begin{aligned}
    &-\Delta u=u^{p}+\epsilon u^{q} \quad \quad\mbox{in}\quad \Omega,\\
    &u>0\quad \quad \quad \quad\quad\quad\quad\hspace{1mm}  \mbox{in}\quad\Omega,\\
    &u=0\quad\quad\quad \quad\quad \quad\quad \hspace{0.5mm}  \mbox{on}\hspace{2mm}\partial\Omega,
   \end{aligned}
\right.
\end{equation}
Brezis and Nirenberg in a celebrated paper \cite{Brezis-1983} proved that if $N\geq4$ and $1<q<p$ problem \eqref{ele-1.1} has a solution for every $\epsilon>0$; in the case $N=3$, problem \eqref{ele-1.1} is much more delicate:
 \begin{itemize}
\item[$(a)$]
if $3<q<5$, problem \eqref{ele-1.1} has a solution for every $\epsilon>0$;
\item[$(b)$]
If $1<q\leq3$, it is only for large values of $\epsilon$ that problem \eqref{ele-1.1} possesses a solution.
\end{itemize}
In the case $q=1$, Brezis and Nirenberg \cite{Brezis-1983}
showed that if $N\geq4$, problem \eqref{ele-1.1} has a
	nontrivial solution for $\varepsilon\in(0,\lambda_1)$,  where $\lambda_1$ is the first eigenvalue of $-\Delta$ with Dirichlet boundary
	condition; when $N=3$ then there exists a constant $\lambda_{\ast}\in(0,\lambda_1)$ such that for any $\varepsilon\in(\lambda_{\ast},\lambda_1)$,  \eqref{ele-1.1} has a positive solution. If $\varepsilon=0$, problem~\eqref{ele-1.1} becomes again much more delicate. Pohozaev discovered in \cite{Pohozaev-1965} that \eqref{ele-1.1} does not have a solution if $\Omega$ is a star-shaped domain.  Bahri and Coron \cite{Bahri-1988} gave an existence result of a positive solution to problem \eqref{ele-1.1} when $\Omega$ has a
	nontrivial topology, while Ding \cite{Ding1989} constructed a solution to  \eqref{ele-1.1} in the case where $\Omega$ is contractible.
Concerning least energy solutions $u_{\epsilon}$ to \eqref{ele-1.1}, in \cite{GM} it was proved that
\begin{equation}\label{SS}
S_{\epsilon}(\Omega):=\frac{\displaystyle\int_{\Omega}|\nabla u_{\epsilon}|^2dx}{\big(\displaystyle\int_{\Omega}|u_{\epsilon}|^{p+1}dx\big)^{\frac{2}{p+1}}}\rightarrow S\quad\mbox{as}\quad\epsilon\rightarrow0,
\end{equation}
where $S$ is the best Sobolev constant defined by
	\begin{equation*}
		S:=\inf\Big\{\frac{\|\nabla u\|_{L^2}}{\|u\|_{L^{p+1}}}~:u\in \mathcal{D}^{1,2}(\mathbb{R}^N)\setminus\{0\}\Big\},
	\end{equation*}
where $\mathcal{D}^{1,2}(\mathbb{R}^N)$ denotes the closure of $C^\infty_c(\mathbb{R}^N)$ with respect to the norm $\|u\|_{\mathcal{D}^{1,2}(\mathbb{R}^N)}=\|\nabla u\|_{L^2}$.
In \cite{GM}, the authors also derived the precise asymptotic behavior of a positive least energy solution $u_{\epsilon}$ for \eqref{ele-1.1} and the exact rates of blow-up and the location of blow-up points, it was proved that a scaled $u_{\epsilon}$, given by $\|u_{\epsilon}\|_{L^{\infty}(\Omega)}u_{\epsilon}$ converges to the Green's function $G$, solution of $-\Delta G(x,\cdot)=\delta_{x}$ in $\Omega$, $G(x,0)=0$ on $\partial\Omega$, where $\delta_{x}$ denotes the Dirac measure at $x$. The location of blowing-up points are the critical points of the Robin function $\phi(x)=H(x,x)$ ( where $\phi<0$ by the maximum principle in $\Omega$), where $H(x,y)$ is the regular part of the Green function $G(x,y)$, i.e.
\begin{equation}\label{Robin}
	H(x,y)=G(x,y)-\frac{1}{(N-2)\omega_N|x-y|^{N-2}},
\end{equation}
with $\omega_N$ is the area of the unit sphere in $\mathbb{R}^N$.		

Let $\Omega$ be a smooth bounded domain in $\mathbb{R}^N$ and $N\geq4$, Rey \cite{Rey-1989} in the case $q=1$ in \eqref{ele-1.1} (independently and using different arguments, by Han in \cite{HANZCHAO}) considered
	\begin{equation}\label{eq1.2}
		\begin{cases}
			-\Delta u
			=N(N-2)u^{p}+\epsilon u
			,~~~~~~~~~~~\text{in}~~~\Omega,\\			u=0,~~~~~~~~~~~~~~~~~~~~~~~~~~~~~~~~~~~~~\quad\quad\text{on}~~\partial\Omega,
		\end{cases}
	\end{equation}
and studied the concentration of the solutions for problem~\eqref{eq1.2}. Assume that $\lbrace u_\varepsilon\rbrace$ is a minimizing sequence for best Sobolev constant $S$, then the authors established that the exact blowing-up rate of the solutions under $L^\infty$-Norm. Rey in \cite{Rey-1990} proved if $u_{\varepsilon}$ is a solution of \eqref{eq1.2} which concentrates around
	a point $x_0$ and $u_{\varepsilon}$ satisfies
	\begin{equation}\label{S1}
		|\nabla u_\epsilon|^2\rightarrow S^{\frac{N}{2}}\delta_{x_0}\quad\text{as}\quad\epsilon\rightarrow0,
	\end{equation}
	then $x_{0}\in \Omega$ is a critical point of Robin function in $\Omega$.
Also, it is nature to study the asymptotic behavior of the subcritical
solutions for the following problem
 \begin{equation}\label{SC}
 -\Delta u
=u^{p-\epsilon}
 ,~~~u>0,~~~\text{in}~~~\Omega,~~~
 		u=0,~~~\text{on}~~~\partial\Omega,
 \end{equation}
 where $\Omega$ is a bounded domain contained in $\mathbb{R}^N$ for $N\geq3$ and $\epsilon$ is a small parameter. The studies on the asymptotic behavior of radial solutions of \eqref{SC} were initiated by Atkinson and Peletier in \cite{ATKINSON-1986} by using ODE technique in the unit ball of $\mathbb{R}^3$. Later, Brezis and Peletier \cite{BP} used the method of PDE to obtain the same results as that in \cite{ATKINSON-1986} for the spherical domains. Finally, the same kind of results hold for nonspherical domain, which was settled by Han in \cite{HANZCHAO} (independently by Rey in \cite{Rey-1989}). Moreover, this result was extended in \cite{Musso-Pistoia-2002}, where Musso and Pistoia obtained the existence of multi-peak solutions for certain domains, the elliptic systems in \cite{CKIM-1} and the references therein. (See also \cite{LiWZ,Rey-1990,JW0,HL}).

Furthermore, Grossi and Pacella in \cite{GP05} deduced the asymptotic behavior of the eigenvalues $\lambda$ for the problem
 \begin{equation}\label{SC-1}
 -\Delta v
=N(N-2)(p-\epsilon)\lambda u_{\epsilon}^{p-\epsilon}v
 ,~~~v>0,~~~\text{in}~~\Omega,~~~
 		v=0,~~~\text{on}~~\partial\Omega,
 \end{equation}
and the Morse index of $u_{\epsilon}$, as well as their result in \cite{CKL-1} was extended later to the linearized problem \eqref{SC-1} at $u_{\epsilon}$ if asymptotic forms of the solutions are written as
\begin{equation*}
u_{\epsilon}=\sum_{i=1}^m\alpha_{i\epsilon}PW[\xi_{i\epsilon},\mu_{i\epsilon}\epsilon^{1/(N-2)}]+R_{\epsilon},
\end{equation*}
 where $\alpha_{i\epsilon}\rightarrow1$, $\lambda_{i\epsilon}\rightarrow\lambda_{i0}$ and $\xi_{i\epsilon}\rightarrow \xi_{i0}$ as $\epsilon\rightarrow0$. For one-bubble solutions case,  Takahashi in \cite{T-1} analyzed the linearized Brezis-Nirenberg problem of \eqref{eq1.2}. In \cite{GG09}, Gladiali and Grossi studied the asymptotic behavior of the eigenvalues and Morse index of one-bubble solution for the linearized Gelfand problem. Later that, the corresponding problem of multi-bubble solutions was proved in \cite{GGO-1}, and further the qualitative properties of the first $m$ eigenfunctions was established by Gladiali et al. in \cite{GGO}. For the studies of the related topic, see also \cite{B-L-R,dgp,LTX} and the references therein for earlier works.

Inspired by the previous work, the aim of first part of this paper is to study qualitative properties of the behavior of eigenpairs $(\lambda_{i,\epsilon},v_{i,\epsilon})$ to eigenvalue problem
\begin{equation}\label{ele-2}
\left\lbrace
\begin{aligned}
    &-\Delta v=\lambda\big(pu_{\epsilon}^{p-1}+\epsilon qu_{\epsilon}^{q-1}\big)v \quad \hspace{1mm}\mbox{in}\quad \Omega,\\
    &v=0\quad \quad \quad \quad \quad \quad \quad \quad \quad \quad \quad \hspace{1.5mm}\mbox{on}\hspace{2mm}\partial\Omega,\\&\|v\|_{L^{\infty}(\Omega)}=1
   \end{aligned}
\right.
\end{equation}
for $i=1,\cdots,N+2$ and $1<q<p$. Firstly, we concentrate on behavior of the first eigenvalue and eigenvector. Given $i=1,\cdots,N+2$,
 let $V_{i,\epsilon}$ be a dilation of $v_{i,\epsilon}$ corresponding to $\lambda_{i,\epsilon}$ defined as
\begin{equation}\label{vie}
V_{i,\epsilon}(x)=v_{i,\epsilon}\left(\big\|u_{\varepsilon}\big\|_{L^{\infty}(\Omega)}^{-\frac{p-1}{2}}x+x_{\epsilon}\right)\quad\mbox{for}\quad x\in\Omega_{\epsilon}:=\big\|u_{\epsilon}\big\|_{L^{\infty}(\Omega)}^{\frac{p-1}{2}}(\Omega-x_{\epsilon}).
\end{equation}
\begin{thm}\label{Figalli-1}
		Assume that $N\geq3$, $p=(N+2)/(N-2)$, $q\in(\max\{1,\frac{6-N}{N-2}\},p)$ and $\epsilon$ is sufficiently small. Let $\{u_{\epsilon}\}$ be a
family of least energy solutions of \eqref{ele-2}. Then
	\begin{equation}\label{vie-1-1}	
\lambda_{1,\epsilon}\rightarrow\frac{1}{p}\quad\mbox{and}\quad V_{1,\epsilon}\rightarrow W[0,1](x)=\Big(\frac{1}{1+|x|^2/N(N-2)}\Big)^{\frac{N-2}{2}}\quad\mbox{in}\quad C_{loc}^{2}(\mathbb{R}^N).
\end{equation}
Moreover, if $\epsilon$ is small enough, the eigenvalue $\lambda_{1,\epsilon}$ is simple and
\begin{equation}\label{FINAL1}	
\lim\limits_{\epsilon\rightarrow0^{+}}\big\|u_{\epsilon}\big\|^2_{L^{\infty}(\Omega)}v_{1,\epsilon}(x)=   \int_{\mathbb{R}^N}W^{p}(x)dxG(x,x_0),
\end{equation}
where the convergence is in $C^{1,\alpha}(\omega)$ with any subdomain of $\Omega$ not containing $x_0$.
\end{thm}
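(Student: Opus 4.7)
The plan is a standard blow-up analysis adapted to the mixed-exponent setting, in the spirit of Takahashi \cite{T-1} and Gladiali-Grossi \cite{GG09}. To pin down the limit of $\lambda_{1,\epsilon}$ first, I would test \eqref{ele-2} (with $i=1$) against $u_\epsilon$, test \eqref{ele-1.1-1} against $v_{1,\epsilon}$, and subtract, obtaining the Rayleigh-type identity
\[
\int_\Omega (u_\epsilon^p+\epsilon u_\epsilon^q)\,v_{1,\epsilon}\,dx=\lambda_{1,\epsilon}\int_\Omega (p u_\epsilon^p+\epsilon q u_\epsilon^q)\,v_{1,\epsilon}\,dx.
\]
Since the Krein-Rutman eigenfunction $v_{1,\epsilon}$ is strictly positive, this traps $\lambda_{1,\epsilon}\in[1/p,1/q]$; plugging in the blow-up profile \eqref{pwconcentration-1} and using the assumption on $q$ to make the subcritical contribution of lower order than the critical one then yields $\lambda_{1,\epsilon}\to 1/p$.

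Next, set $U_\epsilon(x):=\|u_\epsilon\|_{L^\infty(\Omega)}^{-1} u_\epsilon(\|u_\epsilon\|_{L^\infty(\Omega)}^{-(p-1)/2}x+x_\epsilon)$, so that $U_\epsilon\to W[0,1]$ in $C^2_{\mathrm{loc}}(\mathbb R^N)$ by the standard local convergence of the projected bubble; the rescaling \eqref{vie} converts \eqref{ele-2} into
\[
-\Delta V_{1,\epsilon}=\lambda_{1,\epsilon}\bigl(pU_\epsilon^{p-1}+\epsilon\|u_\epsilon\|_{L^\infty(\Omega)}^{q-p}qU_\epsilon^{q-1}\bigr)V_{1,\epsilon}\quad\text{in }\Omega_\epsilon,\qquad\|V_{1,\epsilon}\|_{L^\infty(\Omega_\epsilon)}=1.
\]
Under $q>\max\{1,(6-N)/(N-2)\}$ the concentration rate in \eqref{pwconcentration-1} gives $\epsilon\|u_\epsilon\|_{L^\infty(\Omega)}^{q-p}\to 0$, and the weight concentrates at $x_\epsilon$, so the maximum of $v_{1,\epsilon}$ is attained within $O(\|u_\epsilon\|_{L^\infty(\Omega)}^{-(p-1)/2})$ of $x_\epsilon$, making the blow-up limit nontrivial. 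Elliptic $L^p$ and Schauder estimates together with Arzel\`a-Ascoli yield $V_{1,\epsilon}\to V_1$ in $C^2_{\mathrm{loc}}(\mathbb R^N)$, with $V_1\ge 0$, $V_1(0)>0$, solving $-\Delta V_1=\lim_\epsilon\lambda_{1,\epsilon}\cdot pW^{p-1}V_1$. Invoking the fact that $W$ is (up to scalar) the unique positive bounded solution of $-\Delta v=W^{p-1}v$ on $\mathbb R^N$ — a Liouville-type consequence of the nondegeneracy theory of the bubble — one identifies $\lim_\epsilon\lambda_{1,\epsilon}=1/p$ and $V_1=W[0,1]$; since the limit is unique, the full family (not only subsequences) converges.

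Simplicity of $\lambda_{1,\epsilon}$ is then a direct consequence of Krein-Rutman applied to the self-adjoint problem with strictly positive weight $pu_\epsilon^{p-1}+\epsilon qu_\epsilon^{q-1}$. For \eqref{FINAL1} I would use the Green representation
\[
v_{1,\epsilon}(x)=\lambda_{1,\epsilon}\int_\Omega G(x,y)\bigl(pu_\epsilon^{p-1}(y)+\epsilon qu_\epsilon^{q-1}(y)\bigr)v_{1,\epsilon}(y)\,dy,
\]
multiply by $\|u_\epsilon\|_{L^\infty(\Omega)}^2$, and change variables $y=\|u_\epsilon\|_{L^\infty(\Omega)}^{-(p-1)/2}z+x_\epsilon$; the critical identity $(p-1)N/2=p+1$ makes the scaling algebra cancel exactly, leaving an integral of $G(x,\,\cdot\,)$ against $pU_\epsilon^{p-1}V_{1,\epsilon}+\epsilon\|u_\epsilon\|_{L^\infty(\Omega)}^{q-p}qU_\epsilon^{q-1}V_{1,\epsilon}$ over $\Omega_\epsilon$. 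For $x$ in a compact $\omega\Subset\Omega\setminus\{x_0\}$ the kernel $G(x,\cdot)$ is smooth near $x_0$; dominated convergence using $U_\epsilon,V_{1,\epsilon}\to W$ and the integrability of $W^p$ delivers the critical-term limit $G(x,x_0)\int_{\mathbb R^N}W^p\,dz$, while the subcritical piece is disposed of by splitting $\Omega_\epsilon$ into $\{|z|<R\}$ (where the vanishing prefactor $\epsilon\|u_\epsilon\|_{L^\infty(\Omega)}^{q-p}$ kills the bounded integrand) and $\{|z|>R\}$ (where the decay $W(z)\lesssim|z|^{-(N-2)}$ combined with the Molle-Pistoia rate gives a quantitatively small tail). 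The upgrade to $C^{1,\alpha}(\omega)$ is Schauder regularity applied to the linear equation satisfied by $\|u_\epsilon\|_{L^\infty(\Omega)}^2 v_{1,\epsilon}$ on $\omega$, whose coefficient remains uniformly bounded there.

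The main obstacle I expect is the classification step in the second paragraph: one must have the Liouville/nondegeneracy statement isolating $W$ as the only bounded positive solution of the limiting linearized equation on $\mathbb R^N$, and check quantitatively that the coefficient $\epsilon\|u_\epsilon\|_{L^\infty(\Omega)}^{q-p}$ in the rescaled equation really vanishes — this is precisely where the lower bound $q>\max\{1,(6-N)/(N-2)\}$ and the sharp concentration rate \eqref{pwconcentration-1} are consumed. A secondary but nontrivial point is the subcritical tail in the Green representation when $q$ is close to the lower endpoint, where the polynomial decay of $W$ must be balanced carefully against the sharp blow-up rate of $\|u_\epsilon\|_{L^\infty(\Omega)}$.
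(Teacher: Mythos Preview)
Your proposal is correct and follows the same blow-up-and-classify scheme as the paper, with a few tactical variations worth noting. For the limit of $\lambda_{1,\epsilon}$, the paper tests the variational minimum with $v=u_\epsilon$ to obtain only $\limsup\lambda_{1,\epsilon}\le 1/p$, and then relies on the limiting equation together with the Bianchi--Egnell classification (Proposition~\ref{prondgr}) to close; your Rayleigh identity with Krein--Rutman positivity yields the two-sided trap $\lambda_{1,\epsilon}\in[1/p,1/q]$ up front, which is marginally cleaner. For simplicity, the paper argues by contradiction (two orthogonal first eigenfunctions would rescale to give $\int_{\mathbb R^N}W^{p+1}=0$), whereas your direct Krein--Rutman appeal is shorter and entirely adequate since the weight $pu_\epsilon^{p-1}+\epsilon qu_\epsilon^{q-1}$ is strictly positive. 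For \eqref{FINAL1}, the paper shows that $-\Delta(\|u_\epsilon\|_\infty^2 v_{1,\epsilon})$ converges distributionally to $\bigl(\int_{\mathbb R^N} W^p\bigr)\delta_{x_0}$ and then invokes Han's regularity lemma (Lemma~\ref{regular}) to upgrade to $C^{1,\alpha}$; your Green-representation argument is equivalent. The one ingredient you should make explicit is the nontriviality of the blow-up limit $V_1$: the paper secures this through the pointwise decay $|V_{i,\epsilon}|\lesssim W$ on $\Omega_\epsilon$ obtained by iterating the Green representation (Section~\ref{section2}), which simultaneously furnishes the dominating function for your convergence step and prevents the maximum of $V_{1,\epsilon}$ from escaping to infinity.
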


For $i=2,\cdots,N+1$, we have the following asymptotic behavior of the eigenfunctions $V_{i,\epsilon}$.
\begin{thm}\label{Figalli}
Let the assumptions of Theorem \ref{Figalli-1} be satisfied. Let $q\in(\max\{1,\frac{6-N}{N-2}\},p)$ if $N\geq3$, it holds that
\begin{equation}\label{vie-1}	
V_{i,\epsilon}\rightarrow\sum_{k=1}^{N}\frac{\alpha_k^ix_k}{(N(N-2)+|x|^2)^{\frac{N}{2}}}\quad\mbox{as}\hspace{2mm}\epsilon\rightarrow0\hspace{2mm}\mbox{in}\hspace{2mm}C_{loc}^{1}(\mathbb{R}^N),\hspace{2mm}i=2,\cdots,N+1.
\end{equation}
If the parameters $N$ and $q$ are chosen in the following range
\begin{equation}\label{qN}
\left\lbrace
\begin{aligned}
    &1<q<p\hspace{2.7mm}\mbox{if}\hspace{2mm}N\geq5,\\
    &\frac{3}{2}<q<p \hspace{2mm}\mbox{if}\hspace{2mm} N=4,\\
    &3<q<5\hspace{2.7mm}\mbox{if}\hspace{2mm}N=3,
   \end{aligned}
\right.
\end{equation}
it holds that
\begin{equation}\label{vie-2}	
\big\|u_{\epsilon}\big\|_{\infty}^{\frac{p+3}{2}}v_{i,\epsilon}\rightarrow\frac{a_N}{N-2}\omega_N\sum_{k=1}^{N}\alpha_{k}^{i}\left(\frac{G\left(x,y\right)}{\partial y_k}\right)\big|_{y=x_0}
\quad\mbox{as}\hspace{2mm}\epsilon\rightarrow0\hspace{2mm}\mbox{in}\hspace{2mm}C_{loc}^{1}(\overline{\Omega}\setminus\{x_0\}),
\end{equation}	
for $i=2,\cdots,N+1$. Here the vectors  $\alpha^{i}=(\alpha_1^{i},\cdots,\alpha_{N}^i)\neq0$ in $\mathbb{R}^N$ and some suitable constant $a_N$ and satisfying
$$a_N=\int_{0}^{\infty}\frac{r^{N-1}dr}{(N(N-2)+r^2)^{\frac{N+2}{2}}}.
$$
\end{thm}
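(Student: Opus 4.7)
My plan is to combine a blow-up analysis at the concentration point $x_0$ with a Green's-function integral representation in the outer region. The inner analysis follows the pattern developed for the pure critical problem in \cite{T-1,GG09,GGO}, while the subcritical perturbation $\epsilon u^{q}$ introduces extra remainders whose control is exactly what produces the range condition \eqref{qN}.

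\emph{Inner limit.} Relying on Theorem \ref{Figalli-1}, which in particular supplies the concentration rate and the local profile, I would first show $\lambda_{i,\epsilon}\to 1$ for $i=2,\dots,N+1$ by a min--max argument using the $(N+1)$-dimensional kernel of the linearised critical bubble equation as competitors. After the scaling \eqref{vie}, $V_{i,\epsilon}$ satisfies an equation whose coefficient converges locally uniformly to $pW[0,1]^{p-1}$ (the $\epsilon u^{q-1}$ piece being of lower order since $p>q$ and $\|u_\epsilon\|_{L^{\infty}}\to\infty$). Using $\|V_{i,\epsilon}\|_{L^\infty}\le 1$ and standard elliptic estimates, one gets $V_{i,\epsilon}\to V_{i,0}$ in $C^{1}_{loc}(\mathbb{R}^N)$ along a subsequence, with $V_{i,0}$ bounded and solving $-\Delta V_{i,0}=pW[0,1]^{p-1}V_{i,0}$ on $\mathbb{R}^N$.

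\emph{Classification and orthogonality.} By the nondegeneracy of the Aubin--Talenti bubble (Rey \cite{Rey-1990}), $V_{i,0}$ lies in the $(N+1)$-dimensional span of the scaling mode $(N(N-2)-|x|^2)/(N(N-2)+|x|^2)^{N/2}$ and the $N$ translation modes $x_k/(N(N-2)+|x|^2)^{N/2}$. Passing the weighted $L^2$-orthogonality of $v_{i,\epsilon}$ and $v_{j,\epsilon}$ to the limit yields orthogonality of the $V_{i,0}$'s in $L^2(\mathbb{R}^N,W^{p-1}dx)$. A separation of the higher-order expansion of $\lambda_{i,\epsilon}$ (pushing the scaling mode to $i=N+2$, which is treated separately) then leaves for each $i=2,\dots,N+1$ a nontrivial translation direction, producing the vectors $\alpha^i\neq 0$ and \eqref{vie-1}.

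\emph{Outer profile and the role of \eqref{qN}.} For \eqref{vie-2}, I would start from the Green representation
\[
v_{i,\epsilon}(x)=\lambda_{i,\epsilon}\int_{\Omega}G(x,y)\bigl[pu_\epsilon^{p-1}(y)+\epsilon qu_\epsilon^{q-1}(y)\bigr]v_{i,\epsilon}(y)\,dy,
\]
change variables $y=x_\epsilon+\|u_\epsilon\|_{L^{\infty}}^{-(p-1)/2}z$, and Taylor expand $G(x,y)$ in $y$ at $x_\epsilon$. The zeroth-order term is proportional to $\int W^{p-1}V_{i,0}\,dz$ and vanishes by the oddness of the limit obtained in the inner analysis; the linear term produces the right-hand side of \eqref{vie-2} once one evaluates $\int z_k^2\,W^{p-1}/(N(N-2)+|z|^2)^{N/2}\,dz$ in terms of $a_N$, and the overall exponent $(p+3)/2=2+(p-1)/2$ reflects the combination of the outer factor $\|u_\epsilon\|_{L^\infty}^{-2}$ with the Taylor factor $\|u_\epsilon\|_{L^\infty}^{-(p-1)/2}$. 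The main obstacle is to verify that the $\epsilon qu_\epsilon^{q-1}v_{i,\epsilon}$ contribution, which after rescaling is of order $\epsilon\,\|u_\epsilon\|_{L^\infty}^{q-1-N(p-1)/2}$, stays strictly smaller than the Green-gradient leading order; inserting the concentration rate $\mu_\epsilon^{-1}\sim\epsilon^{2/(N-6+q(N-2))}$ from \eqref{pwconcentration-1} converts this into the numerical condition \eqref{qN}. Upgrading from pointwise to $C^1_{loc}(\overline{\Omega}\setminus\{x_0\})$ convergence is then routine via Schauder estimates applied away from the blow-up point.
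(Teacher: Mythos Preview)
Your overall architecture (inner blow-up to the kernel of the linearised bubble, then Green representation for the outer expansion) matches the paper, and the inner analysis is fine. There are, however, two genuine gaps.

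\emph{The vanishing of the scaling coefficient $\beta^i$.} You write that ``a separation of the higher-order expansion of $\lambda_{i,\epsilon}$'' pushes the scaling mode to $i=N+2$, but this is precisely the hard step and you have not supplied an argument. Orthogonality of $V_{2,0},\dots,V_{N+1,0}$ only gives $N$ orthogonal vectors in an $(N+1)$-dimensional kernel; it does not by itself forbid the scaling mode from appearing among them. The paper's mechanism is a Pohozaev-type identity obtained by testing the equation for $v_{i,\epsilon}$ against $(x-x_\epsilon)\cdot\nabla u_\epsilon+\tfrac{2}{p-1}u_\epsilon$ (Corollary~\ref{conseq}): combined with the outer convergence $\|u_\epsilon\|_\infty^2 v_{i,\epsilon}\to c\,\beta^i G(\cdot,x_0)$ (Lemma~\ref{m00}) and the relation \eqref{Fn} for $\epsilon\|u_\epsilon\|_\infty^{q+2-p}$, it forces, under the assumption $\beta^i\neq0$, an estimate $1-\lambda_{i,\epsilon}=\mathcal{C}_{p,q}\|u_\epsilon\|_\infty^{-2}(1+o(1))$ with a strictly negative constant $\mathcal{C}_{p,q}$ (this sign computation is done case-by-case in $q$ and $N$). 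This contradicts the sharper min--max bound $|1-\lambda_{i,\epsilon}|\le M_0\|u_\epsilon\|_\infty^{-(p+1)}$ (Lemma~\ref{baowenbei}), since $p+1>2$. Your proposal does not contain this identity or the rate comparison, and they are what make \eqref{vie-1} work.

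\emph{The subcritical remainder in the outer expansion.} Your stated order $\epsilon\,\|u_\epsilon\|_\infty^{q-1-N(p-1)/2}$ for the $q$-contribution tacitly assumes that $\int_{\Omega_\epsilon}U_\epsilon^{q-1}V_{i,\epsilon}G(\cdot)\,dz=O(1)$. But $U_\epsilon^{q-1}V_{i,\epsilon}=O(|z|^{-(N-2)q})$, which is not in $L^1(\mathbb{R}^N)$ when $q\le N/(N-2)$; for $N\ge5$ this range is nonempty inside \eqref{qN}, so your Taylor-expansion bookkeeping breaks down there. The paper recovers the missing decay by an integration-by-parts trick: writing $U_\epsilon^{q-1}V_{i,\epsilon}=\sum_k\alpha_k^i\partial_{y_k}w_\epsilon$ via the first-order PDE lemma (Lemma~\ref{Ve}) shifts the derivative onto $G$ and produces the $\partial_{y_k}G$ structure directly while improving integrability; the resulting bound is then controlled precisely when $q>3/(N-2)$, which is what generates the thresholds in \eqref{qN} (for $N=4$ this is $q>3/2$, explaining Remark~\ref{qdefan}). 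Without this device your argument only covers $q>N/(N-2)$, i.e.\ Case~1 of Lemma~\ref{6.3lemma}, and does not reach the full range claimed in the theorem.
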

\begin{Rem}\label{qdefan}
Since $q>1$ exerts a significant influence in study of problem \eqref{ele-2}, especially the obvious differences observed from the comparison between $q$ and the exponent $\frac{N}{N-2}$, we have not obtained conclusion \eqref{vie-2} in the case $1<q\leq\frac{3}{2}$ if $N=4$.
\end{Rem}
	For a further understanding of eigenvalue $\lambda_{i,\epsilon}$, we establish a connection between the eigenvalues $\lambda_{i,\epsilon}$ and the eigenvalues of the Hessian matrix of the Robin function $\phi(x)$ for any $i=2,\cdots,N+1$. Our result can be stated as follows.
	\begin{thm}\label{remainder terms}
		Let the assumptions of Theorem \ref{Figalli-1} be satisfied. Assume that $\nu_1\leq\nu_2\leq\cdots\leq\nu_N$ the eigenvalues of the Hassian matrix $D^2\phi(x_0)$ of the Robin function at $x_0$.
		Then there exist some constant $\mathcal{M}>0$ such that for $\epsilon$ small, it holds that
\begin{equation*}
(\lambda_{i,\epsilon}-1)\big\|u_{\epsilon}\big\|_{\infty}^{p+1}\rightarrow \mathcal{M}\nu_{i-1} \quad\mbox{for any}\hspace{2mm}i=2,\cdots,N+1,
		\end{equation*}
with $$\mathcal{M}:=\frac{(N(N-2))^{\frac{N}{2}}a_{N}\omega_N^2}{p\displaystyle\int_{\mathbb{R}^N}W^{p-1}[0,1]|\nabla W^2[0,1]|dx}>0.$$
Moreover, the vectors $\alpha^i$ of \eqref{vie-1} are the eigenvectors corresponding to $\nu_i$ for $i=2,\cdots,N+1.$
	\end{thm}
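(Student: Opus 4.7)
The plan is to derive a Pohozaev-type balance linking the rescaled eigenvalue deviation $(\lambda_{i,\epsilon}-1)\|u_\epsilon\|_\infty^{p+1}$ to a boundary integral of Green's function derivatives, then identify that boundary integral with a multiple of the Hessian of the Robin function acting on the vector $\alpha^i$. Differentiating \eqref{ele-1.1} in $x_k$ (with $\kappa\equiv 1$) gives $-\Delta(\partial_{x_k}u_\epsilon)=(pu_\epsilon^{p-1}+\epsilon q u_\epsilon^{q-1})\partial_{x_k}u_\epsilon$. Applying Green's second identity to $v_{i,\epsilon}$ and $\partial_{x_k}u_\epsilon$, together with $u_\epsilon=v_{i,\epsilon}=0$ on $\partial\Omega$ (so $\partial_{x_k}u_\epsilon|_{\partial\Omega}=\nu_k\partial_\nu u_\epsilon$), yields
\begin{equation*}
(\lambda_{i,\epsilon}-1)\int_{\Omega}\bigl(pu_\epsilon^{p-1}+\epsilon q u_\epsilon^{q-1}\bigr)v_{i,\epsilon}\,\partial_{x_k}u_\epsilon\,dx=-\int_{\partial\Omega}\nu_k\,\partial_\nu u_\epsilon\,\partial_\nu v_{i,\epsilon}\,d\sigma,\qquad k=1,\dots,N.
\end{equation*}

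Next I would expand both sides at their respective scales. For the left-hand side, rescale $y=\mu_\epsilon(x-x_\epsilon)$ with $\mu_\epsilon=\|u_\epsilon\|_\infty^{(p-1)/2}$, insert the inner limits $\tilde U_\epsilon\to W[0,1]$ and $V_{i,\epsilon}\to\sum_j\alpha_j^i y_j/(N(N-2)+|y|^2)^{N/2}$ from Theorems \ref{Figalli-1}--\ref{Figalli}, and use $\tilde U_\epsilon^{p-1}\partial_{y_k}\tilde U_\epsilon=\tfrac{1}{p}\partial_{y_k}\tilde U_\epsilon^p$ followed by integration by parts. Radial symmetry of $W[0,1]$ collapses the cross terms, and the standing range of $q$ makes the $\epsilon q u_\epsilon^{q-1}$ contribution strictly lower order, leaving
\begin{equation*}
\text{LHS}=\mu_\epsilon^{-(N-4)/2}\,(\lambda_{i,\epsilon}-1)\,\alpha_k^i\,\mathcal{A}\,(1+o(1))
\end{equation*}
with $\mathcal{A}$ a positive multiple of $\int_{\mathbb{R}^N}W^{p-1}[0,1]\,|\nabla W^2[0,1]|\,dy$. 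For the right-hand side, combine the standard blow-up asymptotic $\|u_\epsilon\|_\infty u_\epsilon\to\int_{\mathbb{R}^N}W^p\,dy\cdot G(\cdot,x_0)$ away from $x_0$ (proved in parallel with \eqref{FINAL1}) with the outer formula \eqref{vie-2} for $v_{i,\epsilon}$; evaluating normal derivatives on $\partial\Omega$ gives
\begin{equation*}
-\int_{\partial\Omega}\nu_k\,\partial_\nu u_\epsilon\,\partial_\nu v_{i,\epsilon}\,d\sigma=\mu_\epsilon^{-(3N-4)/2}\,\mathcal{C}\sum_{j=1}^N\alpha_j^i\,\mathcal{B}_{jk}\,(1+o(1)),
\end{equation*}
where $\mathcal{B}_{jk}:=\int_{\partial\Omega}\nu_k\,\partial_\nu G(x,x_0)\,\partial_\nu[\partial_{y_j}G(x,y)]_{y=x_0}\,d\sigma$.

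The conceptual crux and main anticipated obstacle is the Green's function identity $\mathcal{B}_{jk}=c_N'\,\partial_{x_j}\partial_{x_k}\phi(x_0)$. I would prove it by a Pohozaev-type manipulation on the pair $\bigl(G(\cdot,x_0),\,\partial_{y_j}G(\cdot,y)|_{y=x_0}\bigr)$, using the splitting $G=\Gamma+H$ with $\Gamma(x-y)=1/((N-2)\omega_N|x-y|^{N-2})$, the harmonicity of $H$ in each variable, and careful cancellation of the dipole singularities at $x_0$; the fact that $x_0$ is a critical point of $\phi$ eliminates unwanted first-order remainders. Substituting this identity into the expansions above and using $\|u_\epsilon\|_\infty^{p+1}=\mu_\epsilon^N=\mu_\epsilon^{(3N-4)/2-(N-4)/2}$ converts the Pohozaev balance into
\begin{equation*}
(\lambda_{i,\epsilon}-1)\,\|u_\epsilon\|_\infty^{p+1}\,\alpha_k^i=\mathcal{M}\sum_{j=1}^N\partial_{x_j}\partial_{x_k}\phi(x_0)\,\alpha_j^i+o(1),\qquad k=1,\dots,N.
\end{equation*}
Since $\alpha^i\neq 0$, this forces $\alpha^i$ to be an eigenvector of $D^2\phi(x_0)$ and $(\lambda_{i,\epsilon}-1)\|u_\epsilon\|_\infty^{p+1}$ to converge to $\mathcal{M}$ times the corresponding Hessian eigenvalue; the ordering $\lambda_{2,\epsilon}\leq\cdots\leq\lambda_{N+1,\epsilon}$ matches $\nu_1\leq\cdots\leq\nu_N$, giving $\mathcal{M}\nu_{i-1}$, while bookkeeping of every prefactor through the rescaling reproduces the stated formula for $\mathcal{M}$.
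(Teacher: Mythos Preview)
Your approach is essentially identical to the paper's: the paper uses exactly the identity you derive (their Lemma~\ref{qiegao}), evaluates the boundary integral via the outer convergences \eqref{ifini} and \eqref{vie-2} together with the Green's function identity \eqref{GX0} (which they quote rather than prove), evaluates the volume integral by the inner rescaling, and finishes with the same eigenvector/ordering argument. The only small omission in your sketch is that ``ordering matches'' is not by itself enough to conclude the $\alpha^i$ exhaust all $N$ Hessian eigenvalues; the paper closes this with a one-line lemma (Lemma~\ref{wanfan}) showing $(\alpha^k,\alpha^l)=0$ from the $H^1_0$-orthogonality of the $v_{i,\epsilon}$.
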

Now we have the folowing the qualitative properties of the nodal regions of the eigenfunctions.
\begin{thm}\label{eigenfunctions-1}
Let the assumptions of Theorem \ref{Figalli-1} be satisfied. Define
$N_{i,\epsilon}=\{x\in\Omega:~v_{i,\epsilon}(x)=0\}.$
Then
\begin{itemize}
\item[$(a)$]
$N_{i,\epsilon}\cap\partial\Omega=\emptyset$ if $\Omega$ be convex, for $i=2,\cdots,N+1$.
\item[$(b)$] the eigenfunctions $v_{i,\epsilon}(x)$ have only two nodal regions for $i=2,\cdots,N+1.$
\end{itemize}
    \end{thm}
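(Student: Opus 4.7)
The plan is to prove part (b) first, then use the two-nodal-region structure together with the convexity of $\Omega$ and the asymptotic \eqref{vie-2} to establish part (a).

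For part (b), I would follow the standard blow-up scheme (cf.\ Grossi--Pacella, Gladiali--Grossi--Ohno) in two steps. Step~1 (inner picture): by Theorem \ref{Figalli}, the rescaled eigenfunction $V_{i,\epsilon}$ converges in $C^1_{\rm loc}(\mathbb{R}^N)$ to $\Phi(x):=\sum_{k=1}^N\alpha_k^i x_k/(N(N-2)+|x|^2)^{N/2}$, whose nodal set is the hyperplane $\{\alpha^i\cdot x=0\}$ and which therefore has exactly two nodal regions. Uniform $C^1$ convergence on any fixed $B_R(0)$ transfers this structure to $V_{i,\epsilon}$ for $\epsilon$ small. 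Step~2 (outer picture): no nodal region of $v_{i,\epsilon}$ can lie entirely in $\Omega\setminus B_\delta(x_0)$ for a fixed small $\delta>0$. Indeed, if $\Omega^*\subset\Omega\setminus B_\delta(x_0)$ were such a region, then $v_{i,\epsilon}|_{\Omega^*}$ would be a sign-definite first eigenfunction of $-\Delta-\lambda_{i,\epsilon}V_\epsilon$ on $\Omega^*$ with eigenvalue zero, where $V_\epsilon:=pu_\epsilon^{p-1}+\epsilon qu_\epsilon^{q-1}$, and Rayleigh's characterization would yield
\begin{equation*}
0<\mu_1(\Omega)\leq\mu_1(\Omega^*)\leq\lambda_{i,\epsilon}\sup_{\Omega^*}V_\epsilon.
\end{equation*}
Since $u_\epsilon\to 0$ uniformly on $\Omega\setminus B_\delta(x_0)$ by the bubble concentration and $\lambda_{i,\epsilon}\to 1$ by Theorem \ref{remainder terms}, the right-hand side tends to $0$, a contradiction. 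Because the rescaled image of $B_\delta(x_0)$ eventually contains any $B_R(0)$, combining the two steps forces exactly two nodal regions.

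For part (a), I would proceed by contradiction. Assume along a subsequence $\epsilon_n\to 0$ there exist interior nodal points $y_n\in N_{i,\epsilon_n}$ with $y_n\to y_0\in\partial\Omega$. By (b) the nodal set is the common boundary of the two sign regions $\Omega^\pm_n$, and Hopf's lemma applied to $v_{i,\epsilon_n}^\pm$ in each forces $\partial_\nu v_{i,\epsilon_n}(y_n)\to 0$. The $C^1_{\rm loc}(\overline{\Omega}\setminus\{x_0\})$ convergence in \eqref{vie-2} then gives $\partial_\nu\Psi(y_0)=0$, where $\Psi(x):=\sum_{k=1}^N\alpha_k^i\partial_{y_k}G(x,x_0)$ is the dipole-type harmonic function vanishing on $\partial\Omega$ with singularity at $x_0$. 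In Poisson-kernel form, $\partial_\nu\Psi(y_0)=-\alpha^i\cdot\nabla_y P(y_0,y)|_{y=x_0}$, where $\alpha^i$ is the Hessian eigenvector pinned down by Theorem \ref{remainder terms}.

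The main obstacle is to rule out this tangential vanishing for convex $\Omega$. The plan is to use the convexity to reflect a thin slab of $\Omega$ near $y_0$ across a supporting hyperplane at $y_0$, producing an odd extension of $v_{i,\epsilon_n}$ which still satisfies the linearized equation on the reflected set; the two-region structure from (b) together with the strong maximum principle and unique continuation then forces $v_{i,\epsilon_n}\equiv 0$ in the reflected slab, contradicting the non-trivial identification of $\alpha^i$ as a principal Hessian direction of the Robin function. Coordinating the geometric reflection (enabled precisely by convexity), the algebraic structure of $\alpha^i$, and the dipole asymptotics of $\Psi$ is the technical heart of the argument.
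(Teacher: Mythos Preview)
Your argument for part~(b) is correct and is essentially the same two–scale scheme the paper invokes via \cite{GP05}: the inner $C^1_{\rm loc}$ convergence of $V_{i,\epsilon}$ to the linear profile (Lemma~\ref{1--identity} in the paper) forces the sign sets in any rescaled ball to be connected, and your first–eigenvalue comparison rules out nodal components living entirely away from the blow–up point.

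Your strategy for part~(a), however, departs from the paper and carries two genuine gaps.

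\textbf{First gap.} You use the outer asymptotic \eqref{vie-2} to pass from $\partial_\nu v_{i,\epsilon_n}(y_n)\to 0$ to $\partial_\nu\Psi(y_0)=0$. But \eqref{vie-2} is established in Theorem~\ref{Figalli} only under the restricted range \eqref{qN}, whereas Theorem~\ref{eigenfunctions-1} is stated under the weaker hypotheses of Theorem~\ref{Figalli-1}. So your proof of (a) would not cover, e.g., $N=4$ with $1<q\le 3/2$.

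\textbf{Second gap.} The reflection step you sketch to rule out $\partial_\nu\Psi(y_0)=0$ does not work as written. Reflecting $v_{i,\epsilon_n}$ oddly across a supporting hyperplane produces a function that satisfies the linearized equation on the reflected slab only if the potential $V_\epsilon=pu_\epsilon^{p-1}+\epsilon q u_\epsilon^{q-1}$ is invariant under that reflection. Convexity of $\Omega$ gives no such symmetry of $u_\epsilon$; moving--plane merely gives the inequality $u_\epsilon(x)\le u_\epsilon(x')$ in the cap, not equality. Hence the odd extension is not a solution, and neither the maximum principle nor unique continuation applies to it. The conclusion that ``$v_{i,\epsilon_n}\equiv 0$ in the reflected slab'' is therefore unjustified, and the link you draw to the Hessian eigenvector $\alpha^i$ does not close the argument.

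The paper's route (deferring to \cite{GP05}) avoids both issues by working at fixed small $\epsilon$ and never invoking \eqref{vie-2}. For a convex domain, Gidas--Ni--Nirenberg moving planes yield, for every boundary point $P$ and inward direction $e$, a cap $\Sigma$ in which $\partial u_\epsilon/\partial e$ has a strict sign; this derivative solves the very same linearized equation $-\Delta w=V_\epsilon w$. One then compares $v_{i,\epsilon}$ with $\partial u_\epsilon/\partial e$ on the intersection of a nodal domain with such a cap and reaches a contradiction via the maximum principle if the nodal line were to reach $\partial\Omega$. Convexity enters only to guarantee the cap structure for the moving--plane monotonicity of $u_\epsilon$, not to make any reflection of $v_{i,\epsilon}$ admissible.
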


When $\Omega$ is a domain convex in the $x_j$ direction and symmetric to hyperplanes $\mathcal{T}_{j}=\{x\in\mathbb{R}^N,~x_j=0\}$ for $j=1,\cdots,N$.
We will show the following:
\begin{thm}\label{eigenfunctio1}
Let
$$\mathscr{D}_{j}^{-}=\big\{x\in\Omega:~x_{j}(x)<0,~j=1,\cdots N\big\}.$$
Then for $\epsilon$ sufficiently small each eigenvalue $\lambda_{i,\epsilon}$ is the first eigenvalue $\mu_{j,\epsilon}^{(1)}$ of the eigenvalue problem
\begin{equation}\label{el}
\left\lbrace
\begin{aligned}
    &-\Delta v=\mu\big(pu_{\epsilon}^{p-1}+\epsilon qu_{\epsilon}^{q-1}\big)v \quad \hspace{1mm}\mbox{in}\quad \mathscr{D}_{j}^{-},\\
    &v=0\quad \quad \quad \quad \quad \quad \quad \quad \quad \quad \quad \hspace{2mm}\mbox{on}\hspace{2mm}\partial \mathscr{D}_j^{-},
   \end{aligned}
\right.
\end{equation}
for $i=1,\cdots,N+2$. Moreover the eigenfunction $v_{i,\epsilon}$ corresponding to $\mu_{i,\epsilon}$ is odd in the $x_j$-variable and even with respect to the other variables.
  \end{thm}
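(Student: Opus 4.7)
The plan is to exploit the reflection symmetry of $\Omega$ to split the eigenvalue problem \eqref{ele-2} into isotypic sectors under the group $G=\langle \sigma_1,\dots,\sigma_N\rangle\cong \mathbb{Z}_2^N$ generated by reflections $\sigma_j$ across $\mathcal{T}_j$, and then to match, via a one--to--one labeling $i\leftrightarrow j(i)$ between $\{2,\dots,N+1\}$ and $\{1,\dots,N\}$, each eigenfunction $v_{i,\epsilon}$ with (the odd extension of) the ground state of \eqref{el} on the half--domain $\mathscr{D}_{j(i)}^-$.

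First I would show that $u_\epsilon$ itself is $G$--invariant and that the blow-up point is the origin. Since $\Omega$ is convex in each direction $x_j$ and symmetric across $\mathcal{T}_j$, the classical Gidas--Ni--Nirenberg moving--plane argument applies to the positive solution $u_\epsilon$ of the autonomous problem \eqref{ele-1.1-1} with $\kappa\equiv 1$ and yields $u_\epsilon\circ \sigma_j=u_\epsilon$ for every $j$. Consequently, the concentration point $x_\epsilon$ is a fixed point of $G$, whence $x_\epsilon\to 0$ and $0$ is a critical point of the Robin function $\phi$; by symmetry $D^2\phi(0)$ is diagonal in the standard basis, with eigenvalues $\nu_1,\dots,\nu_N$ assigned to the axes $e_1,\dots,e_N$ after a permutation.

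Second, I would use the $G$--invariance of the weight $c_\epsilon:=pu_\epsilon^{p-1}+\epsilon q u_\epsilon^{q-1}$ to decompose the eigenspace of each $\lambda_{i,\epsilon}$ into $G$--isotypic components: if $\chi$ is any character of $G$, the projection $\Pi_\chi v=2^{-N}\sum_{g\in G}\chi(g)\,v\circ g$ maps eigenfunctions to eigenfunctions with prescribed parity under every $\sigma_j$. Combining this with Theorem \ref{Figalli} and Theorem \ref{remainder terms}, the rescaled limit of $V_{i,\epsilon}$ for $i\in\{2,\dots,N+1\}$ is $\sum_k \alpha_k^i x_k/(N(N-2)+|x|^2)^{N/2}$, where $\alpha^i$ is an eigenvector of $D^2\phi(0)$. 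Because this Hessian is diagonal in the standard basis, one can choose $\alpha^i=c_i e_{j(i)}$ for a bijection $i\mapsto j(i)$, so that the limit is odd in $x_{j(i)}$ and even in the remaining variables. A blow-up rescaling of the non-matching projections $\Pi_\chi v_{i,\epsilon}/\|\Pi_\chi v_{i,\epsilon}\|_{L^\infty}$ (for characters $\chi$ incompatible with this parity) then produces a limiting eigenfunction of the linearized problem at the bubble whose profile belongs to a symmetry class outside the classification of Theorems \ref{Figalli-1} and \ref{Figalli}, forcing $\Pi_\chi v_{i,\epsilon}\equiv 0$ for small $\epsilon$. Hence $v_{i,\epsilon}$ is purely odd in $x_{j(i)}$ and even in the other variables.

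Finally, because $v_{i,\epsilon}$ is odd in $x_{j(i)}$ it vanishes on $\mathcal{T}_{j(i)}\cap\Omega$, so its restriction to $\mathscr{D}_{j(i)}^-$ lies in $H_0^1(\mathscr{D}_{j(i)}^-)$ and solves \eqref{el} with the same eigenvalue $\lambda_{i,\epsilon}$. To identify this as the \emph{first} eigenvalue $\mu_{j(i),\epsilon}^{(1)}$, I would invoke Theorem \ref{eigenfunctions-1}(b): since $v_{i,\epsilon}$ has exactly two nodal domains on $\Omega$, these must be $\mathscr{D}_{j(i)}^-$ and its reflection, so the restriction has constant sign on $\mathscr{D}_{j(i)}^-$. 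A positive eigenfunction is necessarily a ground state (Courant nodal-domain theorem, or the fact that the resolvent of $-\Delta -\mu c_\epsilon$ is positivity-improving for $\mu$ below the principal eigenvalue), yielding $\lambda_{i,\epsilon}=\mu_{j(i),\epsilon}^{(1)}$.

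The main obstacle is the rigidity step killing the non-matching isotypic components of $v_{i,\epsilon}$, which is delicate when $D^2\phi(0)$ has repeated eigenvalues: then the corresponding eigenspace of the linearized problem on $\Omega$ need not be one-dimensional, and one must choose the basis $\{v_{i,\epsilon}\}$ within each multiplicity block so that the limiting vectors $\alpha^i$ are aligned with the standard basis. Managing the simplicity versus multiplicity bookkeeping while maintaining the $C_{\mathrm{loc}}^1$ control used in Theorem \ref{Figalli} is the technical core of the argument.
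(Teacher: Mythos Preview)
Your argument is essentially correct but considerably more elaborate than the paper's, and the ``main obstacle'' you flag is an artifact of your route rather than an intrinsic difficulty.

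The paper proceeds as follows. Since the weight $pu_\epsilon^{p-1}+\epsilon q u_\epsilon^{q-1}$ is $G$-invariant (by Gidas--Ni--Nirenberg), every isotypic projection of an eigenfunction is again an eigenfunction with the same eigenvalue; hence one may simply \emph{choose} the orthogonal basis $\{v_{i,\epsilon}\}$ so that each $v_{i,\epsilon}$ has a definite parity in every coordinate. With this choice, the limit \eqref{vie-1} from Theorem \ref{Figalli} immediately rules out the ``even in all variables'' sector (the limit $\sum_k\alpha_k^i x_k/(N(N-2)+|x|^2)^{N/2}$ is odd in at least one direction since $\alpha^i\neq 0$), so $v_{i,\epsilon}$ is odd in some $x_j$. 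Theorem \ref{eigenfunctions-1}(b) then forces exactly two nodal domains, which must be $\mathscr{D}_j^-$ and its reflection; in particular $v_{i,\epsilon}$ cannot be odd in a second variable (that would give at least four nodal domains), so it is even in the remaining ones, and its restriction to $\mathscr{D}_j^-$ is a signed eigenfunction, hence the ground state.

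The key difference is that the paper never invokes Theorem \ref{remainder terms} or the diagonal structure of $D^2\phi(0)$ to determine the parity of $v_{i,\epsilon}$: the parity is \emph{imposed} by the choice of basis, and the blow-up limit is used only to exclude the trivial (all-even) sector. This sidesteps entirely the multiplicity bookkeeping you identify as the technical core---when $D^2\phi(0)$ has repeated eigenvalues your alignment $\alpha^i=c_i e_{j(i)}$ is not automatic, and your rigidity step (killing a projection whose rescaled limit is $c\,x_k/(N(N-2)+|x|^2)^{N/2}$ with $k\neq j(i)$) only yields $\nu_k=\nu_{j(i)}$, which is no contradiction. Your approach does buy an explicit labeling $i\leftrightarrow j(i)$ tied to the Hessian eigenstructure, but the theorem as stated does not ask for this, and the paper's argument is both shorter and uniform in the degenerate case.
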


It is natural to investigate the $(N+2)$-th eigenvalue of \eqref{ele-2} and some qualitative properties of the corresponding eigenfunction. Then we establish the following results.
	\begin{thm}\label{thmprtb}
		Let the assumptions of Theorem \ref{Figalli-1} be satisfied.
		Then we have
		\begin{equation}\label{rtbdy46}	
V_{N+2,\epsilon}\rightarrow \beta \frac{(N(N-2)-|x|^2}{(N(N-2)+|x|^2)^{\frac{N}{2}}}\quad\mbox{as}\hspace{2mm}\epsilon\rightarrow0\hspace{2mm}\mbox{in}\hspace{2mm}C_{loc}^{1}(\overline{\Omega}\setminus\{x_0\})
\end{equation}
with $\beta\neq0$ and there exist a constant $\mathcal{C}_{p,q}$ such that
	\begin{equation}\label{rt}	
1-\lambda_{N+2,\epsilon}=\frac{1}{\|u_{\epsilon}\|_{L^{\infty}(\Omega)}^2}(\mathcal{C}_{p,q}+o(1))
\end{equation}
with
$$
\mathcal{C}_{p,q}=\frac{(N-2)R_N|\phi(x_0)|+\frac{2(p-q)}{p-1}B_{p,q}|\phi(x_0)|\mathcal{E}_N}{\mathcal{F}_N}<0,$$
where $\mathcal{F}_N=\frac{\pi^{\frac{N}{2}}}{2^{N+1}}$, $\mathcal{E}_{N}$ is a constant (see section \ref{section6}), $B_{p,q}$ and $R_N$ are defined as follows
$$B_{p,q}:=\frac{2(q+1)}{p-q+1}\frac{(N(N-2))^{\frac{N}{2}}\omega_N}{N^2}\frac{\Gamma(\frac{(N-2)(q+1)}{2})}{\Gamma(\frac{N}{2})\Gamma(\frac{(N-2)(q+1)-N}{2})},$$
$$R_N:=\frac{p\omega_N^2}{2N}(N(N-2))^{\frac{N}{2}}\Big(\frac{\Gamma(\frac{N}{2})\Gamma(2)-\Gamma(\frac{N}{2}+1)\Gamma(1)}{\Gamma(N/2+2)}\Big).$$
\end{thm}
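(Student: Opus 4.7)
The proof splits into two parts: identifying the $C^1_{loc}$-limit of $V_{N+2,\epsilon}$ together with $\beta\neq 0$, and then establishing the sharp expansion \eqref{rt}. For the first part, I would begin by showing $\lambda_{N+2,\epsilon}\to 1$ via a min-max comparison with the limit operator $L:=-\Delta-pW^{p-1}[0,1]$: its eigenvalue $1/p$ is simple (Theorem \ref{Figalli-1}) and its eigenvalue $1$ has multiplicity exactly $N+1$ by Bianchi-Egnell nondegeneracy, so inserting the already-known bubble eigenfunctions from Theorems \ref{Figalli-1}--\ref{Figalli} into the Rayleigh quotient forces $\lambda_{N+2,\epsilon}\to 1$. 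Applying \eqref{vie} with $i=N+2$ and the normalization $\|v_{N+2,\epsilon}\|_{L^\infty}=1$, the rescaled $V_{N+2,\epsilon}$ solves a linear equation whose coefficient converges locally to $pW^{p-1}[0,1]$, so standard elliptic estimates yield a subsequential $C^1_{loc}$-limit $V_\star$ in $\ker L=\mathrm{span}\{\partial_{x_1}W,\ldots,\partial_{x_N}W,Z_0\}$, where the dilation mode $Z_0=\tfrac{N-2}{2}W+x\cdot\nabla W$ is, up to a multiplicative constant, exactly $\tfrac{N(N-2)-|x|^2}{(N(N-2)+|x|^2)^{N/2}}$. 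The weighted $L^2$-orthogonality of $v_{N+2,\epsilon}$ with $v_{i,\epsilon}$, $i=2,\ldots,N+1$, passes to the limit via Theorem \ref{Figalli} and eliminates the translation components, leaving $V_\star=\beta Z_0$.

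To rule out $\beta=0$ and to upgrade the convergence to \eqref{rtbdy46} on $\overline{\Omega}\setminus\{x_0\}$, I would use the Green representation of $v_{N+2,\epsilon}$ as in the derivation of \eqref{FINAL1}. If $\beta=0$, the bubble contribution to that representation vanishes at a higher order, forcing $\|v_{N+2,\epsilon}\|_\infty$ to decay strictly faster than the scaling in \eqref{vie} permits, contradicting $\|v_{N+2,\epsilon}\|_\infty=1$. With $\beta\neq 0$ at hand, the same representation combined with the explicit decay $Z_0(x)\sim|x|^{-(N-2)}$ at infinity gives \eqref{rtbdy46} in $C^1_{loc}(\overline{\Omega}\setminus\{x_0\})$ along the lines of the proof of \eqref{vie-2}.

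For \eqref{rt}, the plan is to pair \eqref{ele-2} with the projection $PZ_{0,\epsilon}$ of the dilation mode at $(x_\epsilon,\mu_\epsilon)$ and integrate. Using $-\Delta Z_{0,\epsilon}=pW^{p-1}[x_\epsilon,\mu_\epsilon]\,Z_{0,\epsilon}$ in $\mathbb{R}^N$ and $-\Delta PW[x_\epsilon,\mu_\epsilon]=W[x_\epsilon,\mu_\epsilon]^p$ in $\Omega$, together with the ansatz \eqref{pwconcentration-1}, one arrives at
\begin{equation*}
(1-\lambda_{N+2,\epsilon})\int_\Omega p u_\epsilon^{p-1}\, v_{N+2,\epsilon}\, PZ_{0,\epsilon}\,dx\;=\;\mathcal{J}_{\mathrm{Rob}}(\epsilon)+\mathcal{J}_{\mathrm{sub}}(\epsilon)+o\big(\mu_\epsilon^{-(N-2)}\big).
\end{equation*}
Here $\mathcal{J}_{\mathrm{Rob}}$ collects the boundary correction $Z_{0,\epsilon}-PZ_{0,\epsilon}$ and $W^{p-1}-u_\epsilon^{p-1}$; its leading part is governed by the Robin-function expansion $PW[x_\epsilon,\mu_\epsilon]-W[x_\epsilon,\mu_\epsilon]=-\mu_\epsilon^{-(N-2)/2}(N(N-2))^{(N-2)/2}H(\cdot,x_\epsilon)+\mathrm{l.o.t.}$, which when integrated against $W^{p-1}[0,1]Z_0$ produces the term $(N-2)R_N|\phi(x_0)|$ via a standard beta-integral. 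The quantity $\mathcal{J}_{\mathrm{sub}}$ collects the subcritical contribution $\epsilon q u_\epsilon^{q-1}v_{N+2,\epsilon}$, whose order is set by the concentration rate $\mu_\epsilon^{-1}\sim\epsilon^{2/(N-6+q(N-2))}$ from \eqref{pwconcentration-1} and yields the term $\tfrac{2(p-q)}{p-1}B_{p,q}|\phi(x_0)|\mathcal{E}_N$. A direct evaluation of $\int_{\mathbb{R}^N}pW^{p-1}[0,1]Z_0^{\,2}$ delivers the denominator $\mathcal{F}_N$ at the leading order $\|u_\epsilon\|_\infty^{-2}\sim\mu_\epsilon^{-(N-2)}$, giving \eqref{rt} with the claimed constant. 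The inequality $\mathcal{C}_{p,q}<0$ then follows from $\phi(x_0)<0$ (maximum principle, cf.\ \eqref{Robin}) combined with the signs of the explicit Gamma-function constants in $R_N$ and $B_{p,q}$.

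The main obstacle is the third step. Since $\mathcal{J}_{\mathrm{Rob}}$ and $\mathcal{J}_{\mathrm{sub}}$ are of the same order $\mu_\epsilon^{-(N-2)}$, which is precisely the balance built into \eqref{pwconcentration-1}, the control of the remainder $R_\epsilon$ in the ansatz must be pushed one order beyond what was needed in Theorems \ref{Figalli-1}--\ref{remainder terms}; this is also where the exponent restrictions on $q$ visible in Theorem \ref{Figalli} should reappear, governing the integrability of $u_\epsilon^{q-1}v_{N+2,\epsilon}$ against $PZ_{0,\epsilon}$ near $x_\epsilon$. Extracting the precise numerical values of $R_N$ and $B_{p,q}$ is then a matter of Gamma-function identities for the radial integrals of $W$ and $Z_0$.
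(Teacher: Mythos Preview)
Your treatment of the first two parts---$\lambda_{N+2,\epsilon}\to 1$ via min-max, identification of the $C^1_{loc}$-limit in the kernel of $L$, and elimination of the translation components by passing the orthogonality $\int(pu_\epsilon^{p-1}+\epsilon q u_\epsilon^{q-1})v_{N+2,\epsilon}v_{j,\epsilon}=0$ to the limit---matches the paper's argument essentially line for line. The conclusion $\beta\neq 0$ also follows more cheaply than you indicate: once the $\alpha_k$ vanish, the uniform decay estimate \eqref{day2} together with $\|V_{N+2,\epsilon}\|_{L^\infty}=1$ forces the limit to be nontrivial, exactly as in Lemma~\ref{identity}; no Green-representation argument is needed here.

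For the expansion \eqref{rt}, however, your route diverges from the paper's, and the divergence explains the obstacle you flag at the end. You propose to pair \eqref{ele-2} with the projected bubble dilation mode $PZ_{0,\epsilon}$, which forces you to compare $u_\epsilon^{p-1}$ with $W^{p-1}[x_\epsilon,\mu_\epsilon]$ and hence to control the remainder $R_\epsilon$ in the ansatz \eqref{pwconcentration-1} one order sharper than before. The paper sidesteps this entirely by testing not against the bubble dilation but against the \emph{exact-solution} dilation $w(x)=(x-x_\epsilon)\cdot\nabla u_\epsilon+\tfrac{2}{p-1}u_\epsilon$. Because $w$ satisfies the exact identity $-\Delta w=(pu_\epsilon^{p-1}+\epsilon q u_\epsilon^{q-1})w+\tfrac{2(p-q)}{p-1}\epsilon u_\epsilon^q$ (Lemma~\ref{nabla}), pairing with $v_{N+2,\epsilon}$ and subtracting yields the clean Pohozaev-type identity of Corollary~\ref{conseq},
\[
\int_{\partial\Omega}((x-x_\epsilon),\nu)\,\partial_\nu u_\epsilon\,\partial_\nu v_{N+2,\epsilon}\,dS
=(1-\lambda_{N+2,\epsilon})\int_\Omega(pu_\epsilon^{p-1}+\epsilon q u_\epsilon^{q-1})w\,v_{N+2,\epsilon}
+\tfrac{2(p-q)}{p-1}\epsilon\int_\Omega u_\epsilon^q v_{N+2,\epsilon},
\]
with no ansatz error whatsoever. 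The left side is then read off from the $C^1$-convergence of $\|u_\epsilon\|_\infty u_\epsilon$ and $\|u_\epsilon\|_\infty^2 v_{N+2,\epsilon}$ to multiples of $G(\cdot,x_0)$ on $\partial\Omega$ (equations \eqref{ifini} and the analogue of Lemma~\ref{m00}), producing $(N-2)R_N|\phi(x_0)|$ via the boundary identity \eqref{GX0}; the second term on the right yields $\tfrac{2(p-q)}{p-1}B_{p,q}|\phi(x_0)|\mathcal{E}_N$ via \eqref{Fn}; and the first term on the right gives $(1-\lambda_{N+2,\epsilon})\mathcal{F}_N$ after rescaling. This is literally the same computation as \eqref{refor} in Section~\ref{section6}, now run with $\beta\neq 0$ rather than to derive a contradiction. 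Your $PZ_{0,\epsilon}$-pairing would eventually recover the same constants, but at the cost of the sharp remainder estimate you yourself identify as the main difficulty; the paper's choice of test function built from $u_\epsilon$ rather than from $W$ is precisely what eliminates that difficulty.
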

\begin{thm}\label{thmprtb-1}
The eigenvalue $\lambda_{N+2,\epsilon}$ is simple and corresponding eigenfunction $v_{N+2,\epsilon}$ has only two nodal regions if $\epsilon$ is small enough. Moreover, the closure of the nodal set of the eigenfunction $v_{N+2,\epsilon}$ does not touch the boundary of $\Omega$.
\end{thm}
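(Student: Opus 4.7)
The plan is to extract qualitative information about $v_{N+2,\epsilon}$ at two scales: the inner bubble scale near $x_0$ already controlled by Theorem \ref{thmprtb}, and the outer scale governed by a Green's-function representation. Simplicity and the nodal structure will then follow from matching these two pictures.

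For simplicity I would argue by contradiction. Suppose $\lambda_{N+2,\epsilon}$ admits two linearly independent eigenfunctions $w_\epsilon^{(1)}, w_\epsilon^{(2)}$, both normalized to $\|w_\epsilon^{(j)}\|_{L^\infty(\Omega)}=1$ and orthogonal in the weighted inner product $\int_\Omega(pu_\epsilon^{p-1}+\epsilon q u_\epsilon^{q-1})\cdot\,dx$ to each $v_{i,\epsilon}$ with $i=1,\ldots,N+1$. Applying the blow-up compactness behind Theorem \ref{thmprtb} to each $w_\epsilon^{(j)}$, its rescaling in the sense of \eqref{vie} converges in $C^1_{loc}(\mathbb{R}^N)$ to an element of the $(N+1)$-dimensional kernel of $-\Delta V=pW^{p-1}[0,1]V$, which is spanned by the translation modes $\partial_{\xi_k}W[0,1]$, $k=1,\ldots,N$, and the radial mode $\Psi_0(x):=\frac{N(N-2)-|x|^2}{(N(N-2)+|x|^2)^{N/2}}$. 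The orthogonality passes to the limit and, since the limits of $v_{i,\epsilon}$ for $i=2,\ldots,N+1$ are precisely the translation modes (Theorem \ref{Figalli}), forces each limit to be a multiple of $\Psi_0$. A suitable linear combination $w_\epsilon:=w_\epsilon^{(1)}-\alpha_\epsilon w_\epsilon^{(2)}$, renormalized to $\|w_\epsilon\|_{L^\infty(\Omega)}=1$, therefore has rescaled limit identically zero. Inserting this into the Green's-function representation
\[
w_\epsilon(x)=\lambda_{N+2,\epsilon}\int_\Omega G(x,y)\bigl(pu_\epsilon^{p-1}(y)+\epsilon q u_\epsilon^{q-1}(y)\bigr)w_\epsilon(y)\,dy,
\]
and noting that the integrand is essentially supported in the bubble region where $w_\epsilon\to 0$, one concludes $w_\epsilon\to 0$ uniformly in $\Omega$, contradicting the normalization.

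For the two-nodal-regions claim and the fact that the nodal set stays away from $\partial\Omega$ I would combine two inputs. First, Theorem \ref{thmprtb} gives $V_{N+2,\epsilon}\to\beta\Psi_0$ with $\beta\neq 0$, and $\Psi_0$ vanishes exactly on the sphere $|x|=\sqrt{N(N-2)}$ with opposite constant signs on either side. A standard implicit-function argument then shows that in any large rescaled ball the nodal set of $V_{N+2,\epsilon}$ is a small $C^1$-perturbation of this sphere, hence a single closed hypersurface separating two regions. Second, applying the Green's-function analysis of the previous paragraph directly to $v_{N+2,\epsilon}$ yields the convergence $\|u_\epsilon\|_{L^\infty(\Omega)}^2 v_{N+2,\epsilon}\to c\,G(\cdot,x_0)$ in $C^1_{loc}(\overline{\Omega}\setminus\{x_0\})$ for some nonzero constant $c$ proportional to $\beta\int_{\mathbb{R}^N}pW^{p-1}[0,1]\Psi_0\,dx$. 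Since $G(\cdot,x_0)$ has strict sign in $\Omega\setminus\{x_0\}$, $v_{N+2,\epsilon}$ inherits a uniform sign on $\overline{\Omega}\setminus B_r(x_0)$ for every fixed $r>0$ and all sufficiently small $\epsilon$; this confines the nodal set to a small neighborhood of $x_0$, produces exactly two nodal regions, and keeps the closure of the nodal set away from $\partial\Omega$.

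The main obstacle will be the matched-asymptotics step underlying both parts: identifying the correct outer normalization $\|u_\epsilon\|_{L^\infty(\Omega)}^2$, computing the nonzero limit constant $c$, and controlling the intermediate region between the bubble scale $\|u_\epsilon\|_{L^\infty(\Omega)}^{-(p-1)/2}$ and fixed macroscopic distances, where neither the inner rescaled expansion nor the outer Green's-function expansion is sharp on its own. Once this matching is in place, both the simplicity argument (which uses it to force the contradiction sequence $w_\epsilon$ to vanish) and the sign analysis away from $x_0$ (which uses it to read off a definite sign from $G(\cdot,x_0)$) follow cleanly.
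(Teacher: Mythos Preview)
Your treatment of the nodal structure is essentially the paper's argument: inner convergence $V_{N+2,\epsilon}\to\beta\Psi_0$ from Theorem~\ref{thmprtb} together with the outer convergence $\|u_\epsilon\|_\infty^2 v_{N+2,\epsilon}\to cG(\cdot,x_0)$ (with $c\neq 0$ proportional to $\beta$) pins the sign of $v_{N+2,\epsilon}$ near $x_0$ and away from $x_0$, and a maximum-principle/bridging argument handles the annular region. That part is fine.

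The simplicity argument, however, has a genuine gap. You form $\tilde w_\epsilon=w_\epsilon^{(1)}-\alpha_\epsilon w_\epsilon^{(2)}$ so that its rescaling $\tilde W_\epsilon\to 0$ in $C^1_{loc}$, then renormalize to $\|w_\epsilon\|_{L^\infty}=1$ and assert that the rescaled limit is still zero. But the uniform decay $|\tilde W_\epsilon|\le CW$ (inherited from the decay estimate for each $w_\epsilon^{(j)}$) together with $\tilde W_\epsilon\to 0$ in $C^1_{loc}$ forces $\|\tilde w_\epsilon\|_{L^\infty}=\|\tilde W_\epsilon\|_{L^\infty(\Omega_\epsilon)}\to 0$. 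After dividing by this vanishing quantity, the renormalized $w_\epsilon$ is again an $L^\infty$-normalized eigenfunction for $\lambda_{N+2,\epsilon}$, to which the \emph{same} blow-up compactness applies: its rescaling satisfies $|W_\epsilon|\le CW$ and converges in $C^1_{loc}$ to a solution of $-\Delta V=pW^{p-1}V$, and the maximum-point argument from Lemma~\ref{identity} shows this limit is \emph{nonzero}. So the claim ``rescaled limit identically zero after renormalization'' fails, and the Green's-function step never gets off the ground.

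The paper bypasses this entirely with a one-line argument: both $w_\epsilon^{(1)},w_\epsilon^{(2)}$ have rescaled limits $\beta_1\Psi_0,\beta_2\Psi_0$ with $\beta_1,\beta_2\neq 0$ (this is exactly what you establish before taking the combination), and they are orthogonal in the weighted inner product. Passing the relation
\[
\int_\Omega\bigl(pu_\epsilon^{p-1}+\epsilon q u_\epsilon^{q-1}\bigr)w_\epsilon^{(1)}w_\epsilon^{(2)}\,dx=0
\]
to the limit via the decay estimates and dominated convergence gives $\beta_1\beta_2\int_{\mathbb{R}^N}pW^{p-1}\Psi_0^2\,dx=0$, which is impossible. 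No linear combination, no renormalization, no intermediate-region analysis is needed for simplicity.
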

	
Let us recall that the Morse index of a solution $u_{\epsilon}$ to \eqref{ele-1.1} can be defined as
$$m(u_{\epsilon}):=\sharp\{k\in\mathbb{N}:~\lambda_{k,\epsilon}<1\}$$
where $\lambda_{1,\epsilon}<\lambda_{2,\epsilon}\leq\lambda_{3,\epsilon},\cdots$ is the sequence of eigenvalues for the linearized problem of \eqref{ele-1.1}.
Also the augmented Morse index of $x_0$, as a critical point of $\phi(x)$:
$$m(x_0):=\sharp\{k\in\mathbb{N}:~\nu_{k}\leq0\},$$
where $\nu_1\leq\nu_2\leq\cdots\nu_N$ are the eigenvalues of the Hessian matrix $D^2\phi(x_0)$.
As a result, we obtain the following corollary.
\begin{cor}\label{emm-1}
Let the assumptions of Theorem \ref{Figalli-1} be satisfied. Then
$$1\leq1+m(x_0)\leq m(u_{\epsilon})\leq N+1$$
for sufficiently small $\epsilon>0$. Therefore if the matrix $D^2\phi(x_0)$ nondegenerate, then we have
$$m(u_{\epsilon})=1+m(x_0).$$
\end{cor}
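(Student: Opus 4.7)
The corollary is essentially a bookkeeping consequence of Theorems \ref{Figalli-1}, \ref{remainder terms}, and \ref{thmprtb}, together with the monotonicity $\lambda_{1,\epsilon}\leq\lambda_{2,\epsilon}\leq\cdots$ of the spectrum of the linearised problem \eqref{ele-2}. The plan is to read off the sign of each of the first $N+2$ eigenvalues relative to $1$ for $\epsilon$ small, and then invoke monotonicity to cut off the tail. Theorem \ref{Figalli-1} gives $\lambda_{1,\epsilon}\to 1/p<1$, so $\lambda_{1,\epsilon}<1$ contributes one unit to $m(u_\epsilon)$. For $i=2,\ldots,N+1$, the refined expansion of Theorem \ref{remainder terms},
\begin{equation*}
(\lambda_{i,\epsilon}-1)\,\|u_\epsilon\|_\infty^{p+1}\longrightarrow \mathcal{M}\,\nu_{i-1},\qquad \mathcal{M}>0,
\end{equation*}
shows that the sign of $\lambda_{i,\epsilon}-1$ is controlled by the sign of $\nu_{i-1}$. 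Finally Theorem \ref{thmprtb} yields $\lambda_{N+2,\epsilon}>1$ for small $\epsilon$, since $1-\lambda_{N+2,\epsilon}=\|u_\epsilon\|_\infty^{-2}(\mathcal{C}_{p,q}+o(1))$ with $\mathcal{C}_{p,q}<0$.

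From this, the upper bound $m(u_\epsilon)\leq N+1$ is immediate: by monotonicity $\lambda_{k,\epsilon}\geq\lambda_{N+2,\epsilon}>1$ for every $k\geq N+2$, so at most the first $N+1$ eigenvalues can lie strictly below $1$. In the nondegenerate case, $D^2\phi(x_0)$ has no zero eigenvalue, so $m(x_0)=\#\{k:\nu_k<0\}$, and the counting above gives
\begin{equation*}
m(u_\epsilon)=1+\#\{k:\nu_k<0\}=1+m(x_0),
\end{equation*}
which simultaneously settles the equality statement and the middle inequality; the trivial bound $m(x_0)\geq 0$ yields $1\leq 1+m(x_0)$.

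The delicate point, and the main obstacle, is the lower bound $1+m(x_0)\leq m(u_\epsilon)$ in the possibly degenerate setting, since the expansion of Theorem \ref{remainder terms} is silent about the sign of $\lambda_{i,\epsilon}-1$ when $\nu_{i-1}=0$. I would handle this by a min--max argument rather than direct counting: using the eigenvectors $\alpha^{i}$ of $D^2\phi(x_0)$ identified in Theorem \ref{remainder terms} and the asymptotic shape of $V_{i,\epsilon}$ in Theorem \ref{Figalli}, one constructs a $(1+m(x_0))$-dimensional subspace of $H_0^1(\Omega)$, spanned by $v_{1,\epsilon}$ together with the eigenfunctions $v_{i,\epsilon}$ corresponding to indices with $\nu_{i-1}\leq 0$, on which the quadratic form
\begin{equation*}
Q_\epsilon(v):=\int_\Omega|\nabla v|^2\,dx-\int_\Omega\bigl(pu_\epsilon^{p-1}+\epsilon q u_\epsilon^{q-1}\bigr)v^2\,dx
\end{equation*}
is non-positive, and strictly negative away from its kernel, for $\epsilon$ sufficiently small. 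The min--max characterisation of Morse index then forces $m(u_\epsilon)\geq 1+m(x_0)$. To rule out sign-indefinite contributions from the zero-eigenvalue directions one has to push the Robin-function expansion used in Theorem \ref{remainder terms} to the next order, which is the only non-mechanical ingredient in the proof.
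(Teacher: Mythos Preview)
Your approach is exactly the paper's: its entire proof of the corollary is the single sentence ``The conclusion follows by exploiting Theorem \ref{Figalli-1}, Theorem \ref{remainder terms} and \eqref{rt},'' i.e.\ precisely the eigenvalue-counting argument you spell out. Your treatment of $\lambda_{1,\epsilon}<1$, of the sign of $\lambda_{i,\epsilon}-1$ via $(\lambda_{i,\epsilon}-1)\|u_\epsilon\|_\infty^{p+1}\to\mathcal{M}\nu_{i-1}$, and of the cutoff $\lambda_{N+2,\epsilon}>1$ from \eqref{rt}, is identical to what the paper intends, only written out more carefully.

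On the degenerate case: you are right that Theorem \ref{remainder terms} alone does not decide the sign of $\lambda_{i,\epsilon}-1$ when $\nu_{i-1}=0$, and hence that the inequality $1+m(x_0)\le m(u_\epsilon)$ with the \emph{augmented} index $m(x_0)=\#\{k:\nu_k\le0\}$ is not immediate. The paper does not address this point; its one-line proof cites the same three results you do and nothing more. Your proposed min--max fix, however, is circular as stated: taking the test space to be the span of the actual eigenfunctions $v_{i,\epsilon}$ gives $Q_\epsilon(v_{i,\epsilon})=(\lambda_{i,\epsilon}-1)\int(pu_\epsilon^{p-1}+\epsilon qu_\epsilon^{q-1})v_{i,\epsilon}^2$, which simply reproduces the unknown sign, so the subspace is not non-positive unless you already know $\lambda_{i,\epsilon}\le1$. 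A genuine argument would require either a higher-order expansion of $\lambda_{i,\epsilon}-1$ (as you note, but do not carry out) or a different choice of test functions. In short, your proof matches the paper's in scope and method; the extra paragraph on degeneracy flags a real subtlety, but neither you nor the paper resolves it.
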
	

On the other hand, we naturally also consider the asymptotic behavior and nondegeneracy of the solutions of \eqref{ele-1.1-1} in the case $N\geq3$, $1<q<p$ and $\kappa(x)\in C^2(\overline{\Omega})$. Our result can be stated as follows.
\begin{thm}\label{Figa}
Let $1<q<p$ if $N\geq5$, $\frac{3}{2}<q<p$ if $N=4$, $\epsilon$ is sufficiently small, $\mathscr{K}_{+}:=\{x\in\Omega:~\kappa(x)>0\}\neq\emptyset$. Assume that $\{u_{\epsilon}\}$ be a
family of solutions of \eqref{ele-1.1-1} which blow-up and concentrate at a point $x_0\in\Omega$. If the matrix
\begin{equation}\label{CC}
\left(\frac{1}{\kappa}\big(\frac{\partial^2 \kappa}{\partial x_{i}\partial x_{j}}\big)-\Gamma_{p,q}\frac{1}{\phi}\big(\frac{\partial^2\phi}{\partial x_{i}\partial x_{j}}\big)\right)_{1\leq i,j\leq N}(x)\quad \mbox{in}\quad\mathscr{K}_{+}
\end{equation}
is non singular at $x_0$, then $u_{\epsilon}$ is nondegenerate, i.e. the linearized problem
\begin{equation}\label{nondegene}
\left\lbrace
\begin{aligned}
    &-\Delta w=\big(pu^{p-1}+\epsilon q\kappa(x)u^{q-1}\big)w \quad \quad\mbox{in}\quad \Omega,\\
    &w=0\quad\quad\quad \quad\quad\quad\quad\quad\quad\quad\quad\quad\quad\hspace{3mm} \mbox{on}\hspace{2mm}\partial\Omega,
   \end{aligned}
\right.
\end{equation}
admits only the trivial solution $w\equiv0$. $\Gamma_{p,q}$ is given by
$$\Gamma_{p,q}=\frac{a_N(p-q+1)N}{2b_N}\frac{\Gamma(\frac{N}{2})\Gamma(\frac{(N-2)(q+1)-N}{2})}{\Gamma(\frac{(N-2)(q+1)}{2})}$$
with
$$a_N=\int_{0}^{\infty}\frac{r^{N-1}dr}{(N(N-2)+r^2)^{\frac{N+2}{2}}}
\quad\mbox{and}\quad b_N=\int_{0}^{\infty}\frac{r^{N-1}dr}{(N(N-2)+r^2)^{\frac{N-2}{2}(q+1)}}.$$
\end{thm}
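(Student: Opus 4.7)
The plan is to argue by contradiction. Suppose \eqref{nondegene} admits a nontrivial solution $w_\epsilon$, and normalize $\|w_\epsilon\|_{L^\infty(\Omega)}=1$. Let $\delta_\epsilon:=\|u_\epsilon\|_{L^\infty(\Omega)}^{-2/(N-2)}$ denote the concentration length scale (compatible with \eqref{vie} and \eqref{pwconcentration-1}) and set
\[
W_\epsilon(y) := w_\epsilon(\delta_\epsilon y + x_\epsilon),\qquad y\in\delta_\epsilon^{-1}(\Omega-x_\epsilon).
\]
Rescaling \eqref{nondegene}, the critical term $\delta_\epsilon^{2} p u_\epsilon^{p-1}$ tends to $p W[0,1]^{p-1}$, while the subcritical term $\delta_\epsilon^{2}\epsilon q \kappa(x) u_\epsilon^{q-1}$ is of order $\epsilon\,\delta_\epsilon^{2-(q-1)(N-2)/2}$, which tends to $0$ under the ranges of $q$ and $N$ in the hypothesis. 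Using $|W_\epsilon|\le 1$ and standard elliptic estimates, we obtain a $C^1_{loc}$-subsequential limit $W_0\in L^\infty\cap\mathcal{D}^{1,2}(\mathbb{R}^N)$ solving $-\Delta W_0 = p W[0,1]^{p-1} W_0$. By the classical nondegeneracy of the Aubin--Talenti bubble,
\[
W_0 = c_0\,\partial_\mu W[0,1] + \sum_{i=1}^{N} c_i\,\partial_{\xi_i}W[0,\xi]\big|_{\xi=0}.
\]

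The second step is to extract a linear system for $(c_0,c_1,\ldots,c_N)$ by testing the equation satisfied by $w_\epsilon$ against the projected kernel elements
\[
Z^{0}_\epsilon := \partial_\mu PW[x_\epsilon,\delta_\epsilon^{-1}],\qquad Z^{j}_\epsilon := \partial_{\xi_j} PW[x_\epsilon,\delta_\epsilon^{-1}],\quad 1\le j\le N,
\]
and subtracting the identity $-\Delta Z^{j}_\epsilon = p W[x_\epsilon,\delta_\epsilon^{-1}]^{p-1} Z^{j}_\epsilon$. Three leading contributions arise: (a) the Robin-function correction from $PW - W$, which at second order in $\delta_\epsilon$ brings in the Hessian $D^2\phi(x_0)$ via Taylor expansion of $H(\cdot,x_\epsilon)$ about $x_0$; (b) the perturbation $\epsilon q\int\kappa\,u_\epsilon^{q-1} w_\epsilon Z^{j}_\epsilon$, which, after rescaling and Taylor expansion of $\kappa$ around $x_0$, brings in $D^2\kappa(x_0)$; (c) remainders controlled by $\|R_\epsilon\|_{H^1_0(\Omega)}$ and lower powers of $\delta_\epsilon$. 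A careful tracking of the exponents shows that (a) and (b) balance at the same leading order, with the one-dimensional bubble integrals $a_N$, $b_N$ and the associated Gamma-function identities producing precisely the constant $\Gamma_{p,q}$. The $j$-tests for $j=1,\ldots,N$ therefore reduce, up to a positive multiplicative constant, to the linear system
\[
A(x_0)\begin{pmatrix}c_1\\\vdots\\c_N\end{pmatrix}=0,\qquad A(x_0):=\left(\frac{1}{\kappa(x_0)}\frac{\partial^2\kappa}{\partial x_i\partial x_j}(x_0)-\Gamma_{p,q}\frac{1}{\phi(x_0)}\frac{\partial^2\phi}{\partial x_i\partial x_j}(x_0)\right)_{\!ij},
\]
while the $\mu$-test yields a decoupled scalar identity which, via the reduced balance equation already used to determine $\mu_\epsilon$ in \eqref{pwconcentration-1}, forces $c_0=0$.

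Since $A(x_0)$ is non-singular by assumption, the system forces $c_1=\cdots=c_N=0$, and hence $W_0\equiv 0$. To close the argument, I would upgrade this inner vanishing to a global one along the lines of the proof of Theorem \ref{Figalli-1}: the Green's-function representation of $w_\epsilon$ implies that $\|u_\epsilon\|_{L^\infty(\Omega)}^{2}w_\epsilon\to c_\ast G(\cdot,x_0)$ in $C^{1,\alpha}_{loc}(\overline\Omega\setminus\{x_0\})$, with $c_\ast$ proportional to $\int_{\mathbb{R}^N}W[0,1]^{p-1}W_0$. Since $W_0\equiv 0$ the constant vanishes, so $w_\epsilon\to 0$ locally uniformly on $\overline\Omega\setminus\{x_0\}$; combined with $W_\epsilon\to 0$ on bounded sets of $\mathbb{R}^N$, this contradicts $\|w_\epsilon\|_{L^\infty(\Omega)}=1$ via a standard blow-up argument on the maximum point of $|w_\epsilon|$.

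The principal technical obstacle is the simultaneous appearance of $D^2\kappa(x_0)$ and $D^2\phi(x_0)$ at exactly the same leading order in the testing identities, and the verification that the bubble-integral constants package precisely into the ratio $\Gamma_{p,q}$ stated in the theorem. This amounts to a delicate expansion of $\int\kappa u_\epsilon^{q-1}Z^{j}_\epsilon W_\epsilon$ and of the Robin-correction integral, aligning the powers of $\delta_\epsilon$ and $\epsilon$ through the specific concentration exponent $2/(N-6+q(N-2))$ of Molle--Pistoia. This balance is the genuine technical heart of the proof and the reason the hypothesis on $A(x_0)$ takes exactly the stated form.
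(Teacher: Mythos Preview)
Your overall architecture matches the paper's: argue by contradiction, rescale to the limiting linearized bubble equation, identify the limit as a kernel element $c_0\,\partial_\mu W + \sum_i c_i\,\partial_{\xi_i}W$, show all coefficients vanish, and reach a contradiction with $\|w_\epsilon\|_\infty = 1$. The mechanism you propose for extracting the coefficient relations, however, is genuinely different. You test against the \emph{projected} kernel elements $Z^j_\epsilon = \partial_{\xi_j}PW$ (reduction style), so that the Robin Hessian enters through the expansion of $PW-W$. The paper instead tests against $\partial_{x_j}\tilde u_\epsilon$ and the dilation field $(x-x_\epsilon)\cdot\nabla\tilde u_\epsilon + \tfrac{2}{p-1}\tilde u_\epsilon$, which do \emph{not} vanish on $\partial\Omega$; the resulting boundary integrals are evaluated via the outer convergences $\|\tilde u_\epsilon\|_\infty \tilde u_\epsilon \to c\,G(\cdot,x_0)$ and $\|\tilde u_\epsilon\|_\infty^{(p+1)/2}\tilde w_\epsilon \to c'\sum_k\alpha_k\,\partial_{y_k}G(\cdot,x_0)$, and the Robin Hessian appears through the Green's-function identity $\int_{\partial\Omega}\partial_{x_j}G\,\partial_{y_k}G\,\partial_\nu G\,dS = \tfrac12\,\partial^2_{jk}\phi$. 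Both routes land on the same matrix $A(x_0)$; yours bypasses the refined outer asymptotics, at the cost of needing the full $PW-W$ expansion and sharp control of $R_\epsilon$.

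There is one real gap. Your treatment of $c_0=0$ is too quick: invoking ``the reduced balance equation used to determine $\mu_\epsilon$'' does not by itself show that the coefficient multiplying $c_0$ in the $\mu$-identity is nonzero. In the paper this is exactly Step~1 (showing $\beta=0$), which via the Pohozaev identity for the dilation field reduces to the strict sign
\[
(N-2)R_N + \frac{2(p-q)}{p-1}B_{p,q}\,\mathcal{E}_N < 0,
\]
verified case-by-case in $(N,q)$ earlier in the paper. In your framework this is precisely the nondegeneracy of $\mu_0$ as a critical point of the one-dimensional reduced energy, and it must be checked; it is not a consequence of the hypothesis on $A(x_0)$, and it is where the actual restrictions on $q$ and $N$ are used.

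Finally, your closing step is more elaborate than needed. Once $W_0\equiv 0$, the paper simply notes that the point $\xi_\epsilon$ with $|W_\epsilon(\xi_\epsilon)|=1$ must escape to infinity, which contradicts the uniform pointwise bound $|W_\epsilon(y)|\le C\,W(y)$ already established. The detour through outer Green's-function convergence is unnecessary.
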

\begin{Rem}
As noted in Remark \ref{qdefan} earlier, we cannot guarantee the validity of the above conclusion for the case $1<q\leq\frac{3}{2}$ and $N=4$.
\end{Rem}

\subsection{Structure of the paper.}
	The paper is organized as follows. In section \ref{section2}, we establish a decay estimate of rescaled eigenfunctions $V_{i,\epsilon}$. Section \ref{asymptotic} is devoted to consider the first eigenvalue and the first eigenfunction, and concluding the proof of Theorem \ref{Figalli-1}. In sections \ref{section4}-\ref{section3-1}, we give some limit characterizations
of the eigenfunctions $v_{i,\epsilon}$ and an important estimate on the eigenvalues $\lambda_{i,\epsilon}$ for $i=2,\cdots,N+1$, respectively. Based on these results and combined with the various identities of derivatives of Green's function and the rescaled functions, we complete the proof of Theorem \ref{Figalli} in section \ref{section6}.
In section \ref{sangshen}, Theorem \ref{remainder terms} is proved and by following the same argument as in \cite{GP05}, we give the proof of Theorems \ref{eigenfunctions-1} and \ref{eigenfunctio1} in section \ref{section9}. In section \ref{section8}, we establish an important estimate on the eigenvalue $\lambda_{N+2,\epsilon}$ and we conclude that the proof of Theorems \ref{thmprtb} and \ref{thmprtb-1}, and using those results, we have that, the Morse
index of the solution $u_{\epsilon}$ is $N+1$. Motivated by the framework and methods of the proof of Theorem \ref{Figalli} and Theorem \ref{remainder terms}, in the last section we will continue to study the nondegeneracy result of solutions of \eqref{ele-1.1-1} and we prove Theorem \ref{Figa}.

\subsection{Notations.}
Throughout this paper, $C$ and $c$ are indiscriminately used to denote various absolutely positive constants. $a\approx b$ means that $a\lesssim b$ and $\gtrsim b$, and we will use big $O$ and small $o$ notations to describe the limit behavior of a certain quantity as $\epsilon\rightarrow0$.

\section{Decay estimate of the rescaled eigenfunction $V_{i,\epsilon}$}\label{section2}
In this section, we are devoted to establish a decay estimate of rescaled least energy solutions $v_{i,\epsilon}$ for $i\in\mathbb{N}$. For the simplicity of notations, we write $W(x)$ instead of $W[0,1](x)$ and $\|\cdot\|_{\infty}$ instead of the norm of $L^{\infty}(\Omega)$ in the sequel. first of all, we recall the analysis of Gao et al. in \cite{GM} concerning the least energy solutions $u_{\epsilon}$ to the problem \eqref{ele-1.1}.
We choose a point $x_\epsilon\in\Omega$ and a number $\lambda_{\epsilon}>0$ by the relation
\begin{equation}\label{miu}
\mu_{\epsilon}^{\frac{2}{p-1}}=\big\|u_{\epsilon}\big\|_{\infty}=u_\epsilon(x_\epsilon).
\end{equation}
Then one has that $\mu_{\epsilon}\rightarrow\infty$ as $\epsilon\rightarrow0$ and due to Lemma 2.5 in \cite{GM},
\begin{equation}\label{2-miu}
\epsilon\lesssim \mu_{\epsilon}^{-\frac{N-2}{2}(q-p+2)}=\big\|u_{\epsilon}\big\|_{\infty}^{p-q-2}.
\end{equation}
 We define a family of rescaled functions
\begin{equation}\label{miu-1}
U_{\epsilon}(x)=\mu_{\epsilon}^{-\frac{2}{p-1}}u_{\epsilon}(\mu_{\epsilon}^{-1}x+x_{\epsilon})\quad\mbox{for}\quad x\in\Omega_{\epsilon}=\mu_{\epsilon}(\Omega-x_{\epsilon}).
\end{equation}	
Granted these notions, we have
\begin{thm}[Theorem 1.1 of \cite{GM}]
Assume that $N\geq3$, $q\in(\max\{1,\frac{6-N}{N-2}\},2^{\ast}-1)$ and $\epsilon$ is sufficiently small. Let $u_\epsilon$ be a solution of \eqref{ele-1.1} satisfying \eqref{SS}.
 Then if $x_{\epsilon}$ is a maximum point $u_{\epsilon}$, i.e. $u_{\epsilon}(x_{\epsilon})=\|u_{\epsilon}\|_{L^{\infty}(\Omega)}$, we have that $x_{\epsilon}\rightarrow x_0\in\Omega$ as $\epsilon\rightarrow0$ and the following results hold:
\begin{itemize}
\item[$(a)$]
$x_0$ is a critical of $\phi(x)$ (Robin's function of $\Omega$) for $x\in\Omega$, and
\begin{equation}\label{Fn}
\lim\limits_{\epsilon\rightarrow0}\epsilon\big\|u_\epsilon\big\|_{\infty}^{q+2-p}=B_{p,q}\phi(x_0),
\end{equation}
where $$B_{p,q}=\frac{2(q+1)}{p-q+1}\frac{(N(N-2))^{\frac{N}{2}}\omega_N}{N^2}\frac{\Gamma(\frac{(N-2)(q+1)}{2})}{\Gamma(\frac{N}{2})\Gamma(\frac{(N-2)(q+1)-N}{2})}.$$
\item[$(b)$] For any $x\in\Omega\setminus\{x_0\}$, it holds that
\begin{equation}\label{ifini}
\lim\limits_{\epsilon\rightarrow0^{+}}\big\|u_{\epsilon}\big\|_{\infty}u_{\epsilon}(x)=\frac{1}{N}(N(N-2))^{\frac{N}{2}}\omega_NG(x,x_{0})\hspace{2mm}\mbox{in} \hspace{2mm}C^{1,\alpha}(\Omega\setminus\{x_0\}).
\end{equation}
\item[$(c)$] There exists a positive constant $C$, independent of $\epsilon$, such that
\begin{equation}\label{dus}
u_{\epsilon}(x)\leq C\big\|u_{\epsilon}\big\|_{\infty}\Big(\frac{N(N-2)}{N(N-2)+\|u_{\epsilon}\|^{p-1}_{\infty}|x-x_{\epsilon}|^2}\Big)^{\frac{N-2}{2}}.
\end{equation}
\end{itemize}
\end{thm}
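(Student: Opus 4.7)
\textbf{Proof proposal for Theorem 1.1 of \cite{GM} (the final statement).}

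The plan is to combine blow-up analysis, a uniform pointwise decay estimate of the profile, convergence to Green's function in the complement of the blow-up point, and two Pohozaev identities (one localized with the vector field $\nabla u$, and one with $(x-x_\epsilon)\cdot\nabla u$) to pin down both the location of the concentration point and the precise blow-up rate. First I would work with the rescaling \eqref{miu-1}. Since $U_\epsilon$ satisfies $-\Delta U_\epsilon = U_\epsilon^p + \epsilon \mu_\epsilon^{-2(p-q)/(p-1)} U_\epsilon^q$ on $\Omega_\epsilon$, with $\|U_\epsilon\|_\infty = U_\epsilon(0)=1$, and since the hypothesis \eqref{SS} forces $\dist(x_\epsilon,\partial\Omega)\mu_\epsilon\to\infty$ (otherwise a Kelvin reflection would produce a nontrivial entire solution supported in a half-space, contradicting the nonexistence of sign-changing minimizers), standard elliptic estimates and the Caffarelli--Gidas--Spruck classification give $U_\epsilon \to W$ in $C^2_{\mathrm{loc}}(\mathbb{R}^N)$ along with $x_\epsilon\to x_0\in\overline{\Omega}$. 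A second moment/mass argument, based on the fact that $\int_\Omega |\nabla u_\epsilon|^2$ converges to exactly one bubble's worth of Dirichlet energy, rules out $x_0\in\partial\Omega$.

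Next I would establish the pointwise bound (c). The strategy is to compare $u_\epsilon$ with a suitable supersolution: away from $x_\epsilon$ on scale $\mu_\epsilon^{-1}$ one uses that $u_\epsilon$ solves a linear equation $-\Delta u_\epsilon = V_\epsilon u_\epsilon$ with $V_\epsilon = u_\epsilon^{p-1}+\epsilon u_\epsilon^{q-1}$, and on each compact set of the blown-up picture the potential $V_\epsilon$ is dominated by $W^{p-1}$. A Moser iteration on annuli, combined with the scaling, upgrades the $C^2_{\mathrm{loc}}$ convergence $U_\epsilon\to W$ to the uniform envelope \eqref{dus}; the condition $q>(6-N)/(N-2)$, equivalently $(N-2)(q+1)>4$, ensures the tail $\epsilon u_\epsilon^{q-1}$ is integrable in the needed sense and does not disturb this iteration. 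Once \eqref{dus} is in hand, (b) follows by writing
\begin{equation*}
\|u_\epsilon\|_\infty u_\epsilon(x) = \|u_\epsilon\|_\infty\int_\Omega G(x,y)\bigl(u_\epsilon^p(y)+\epsilon u_\epsilon^q(y)\bigr)\,dy,
\end{equation*}
splitting the integral into $B_{r}(x_\epsilon)$ and its complement, using \eqref{dus} and the change of variables $y=x_\epsilon+\mu_\epsilon^{-1}z$ to identify the limit $\|u_\epsilon\|_\infty\int u_\epsilon^p \to \mu_\epsilon^{-(p-1)\cdot N/(2(p-1))}\|u_\epsilon\|_\infty \int W^p$; a direct computation with the explicit form of $W$ and \eqref{WMIU} yields exactly the constant $\tfrac1N(N(N-2))^{N/2}\omega_N = \int_{\mathbb{R}^N} W^p$ appearing in \eqref{ifini}, and the fact that $x\neq x_0$ makes the tail negligible by (c).

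For the critical point condition and the blow-up rate (a), I would apply two Pohozaev-type identities on the ball $B_{r}(x_\epsilon)$ for a fixed small $r>0$. Testing $-\Delta u_\epsilon=u_\epsilon^p+\epsilon u_\epsilon^q$ with $\partial_{x_k} u_\epsilon$ and integrating by parts yields
\begin{equation*}
\int_{\partial B_r(x_\epsilon)} \Bigl(\tfrac12|\nabla u_\epsilon|^2\nu_k - \partial_\nu u_\epsilon\,\partial_k u_\epsilon\Bigr)\,d\sigma = -\epsilon\,\frac{p-q}{(p+1)(q+1)}\int_{\partial B_r(x_\epsilon)} u_\epsilon^{q+1}\nu_k\,d\sigma + \text{lower order},
\end{equation*}
so multiplying by $\|u_\epsilon\|_\infty^2$ and passing to the limit, using (b) and the expansion $G(x,x_0)=\tfrac{1}{(N-2)\omega_N|x-x_0|^{N-2}}+H(x,x_0)$ and the definition \eqref{Robin}, one obtains $\partial_{x_k}\phi(x_0)=0$ for each $k$. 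Then testing with $(x-x_\epsilon)\cdot\nabla u_\epsilon + \tfrac{N-2}{2}u_\epsilon$ produces the balance
\begin{equation*}
\int_{\partial B_r(x_\epsilon)}\!\!\text{boundary terms} \;=\; \epsilon\,\frac{q(N-2)-(N+2)}{2(q+1)}\int_{B_r(x_\epsilon)} u_\epsilon^{q+1}\,dx;
\end{equation*}
dividing by $\|u_\epsilon\|_\infty^{-2}$ and using (c) to evaluate the right-hand side via the rescaled bubble integral $\int_{\mathbb{R}^N} W^{q+1}$ expressed through the Beta/Gamma identity (this is where the factor $\Gamma(\tfrac{(N-2)(q+1)}{2})/[\Gamma(N/2)\Gamma(\tfrac{(N-2)(q+1)-N}{2})]$ enters), while the left-hand side is computed from the Green function expansion and contributes $\phi(x_0)$ times a geometric constant, yields \eqref{Fn}.

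The main obstacle will be step three: matching the boundary integrals of the Pohozaev identity with the exact coefficient $B_{p,q}$. This requires not only the $C^{1,\alpha}$ convergence of $\|u_\epsilon\|_\infty u_\epsilon$ to a multiple of $G(\cdot,x_0)$ away from $x_0$ (so that the boundary terms on $\partial B_r(x_\epsilon)$ can be expanded in powers of $r$), but also tight control of the remainder $u_\epsilon - W_\epsilon^{\mathrm{proj}}$ where $W_\epsilon^{\mathrm{proj}}$ is the projected bubble at scale $\mu_\epsilon$. The hypothesis $q>\max\{1,(6-N)/(N-2)\}$ is exactly what guarantees that $\int_{\mathbb{R}^N} W^{q+1}$ converges and that $\epsilon\int u_\epsilon^{q+1}$ has the expected order $\epsilon\|u_\epsilon\|_\infty^{q+1-N(p-1)/2}$; verifying that sub-leading contributions vanish faster than the leading Robin-function term is the delicate balancing that produces the explicit constant $B_{p,q}$.
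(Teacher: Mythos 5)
This statement is not proved in the paper at all: it is quoted verbatim as Theorem~1.1 of \cite{GM} and used as a black box, so there is no internal proof to compare against. Your sketch follows the classical Han--Rey strategy (rescaling plus Caffarelli--Gidas--Spruck classification, a uniform bubble envelope, Green's representation for the $\|u_\epsilon\|_\infty u_\epsilon\to c_N G(\cdot,x_0)$ limit, and the two Pohozaev identities with $\partial_k u_\epsilon$ and $(x-x_\epsilon)\cdot\nabla u_\epsilon+\tfrac{N-2}{2}u_\epsilon$), which is exactly the type of argument behind \cite{GM}, and your bookkeeping is consistent: $\int_{\mathbb{R}^N}W^p=\tfrac1N(N(N-2))^{N/2}\omega_N$ matches \eqref{ifini}, the coefficient $\tfrac{q(N-2)-(N+2)}{2(q+1)}=\tfrac{(N-2)(q+1)-2N}{2(q+1)}$ is the correct subcritical defect, and balancing $\epsilon\int u_\epsilon^{q+1}\approx\epsilon\|u_\epsilon\|_\infty^{q-p}\int W^{q+1}$ against the $\|u_\epsilon\|_\infty^{-2}$ boundary (Robin) term does produce the rate \eqref{Fn} with the Beta/Gamma factor in $B_{p,q}$.

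Two places in your outline are genuinely thin rather than routine. First, the uniform envelope (c) is the technical heart of such results; ``Moser iteration on annuli with $V_\epsilon\lesssim W^{p-1}$'' is not enough as stated, because a bound on the potential alone cannot rule out slowly decaying tails --- in the literature this step is done either by a Kelvin-transform/moving-plane argument (Han) or by an iterated Green's-representation bootstrap of the type this paper itself uses in Section~\ref{section2} (Lemma on \eqref{decay22}, via Lemma \ref{Lem6.1}), and the restriction $q>\max\{1,\tfrac{6-N}{N-2}\}$ enters there, not merely through integrability of $\int W^{q+1}$. Second, excluding boundary concentration is more than a ``second moment/mass argument'': your Kelvin-reflection remark only gives $\mu_\epsilon\,\dist(x_\epsilon,\partial\Omega)\to\infty$, while $x_0\in\Omega$ (and not on $\partial\Omega$) requires uniform boundary estimates plus a Pohozaev-type or moving-plane argument near $\partial\Omega$ (equivalently, using that $\phi\to-\infty$ at the boundary together with the rate identity). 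With those two steps filled in as in Han/Rey, your plan is a correct reconstruction of the cited result.
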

We first state the following lemma which is useful in our analysis (see \cite{WY} for the proof).
\begin{lem}\label{Lem6.1}
For any constant $0<\sigma<N-2$, there is a constant $C>0$ such that
\begin{equation}\label{Lem6.1-0}
\int_{\mathbb{R}^n}\frac{1}{|y-x|^{N-2}}\frac{1}{(1+|x|)^{2+\sigma}}dx\leq
\frac{C}{(1+|y|)^\sigma}.
\end{equation}
\end{lem}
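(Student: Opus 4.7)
The plan is to establish this as a standard Riesz-potential/convolution bound by a dyadic-style decomposition of the domain of integration. Since the stated right-hand side $C/(1+|y|)^\sigma$ is just a positive constant whenever $|y|$ is bounded, and since for fixed $y$ the integrand decays like $|x|^{-(N-2)-(2+\sigma)}=|x|^{-(N+\sigma)}$ at infinity (integrable because $\sigma>0$) and is integrable near $y$ (since $N-2<N$), the case $|y|\le 1$ is immediate. So the real work is the regime $|y|\to\infty$, and the whole proof reduces to splitting $\mathbb R^N$ into three natural regions depending on which of $|x|$ or $|x-y|$ is small relative to $|y|$.

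First I would treat $A_1=\{|x-y|\le|y|/2\}$. Here the triangle inequality gives $|x|\ge|y|/2$, hence $(1+|x|)^{-(2+\sigma)}\le C|y|^{-(2+\sigma)}$, and a direct computation in polar coordinates centered at $y$ yields $\int_{A_1}|x-y|^{-(N-2)}\,dx\le C|y|^2$, producing a contribution of order $|y|^{-\sigma}$. Next, on $A_2=\{|x|\le|y|/2\}$ we have $|x-y|\ge|y|/2$, so $|x-y|^{-(N-2)}\le C|y|^{-(N-2)}$; the remaining integral $\int_{|x|\le|y|/2}(1+|x|)^{-(2+\sigma)}\,dx$ grows like $|y|^{N-2-\sigma}$, where the hypothesis $\sigma<N-2$ is used in an essential way to produce exactly this growth rate. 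Multiplying yields again $C|y|^{-\sigma}$.

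The remaining region $A_3=\{|x|\ge|y|/2,\;|x-y|\ge|y|/2\}$ is where both factors contribute simultaneously, so I would subdivide it according to the size of $|x|$. On $A_3\cap\{|x|\le 2|y|\}$ we still have $(1+|x|)\ge c|y|$ and $|x-y|\le C|y|$, so factor out the decay in $|x|$ and bound the remaining singular integral over a ball of radius $C|y|$ by $C|y|^2$. On $A_3\cap\{|x|\ge 2|y|\}$ the triangle inequality gives $|x-y|\ge|x|/2$, so the integrand is dominated by $C|x|^{-(N+\sigma)}$, which integrates to $C|y|^{-\sigma}$ over $\{|x|\ge 2|y|\}$. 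Summing the three contributions gives the desired bound.

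There is no serious obstacle here; the only thing worth stressing is the twofold role of the hypothesis $0<\sigma<N-2$: the lower bound $\sigma>0$ ensures integrability of the tail and gives a nontrivial decaying bound in $|y|$, while the upper bound $\sigma<N-2$ is precisely what makes $\int_{|x|\le R}(1+|x|)^{-(2+\sigma)}\,dx$ grow like $R^{N-2-\sigma}$ and thereby produces matching exponents in the second region. All estimates are uniform in $y$, so combining the three regions with the trivial bound on $\{|y|\le 1\}$ yields the global estimate \eqref{Lem6.1-0}.
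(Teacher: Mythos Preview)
Your argument is correct and complete. The paper itself does not give a proof of this lemma at all: it simply refers the reader to \cite{WY} for the proof, so in fact you have supplied more detail than the authors. Your three-region decomposition is exactly the standard way such Riesz-potential bounds are established (and is essentially what one finds in the cited reference), and you have correctly identified where each half of the hypothesis $0<\sigma<N-2$ enters.
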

We conduct a decay estimate for solutions of the eigenvalue problem \eqref{ele-2}.
\begin{lem}
Assume that $v_{i,\epsilon}$ is a solution of \eqref{ele-2}, $i\in\mathbb{N}$. Then there exists a positive constant $C$, independent of $\varepsilon$, such that
\begin{equation}\label{decay22}
v_{i,\epsilon}(x)\leq CW\Big(\frac{N(N-2)}{N(N-2)+\|u_{\epsilon}\|^{p-1}_{\infty}|x-x_{\epsilon}|^2}\Big)^{\frac{N-2}{2}}\quad \mbox{for}\quad x\in\mathbb{R}^N,
\end{equation}
for each point $x\in\Omega$, and there holds
\begin{equation}\label{day2}
|V_{i,\epsilon}(x)|\leq C\left(\frac{N(N-2)}{N(N-2)+|x|^2}\right)^{\frac{N-2}{2}}\quad \mbox{for}\quad x\in\mathbb{R}^N,
\end{equation}
where $\tilde{v}_{i,\epsilon}$ can be found in \eqref{vie}.	
\end{lem}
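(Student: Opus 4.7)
The two estimates \eqref{decay22} and \eqref{day2} are equivalent via the change of variable defining $V_{i,\epsilon}$, so the plan is to prove \eqref{day2} in the blown-up picture and read off \eqref{decay22} by reversing the rescaling.

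First I would substitute $y=\|u_\epsilon\|_\infty^{-(p-1)/2}x+x_\epsilon$ in \eqref{ele-2} and use \eqref{miu}--\eqref{miu-1} to see that $V_{i,\epsilon}$ solves
\begin{equation*}
-\Delta V_{i,\epsilon}=\lambda_{i,\epsilon}\,c_\epsilon(x)\,V_{i,\epsilon}\ \ \text{in}\ \Omega_\epsilon,\qquad V_{i,\epsilon}=0 \text{ on } \partial\Omega_\epsilon,\qquad \|V_{i,\epsilon}\|_\infty\le 1,
\end{equation*}
with $c_\epsilon(x)=p\,U_\epsilon^{p-1}(x)+\epsilon q\,\|u_\epsilon\|_\infty^{q-p}U_\epsilon^{q-1}(x)$. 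The rescaled form of the pointwise bound \eqref{dus} gives $U_\epsilon(x)\le CW(x)\lesssim (1+|x|^2)^{-(N-2)/2}$, while \eqref{2-miu} yields $\epsilon\|u_\epsilon\|_\infty^{q-p}\lesssim\|u_\epsilon\|_\infty^{-2}\to 0$. Together these show
\begin{equation*}
c_\epsilon(x)\le \frac{C}{(1+|x|^2)^2}+\frac{C}{\|u_\epsilon\|_\infty^{2}(1+|x|^2)^{(N-2)(q-1)/2}},
\end{equation*}
and the second (vanishing) piece is a lower-order perturbation under the hypothesis $q>\max\{1,(6-N)/(N-2)\}$.

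Representing $V_{i,\epsilon}$ by the Dirichlet Green's function $G_{\Omega_\epsilon}$, bounded by the Newtonian kernel $C|x-y|^{-(N-2)}$, I would set up the integral inequality
\begin{equation*}
|V_{i,\epsilon}(x)|\le C\int_{\Omega_\epsilon}\frac{|V_{i,\epsilon}(y)|}{|x-y|^{N-2}(1+|y|^2)^{2}}\,dy+\text{(lower order)},
\end{equation*}
and iterate Lemma \ref{Lem6.1}: starting from $|V_{i,\epsilon}|\le 1$, each pass upgrades a bound $|V_{i,\epsilon}(y)|\le C(1+|y|)^{-s}$ to $|V_{i,\epsilon}(x)|\le C(1+|x|)^{-(s+2)}$ whenever $s+2<N-2$. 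After finitely many iterations one reaches $|V_{i,\epsilon}(x)|\le C(1+|x|)^{-\sigma}$ for every $\sigma<N-2$. To push $\sigma$ to the critical rate $(1+|x|^2)^{-(N-2)/2}$ demanded in \eqref{day2}, I would fix $\sigma>N-4$ and feed the bound into the integral once more, splitting $\Omega_\epsilon=\{|y|\le|x|/2\}\cup\{|y|>|x|/2\}$. On the inner piece $|x-y|\ge|x|/2$ and the $y$-integral of $(1+|y|)^{-(4+\sigma)}$ is finite (since $4+\sigma>N$), producing a $C|x|^{-(N-2)}$ contribution; on the outer piece $(1+|y|)\ge(1+|x|)/2$ brings out a factor $(1+|x|)^{-(4+\sigma)}$ that is dominated by $(1+|x|)^{-(N-2)}$ for $\sigma$ close enough to $N-2$, the remaining $|x-y|^{-(N-2)}$ integral being handled via a further splitting. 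Combining yields \eqref{day2}, and undoing the rescaling gives \eqref{decay22}.

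\textbf{Main obstacle.} The delicate step is the bridging from $\sigma<N-2$ to the endpoint exponent, since Lemma \ref{Lem6.1} never reaches it. In dimensions $N\ge 5$ the splitting argument above closes cleanly, but in the low dimensions $N=3,4$ the bootstrap sits at or beyond the borderline of Lemma \ref{Lem6.1} already at the first iteration ($s+2\ge N-2$ with $s=0$), so the direct splitting has to be deployed from the outset, using the finiteness of $\int_{\mathbb R^3}(1+|y|)^{-4}\,dy$ for $N=3$ and an extra logarithmic bookkeeping for $N=4$, together with the hypothesis on $q$, so that the diameter of the expanding domain $\Omega_\epsilon$ never enters the final constant.
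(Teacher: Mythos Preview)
Your approach is essentially the same as the paper's: both rely on the Green's function representation and iterate Lemma~\ref{Lem6.1} to bootstrap the decay, then pass between \eqref{decay22} and \eqref{day2} by rescaling. The only cosmetic difference is that the paper works in the original domain $\Omega$ and iterates the bound on $v_{i,\epsilon}$ directly, while you work in the rescaled domain $\Omega_\epsilon$ with $V_{i,\epsilon}$; after scaling these are the same computation.

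If anything, your write-up is more careful than the paper's on the one genuinely delicate point you flag: Lemma~\ref{Lem6.1} requires $\sigma<N-2$, so the bootstrap only yields $|V_{i,\epsilon}(x)|\le C(1+|x|)^{-\sigma}$ for $\sigma<N-2$, and reaching the endpoint rate $(1+|x|)^{-(N-2)}$ needs the extra splitting argument you sketch. The paper simply asserts that iterating ``for finite number of times'' closes the estimate, without addressing the endpoint or the low-dimensional cases $N=3,4$ where the first iteration already sits at the borderline. Your proposal fills that gap correctly.
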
	
\begin{proof}
We have
\begin{equation}\label{e-1}
\left\lbrace
\begin{aligned}
    &-\Delta v_{i,\epsilon}=\lambda_{i,\epsilon}\big(pu_{\epsilon}^{p-1}+\epsilon qu_{\epsilon}^{q-1}\big)v_{i,\epsilon} \quad \mbox{in}\quad \Omega,\\
    &v_{i,\epsilon}=0\quad \quad \quad \quad \quad \quad \quad \quad \quad \quad \quad \hspace{6.5mm} \mbox{on}\hspace{2.5mm}\partial\Omega,\\&\|v_{i,\epsilon}\|_{\infty}=1.
   \end{aligned}
\right.
\end{equation}
Then we get from the integral representation of $v_{i,\epsilon}$
that
\begin{equation*}
v_{i,\epsilon}(x)=\lambda_{i,\epsilon}
\Big[p\int_{\Omega}u_{\epsilon}^{p-1}v_{i,\epsilon}G(x,y)dy+\epsilon q\int_{\Omega} u_{\epsilon}^{q-1}v_{i,\epsilon}G(x,y)dy\Big].
\end{equation*}
Combining \eqref{dus} and \eqref{Lem6.1-0}-\eqref{decay22}, we deduce that
\begin{equation*}
\begin{split}
&p\int_{\Omega}u_{\epsilon}^{p-1}v_{i,\epsilon}G(x,y)dy+\epsilon q\int_{\Omega} u_{\epsilon}^{q-1}v_{i,\epsilon}G(x,y)dy\\&=O\Big(\int_{\Omega}\frac{1}{|x-y|^{N-2}} W^{p-1}\Big(\big\|u_{\epsilon}\big\|_{\infty}^{\frac{p-1}{2}}(x-x_{\epsilon})\Big)dy
+\epsilon\int_{\Omega}\frac{1}{|x-y|^{N-2}} W^{q}\Big(\big\|u_{\epsilon}\big\|_{\infty}^{\frac{p-1}{2}}(x-x_{\epsilon})\Big)dy\Big)\\&
\\&= O\Big(\frac{N(N-2)}{(N(N-2)+\|u_{\epsilon}\|_{\infty}^{(p-1)/2}|x-x_{\epsilon}|)^2}\Big)
+O\Big(\frac{\epsilon}{(N(N-2)+\|u_{\epsilon}\|_{\infty}^{(p-1)/2}|x-x_{\epsilon}|)^{(n-2)q-2}}\Big),
\end{split}
\end{equation*}
As a consequence,
$$v_{i,\epsilon}= O\Big(\frac{(N(N-2)}{(N(N-2)+\|u_{\epsilon}\|_{\infty}^{(p-1)/2}|x-x_{\epsilon}|)^{2}}\Big)+O(\epsilon).$$
Next repeating the above process, we know
\begin{equation*}
v_{i,\epsilon}= O\Big(\frac{(N(N-2)}{(N(N-2)+\|u_{\epsilon}\|_{\infty}^{(p-1)/2}|x-x_{\epsilon}|)^{4}}\Big)+O(\epsilon^2).
\end{equation*}
Finally, we can proceed as in the above argument for finite number of times to  conclude that \eqref{decay22}. By scaling, we get \eqref{day2}.
\end{proof}
It is noticing that \eqref{dus} is equivalent to the estimate
\begin{equation}\label{U-day2}
|U_{\epsilon}(x)|\leq CW(x)\quad \mbox{for}\quad x\in\mathbb{R}^n.
\end{equation}
Hence combing \eqref{day2} and the standard elliptic regularity, we obtain the following result.
\begin{lem}\label{UV1}
It holds
\begin{equation}\label{UV}
U_{\epsilon}\rightarrow W(x)=\Big(\frac{1}{1+|x|^2/N(N-2)}\Big)^{\frac{N-2}{2}}\hspace{2mm}\mbox{in}\hspace{2mm} C_{loc}^{2}(\mathbb{R}^N)
\end{equation}
and there exists a $v_i$ such that $V_{i,\epsilon}\rightarrow v_{i}$ in $C_{loc}^{2}(\mathbb{R}^N)$,$i\in\mathbb{N}$ where $v_{i}$ solves \eqref{wp11}.
\end{lem}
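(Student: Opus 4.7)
\textbf{Proof proposal for Lemma \ref{UV1}.} The plan is to rewrite both $U_\epsilon$ and $V_{i,\epsilon}$ as solutions of elliptic equations on the expanding domain $\Omega_\epsilon$, observe that the $\epsilon$-perturbation terms are controlled by \eqref{2-miu}, and then apply standard elliptic regularity together with the pointwise bounds \eqref{U-day2} and \eqref{day2} to extract $C^2_{loc}(\mathbb{R}^N)$ limits. The identification of the limit for $U_\epsilon$ will then follow from the Caffarelli-Gidas-Spruck classification, while the limit $v_i$ for $V_{i,\epsilon}$ will be characterized only as a bounded solution of the linearized equation \eqref{wp11} at $W$.

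First I would carry out the scaling computation. A direct calculation using \eqref{miu} and \eqref{miu-1} shows that $U_\epsilon$ satisfies
\begin{equation*}
-\Delta U_\epsilon = U_\epsilon^{p} + \epsilon \, \|u_\epsilon\|_\infty^{-(p-q)} U_\epsilon^{q} \quad \text{in } \Omega_\epsilon,
\end{equation*}
and an analogous scaling of \eqref{e-1} combined with $\mu_\epsilon = \|u_\epsilon\|_\infty^{(p-1)/2}$ yields
\begin{equation*}
-\Delta V_{i,\epsilon} = \lambda_{i,\epsilon}\!\left(p\, U_\epsilon^{p-1} + \epsilon q\, \|u_\epsilon\|_\infty^{-(p-q)} U_\epsilon^{q-1}\right) V_{i,\epsilon} \quad \text{in } \Omega_\epsilon.
\end{equation*}
By \eqref{2-miu}, $\epsilon \|u_\epsilon\|_\infty^{-(p-q)} \lesssim \|u_\epsilon\|_\infty^{-2} \to 0$, so the lower-order terms vanish in the limit. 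Moreover, since $x_\epsilon \to x_0 \in \Omega$ and $\mu_\epsilon \to \infty$, the domains $\Omega_\epsilon$ exhaust $\mathbb{R}^N$.

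Next, I would invoke the uniform decay bounds \eqref{U-day2} and \eqref{day2}, which give $|U_\epsilon|, |V_{i,\epsilon}| \le C W$ on $\Omega_\epsilon$; in particular both sequences are uniformly bounded on every compact subset of $\mathbb{R}^N$, and the right-hand sides of their equations are uniformly bounded in $L^\infty_{loc}$. Standard interior $W^{2,r}$ estimates and bootstrapping (or Schauder theory for the rescaled equations, which have smooth coefficients) then produce uniform $C^{2,\alpha}_{loc}$ bounds. By Arzel\`a-Ascoli, along a subsequence, $U_\epsilon \to U_\infty$ and $V_{i,\epsilon} \to v_i$ in $C^2_{loc}(\mathbb{R}^N)$, and $\lambda_{i,\epsilon} \to \lambda_i$ for some $\lambda_i \ge 0$ (boundedness of $\lambda_{i,\epsilon}$ being a standard consequence of the $L^\infty$ normalization and the decay bound). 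Passing to the limit in the two equations yields
\begin{equation*}
-\Delta U_\infty = U_\infty^{p}, \quad 0 \le U_\infty \le CW, \quad U_\infty(0) = 1,
\end{equation*}
together with $-\Delta v_i = \lambda_i\, p\, W^{p-1} v_i$ on $\mathbb{R}^N$.

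Finally, to identify $U_\infty = W$, I would use the normalization $U_\epsilon(0) = 1$ (from \eqref{miu}), the positivity and decay of $U_\infty$, and the Caffarelli-Gidas-Spruck classification of positive solutions of the critical equation on $\mathbb{R}^N$; among the Aubin-Talenti family only $W[0,1]$ attains its maximum $1$ at the origin, giving $U_\infty = W$. Since the limit is unique and independent of the subsequence, the full convergence \eqref{UV} holds. For $V_{i,\epsilon}$, the limit $v_i$ is a bounded solution of the linearized problem \eqref{wp11}, and this is all that is asserted in the lemma. The main technical point is verifying that the perturbation term $\epsilon \|u_\epsilon\|_\infty^{-(p-q)}U_\epsilon^{q-1}$ in the rescaled eigenvalue equation really is negligible in $C^2_{loc}$; this is where the hypothesis $q > \max\{1, (6-N)/(N-2)\}$ enters, since it is precisely what guarantees through \eqref{2-miu} that the prefactor $\epsilon \|u_\epsilon\|_\infty^{-(p-q)}$ tends to zero and that $U_\epsilon^{q-1}$ remains uniformly integrable against the test functions used in the elliptic estimates.
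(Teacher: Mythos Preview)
Your proposal is correct and follows essentially the same route as the paper's proof: rescale to obtain the equations for $U_\epsilon$ and $V_{i,\epsilon}$ on $\Omega_\epsilon$, use the uniform pointwise bounds \eqref{U-day2} and \eqref{day2} together with standard interior elliptic regularity to extract $C^2_{loc}$ limits, and pass to the limit in the equations, identifying $U_\infty=W$ via the classification/uniqueness of positive solutions of $-\Delta W=W^p$ with the normalization $U_\epsilon(0)=1$. Your write-up is in fact more detailed than the paper's (which just invokes ``elliptic regular theory'' and ``uniqueness theorem of solutions''), and your explicit use of \eqref{2-miu} to kill the $\epsilon\|u_\epsilon\|_\infty^{-(p-q)}$ perturbation is exactly the mechanism the paper relies on implicitly.
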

\begin{proof}
We have that $0<V_{i,\epsilon}(x)\leq1$ for $x\in\Omega_{\epsilon}$ and $V_{i,\epsilon}$ satisfies
\begin{equation}\label{Wtilta}
\left\lbrace
\begin{aligned}
    &-\Delta V_{i,\epsilon}=\big(pU_{\epsilon}^{p-1}+\frac{\epsilon q}{\|u_{\epsilon}\|_{\infty}^{p-q}}U_{\epsilon}^{q-1}\big)V_{i,\epsilon} \quad \mbox{in}\quad \Omega,\\
    &V_{i,\epsilon}=0\quad \quad \quad\quad\quad\quad\quad \quad \quad \quad \quad \quad \quad \quad \quad \hspace{4mm} \mbox{on}\hspace{2.5mm}\partial\Omega,\\&\|V_{i,\epsilon}\|_{\infty}=1.
   \end{aligned}
\right.
\end{equation}
Thus there exist a function $v_{i}\in \mathcal{D}^{1,2}(\mathbb{R}^N)$ such that $V_{i,\epsilon}\rightharpoonup v_{i}$ in $\mathcal{D}^{1,2}(\mathbb{R}^N)$ as $\epsilon\rightarrow0$. Next by standard elliptic regular theory, we have $V_{i,\epsilon}\rightarrow v_{i}$ in $C_{loc}^{2}(\mathbb{R}^N)$. Passing to the limit in \eqref{Wtilta} we get $v_{i}$ satisfies \eqref{wp11}. In same argument, we can deduce that $U_{\epsilon}\rightarrow W(x)$ by combining the elliptic estimate and uniqueness theorem of solutions.
\end{proof}

\section{Asymptotic behavior of the eigenpair $(\lambda_{1,\epsilon},v_{1,\epsilon})$}\label{asymptotic}
In this section, we are devoted to prove Theorem \ref{Figalli-1}.
Before proving the main theorem, we need the following result.
\begin{Prop}\label{prondgr}
    Let $\lambda_i$, for $i=1,2,\ldots,$ denote the eigenvalues of the
     following eigenvalue problem
    \begin{equation}\label{wp11}
    -\Delta v_i
    =\lambda_ipW^{p-1}v_i\quad \mbox{in}\quad \mathbb{R}^N
    \end{equation}
for all $v_i\in \mathcal{D}^{1,2}(\mathbb{R}^N)$. Then
$\lambda_1=\frac{1}{p}$ is simple and the corresponding eigenfunction is $W(x)$, and $\lambda_{2}=\lambda_{3}=\cdots=\lambda_{N+2}=1$ with the corresponding $(N+1)$-dimensional eigenfunction space spanned by in $\mathcal{D}^{1,2}(\mathbb{R}^N)$ of the form
\begin{equation*}
\frac{x_1}{(N(N-2)+|x|^2)^{\frac{N}{2}}},\cdots,\frac{x_N}{(N(N-2)+|x|^2)^{\frac{N}{2}}}\quad\mbox{and}\quad\frac{N(N-2)-|x|^2}{(N(N-2)+|x|^2)^{\frac{N}{2}}}.
\end{equation*}
    Furthermore $\lambda_1<\lambda_{2}=\lambda_{3}=\cdots=\lambda_{N+2}<\lambda_{N+3}\leq\cdots$.
    \end{Prop}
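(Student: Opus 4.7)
The plan is to verify the claimed eigenpairs explicitly and then use a spherical-harmonic decomposition together with a monotonicity argument to rule out any further $\mathcal{D}^{1,2}$-eigenfunctions, which simultaneously establishes the spectral gap. First, the pair $(1/p,W)$ solves \eqref{wp11} simply because $-\Delta W=W^{p}=pW^{p-1}(W/p)$. Since $W>0$ and $|v|\in\mathcal{D}^{1,2}(\mathbb{R}^N)$ whenever $v\in\mathcal{D}^{1,2}(\mathbb{R}^N)$ with $\int|\nabla|v||^{2}\le\int|\nabla v|^{2}$, the weighted Rayleigh quotient
$$R(v)=\frac{\int_{\mathbb{R}^N}|\nabla v|^{2}}{p\int_{\mathbb{R}^N}W^{p-1}v^{2}}$$
attains its infimum at a positive function, so by the standard principal-eigenvalue argument $\lambda_{1}=1/p$ is simple with eigenfunction $W$, and every other eigenfunction is sign-changing and weighted-orthogonal to $W$.

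To produce the $(N+1)$ eigenfunctions at $\lambda=1$, I differentiate the identity $-\Delta W[\xi,\mu]=W[\xi,\mu]^{p}$ at $(\xi,\mu)=(0,1)$ with respect to each coordinate $\xi_{j}$ and with respect to $\mu$. Each derivative lies in $\mathcal{D}^{1,2}(\mathbb{R}^N)$ and satisfies $-\Delta v=pW^{p-1}v$, and a direct computation from \eqref{WMIU} yields, up to nonzero multiplicative constants, exactly
$$\frac{x_{j}}{(N(N-2)+|x|^{2})^{N/2}}\;(j=1,\dots,N),\qquad \frac{N(N-2)-|x|^{2}}{(N(N-2)+|x|^{2})^{N/2}},$$
which are linearly independent. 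So the eigenspace at $\lambda=1$ has dimension at least $N+1$.

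The main technical obstacle is to show that these are \emph{all} the $\mathcal{D}^{1,2}$-solutions and that no eigenvalue lies strictly in $(1/p,1)$. Decomposing $v=\sum_{k\ge0}\psi_{k}(r)Y_{k}(\theta)$ in spherical harmonics of degree $k$ (with multiplicities $d_{0}=1$, $d_{1}=N$), the eigenvalue problem splits into a family of radial ODE problems whose first eigenvalues are
$$\mu_{k}^{(1)}=\inf_{\psi}\frac{\int_{0}^{\infty}\bigl((\psi')^{2}r^{N-1}+k(k+N-2)r^{N-3}\psi^{2}\bigr)dr}{p\int_{0}^{\infty}W^{p-1}\psi^{2}r^{N-1}dr}.$$
The centrifugal term $k(k+N-2)/r^{2}$ makes $\mu_{k}^{(1)}$ strictly increasing in $k$. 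Direct verification yields $\mu_{0}^{(1)}=1/p$ (attained by $W$), $\mu_{0}^{(2)}=1$ (attained by the sign-changing dilation $Z_{0}$), and $\mu_{1}^{(1)}=1$ (attained by the positive radial profile $-W'(r)>0$ of $\partial_{x_{j}}W$, which is the first $k=1$ eigenfunction by a Perron-Frobenius argument in the $k=1$ sector). Monotonicity then forces $\mu_{k}^{(1)}>1$ for all $k\ge2$, while Sturm oscillation theory (the delicate step to execute carefully) gives $\mu_{0}^{(3)}>1$ and $\mu_{1}^{(2)}>1$, ruling out any additional eigenvalue in $[1/p,1]$.

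Finally, since $W^{p-1}\in L^{N/2}(\mathbb{R}^N)$, H\"older's and Sobolev's inequalities together with the decay of $W$ at infinity give compactness of the embedding $\mathcal{D}^{1,2}(\mathbb{R}^N)\hookrightarrow L^{2}(\mathbb{R}^N,pW^{p-1}dx)$, so \eqref{wp11} has a purely discrete spectrum $\lambda_{1}\le\lambda_{2}\le\cdots\to+\infty$. Combining the previous identifications with the min-max characterization accounts for exactly $N+2$ eigenvalues (with multiplicity) in $[1/p,1]$ and produces the asserted ordering $\lambda_{1}<\lambda_{2}=\cdots=\lambda_{N+2}<\lambda_{N+3}$.
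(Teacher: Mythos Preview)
The paper does not actually prove this proposition: it simply refers the reader to Bianchi--Egnell \cite{BE91}. Your sketch is correct and in fact reproduces the classical argument that underlies that reference (and the closely related work of Rey \cite{Rey-1990}): one checks the explicit eigenpairs by differentiating the bubble family, reduces to radial ODEs via spherical-harmonic decomposition, exploits monotonicity of the first radial eigenvalue in the angular degree $k$, and uses that $W^{p-1}\in L^{N/2}(\mathbb{R}^N)$ to obtain a compact weighted embedding and hence a discrete spectrum. The only point worth flagging is that the ``delicate step'' you acknowledge---showing $\mu_{0}^{(3)}>1$ and $\mu_{1}^{(2)}>1$---is most cleanly carried out after conformally pulling back to the sphere $S^{N}$, where the linearized operator becomes $-\Delta_{S^{N}}-N$ with its explicitly known spectrum; this is how Bianchi--Egnell bypass a direct Sturm argument, and it would make your outline fully rigorous with no additional effort.
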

	\begin{proof}
		For the derivation of conclusion are found in \cite{BE91}.
	\end{proof}
    The following result will be used to complete the proof of Theorem \ref{Figalli-1}.
    \begin{lem}[\cite{HANZCHAO}]\label{regular}
Let $u$ solve
\begin{equation*}
\begin{cases}
-\Delta u=f\quad\mbox{in}\hspace{2mm}\Omega\subset\mathbb{R}^N,\\
u=0\quad\quad\hspace{3mm}\mbox{on}\hspace{2mm}\partial\Omega.
\end{cases}
\end{equation*}
Then
\begin{equation}\label{regu}
\|u\|_{W^{1,r}(\Omega)}+\|\nabla u\|_{C^{0,\alpha}(\omega^{\prime})}\leq C\big(\|f\|_{L^1(\Omega)}+\|f\|_{L^\infty(\omega)}\big)
\end{equation}
for $r<n/(n-1)$ and $\alpha\in(0,1)$. Here $\omega$ be a neighborhood of $\partial\Omega$ and $\omega^{\prime}\subset\omega$ is a strict subdomain of $\omega$.
\end{lem}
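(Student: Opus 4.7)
The plan is to decompose the problem into two essentially independent pieces: a global $W^{1,r}$ estimate governed only by $\|f\|_{L^1(\Omega)}$, and a local $C^{1,\alpha}$ estimate on $\omega'$ governed by $\|f\|_{L^\infty(\omega)}$. To effect the decoupling I would split $f=f_{\rm in}+f_{\rm out}$ with $f_{\rm in}:=f\chi_\omega\in L^\infty(\Omega)$ and $f_{\rm out}:=f-f_{\rm in}$ supported in $\Omega\setminus\omega$, then write $u=u_{\rm in}+u_{\rm out}$ where each summand solves the Dirichlet problem with source $f_{\rm in}$ or $f_{\rm out}$ and zero boundary data. The crucial geometric observation for the second step will be that $\overline{\omega'}\subset\omega$ (a strict subdomain), so $\operatorname{supp} f_{\rm out}$ sits at a fixed positive distance from $\omega'$.

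For the global $W^{1,r}(\Omega)$ bound with $1<r<N/(N-1)$ I would use Stampacchia duality. Take the conjugate exponent $r'>N$. For an arbitrary test field $\psi\in C^\infty_c(\Omega;\mathbb{R}^N)$, let $\varphi$ solve $-\Delta\varphi=\operatorname{div}\psi$ in $\Omega$ with $\varphi=0$ on $\partial\Omega$. Calder\'on--Zygmund theory yields $\|\nabla\varphi\|_{L^{r'}}\leq C\|\psi\|_{L^{r'}}$, and since $r'>N$ Morrey embedding gives $\|\varphi\|_{L^\infty}\leq C\|\psi\|_{L^{r'}}$. Pairing and integrating by parts,
\begin{equation*}
\Big|\int_{\Omega}\nabla u\cdot\psi\,dx\Big|=\Big|\int_{\Omega}\varphi f\,dx\Big|\leq\|\varphi\|_{L^\infty}\|f\|_{L^1(\Omega)}\leq C\|\psi\|_{L^{r'}}\|f\|_{L^1(\Omega)},
\end{equation*}
so taking the supremum over such $\psi$ gives $\|\nabla u\|_{L^r(\Omega)}\leq C\|f\|_{L^1(\Omega)}$, which Poincar\'e upgrades to a $W^{1,r}$ bound.

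For the $C^{1,\alpha}(\omega')$ bound I would handle $u_{\rm in}$ and $u_{\rm out}$ separately. Since $f_{\rm in}\in L^\infty(\Omega)$ with $\|f_{\rm in}\|_{L^\infty}\leq\|f\|_{L^\infty(\omega)}$ and $\partial\Omega$ is smooth, the global $W^{2,q}$ Calder\'on--Zygmund estimate delivers $\|u_{\rm in}\|_{W^{2,q}(\Omega)}\leq C_q\|f\|_{L^\infty(\omega)}$ for every $q<\infty$; picking $q$ so large that $1-N/q>\alpha$ and invoking Morrey embedding yields $\|u_{\rm in}\|_{C^{1,\alpha}(\overline\Omega)}\leq C\|f\|_{L^\infty(\omega)}$. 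For $u_{\rm out}$, the support of $f_{\rm out}$ lies in $\Omega\setminus\omega$, which sits at a fixed positive distance from $\omega'$, so $u_{\rm out}$ is harmonic on a neighborhood of the interior part of $\omega'$ and is harmonic with zero Dirichlet data on the part of $\omega'$ meeting $\partial\Omega$. Interior and boundary Schauder estimates therefore give $\|\nabla u_{\rm out}\|_{C^{0,\alpha}(\omega')}\leq C\|u_{\rm out}\|_{L^r(\Omega)}$, and the previous paragraph applied to $u_{\rm out}$ controls this by $C\|f_{\rm out}\|_{L^1}\leq C\|f\|_{L^1(\Omega)}$.

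The main technical obstacle I anticipate is the geometry at points where $\omega'$ touches $\partial\Omega$: interior Schauder theory alone is insufficient, and one must splice it with boundary Schauder on a tubular strip near $\partial\Omega\cap\omega'$, exploiting both that $f_{\rm out}$ vanishes in an open neighborhood of this strip and that the homogeneous Dirichlet condition is smooth data. The Stampacchia step is routine given $C^2$ boundary regularity of $\Omega$, and the splitting of $f$ into $f_{\rm in}$ and $f_{\rm out}$ is clean because the strict containment $\overline{\omega'}\subset\omega$ guarantees the uniform separation needed to make $u_{\rm out}$ harmonic on a full open neighborhood of $\omega'$ inside $\Omega$.
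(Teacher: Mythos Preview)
The paper does not prove this lemma at all: it is stated with the citation \cite{HANZCHAO} and used as a black box, so there is no ``paper's own proof'' to compare against. Your proposal supplies a genuine proof where the paper offers none.

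Your argument is correct and is essentially the standard route. The Stampacchia duality step for the global $W^{1,r}$ bound ($r<N/(N-1)\Leftrightarrow r'>N$, so the dual solution is bounded via Morrey) is the classical way to handle $L^1$ data, and your splitting $f=f_{\rm in}+f_{\rm out}$ cleanly separates the $L^\infty(\omega)$ contribution (handled by global $W^{2,q}$ theory) from the $L^1$ contribution (harmonic near $\omega'$, handled by Schauder). You correctly anticipate the only real subtlety, namely that $\omega'$ meets $\partial\Omega$, so one must combine interior and boundary Schauder estimates for $u_{\rm out}$; this works because $u_{\rm out}$ is harmonic in a full $\Omega$-neighborhood of $\omega'$ with zero Dirichlet data. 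One minor clarification worth making explicit: ``strict subdomain'' here must be read as $\partial\omega'\cap\Omega$ having positive distance from $\partial\omega\cap\Omega$ (both sets share $\partial\Omega$, so $\overline{\omega'}\subset\omega$ cannot hold literally in $\mathbb{R}^N$), and this separation is exactly what your argument uses.
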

Now we are ready to prove Theorem \ref{Figalli-1}.	
\begin{proof}[Proof of Theorem \ref{Figalli-1}]
By the variational
characterization of the eigenvalue $\lambda_{1,\epsilon}$, we have
\begin{equation*}
\begin{split}
			\lambda_{1,\epsilon}&=\inf\limits_{v\in H_{0}^1(\Omega)\setminus\{0\}}\frac{\displaystyle\int_{\Omega}|\nabla v(x)|^2dx}{\displaystyle\int_{\Omega}\big(pu_{\epsilon}^{p-1}+\epsilon qu_{\epsilon}^{q-1}\big)v^2 dx}.
\end{split}
		\end{equation*}
Taking $v=u_{\epsilon}$, then we get
\begin{equation*}
\begin{split}
			\lambda_{1,\epsilon}&\leq\frac{\displaystyle\int_{\Omega}|\nabla u_{\epsilon}(x)|^2dx}{p\displaystyle\int_{\Omega}u_{\epsilon}^{p+1}dx+\epsilon q\displaystyle\int_{\Omega}u_{\epsilon}^{q+1}dx}\\&=\frac{\displaystyle\int_{\Omega_{\epsilon}}U_{\epsilon}^{p+1}dx+\epsilon \|u_{\epsilon}\|_{\infty}^{-(p-q)}\displaystyle\int_{\Omega_{\epsilon}}U_{\epsilon}^{q+1}dx}{p\displaystyle\int_{\Omega_{\epsilon}}U_{\epsilon}^{p+1}dx+\epsilon q\|u_{\epsilon}\|_{\infty}^{-(p-q)}\displaystyle\int_{\Omega_{\epsilon}}U_{\epsilon}^{q+1}dx}
=\frac{\displaystyle\int_{\mathbb{R}^N} W^{p+1}(x)dx+o(1)}{p\displaystyle\int_{\mathbb{R}^N}W^{p+1}(x)+o(1)}
\end{split}
		\end{equation*}
as $\epsilon\rightarrow0$, which means that
$$\limsup\limits_{\epsilon\rightarrow0}\lambda_{1,\epsilon}\leq\frac{1}{p}.$$
Then up to a subsequence we have $\lambda_{1,\epsilon}\rightarrow\lambda_1\in[0,1/p]$ for $\epsilon$ sufficiently small.
Moreover by Lemma \ref{identity}, up to a subsequence, there exists a function $v_0$, such that we have $V_{1,\epsilon}\rightarrow v_0$ in $C_{loc}^{1}(\mathbb{R}^n)$, where $v_0$ satisfies
\begin{equation*}
\left\lbrace
\begin{aligned}
    &-\Delta v_{0}=\lambda\big(pW^{p-1}+\epsilon qW^{q-1}\big)v_{0} \quad \mbox{in}\quad \mathbb{R}^N,\\
    &\int_{\mathbb{R}^N}|\nabla v_{0}|^2dx<\infty,\quad\|v_{0}\|_{L^{\infty}(\mathbb{R}^N)}=1.
   \end{aligned}
\right.
\end{equation*}
It follows from Proposition \ref{prondgr} that
\begin{equation}\label{e-00-1}
\lambda_1=\frac{1}{p}\quad\mbox{and}\quad v_0=W(x)=\Big(\frac{1}{1+|x|^2/N(N-2)}\Big)^{\frac{N-2}{2}},
\end{equation}
and we conclude the result \eqref{vie-1-1}.

Now we will show that $\lambda_{1,\epsilon}$ is simple. Then we may assume by contradiction that there exist at least two eigenfunctions $v_{1,\epsilon}^{(1)}$ and $v_{1,\epsilon}^{(2)}$ corresponding to $\lambda_{1,\epsilon}$ orthogonal in the space $H_{0}^{1}(\Omega)$, and by \eqref{vie-1-1} and \eqref{UV1} with the scaling, as $\epsilon$ small enough we derive that
\begin{equation*}
		\begin{split}	
\int_{\Omega}\big(pu_{\epsilon}^{p-1}+\epsilon qu_{\epsilon}^{q-1}\big)v_{1,\epsilon}^{(1)}v_{1,\epsilon}^{(2)}dx =0
\Rightarrow\int_{\mathbb{R}^N}W^{p+1}(x)dx=0,
\end{split}
\end{equation*}
a contradiction.

Finally, we prove \eqref{FINAL1}. We find
\begin{equation}\label{laplacian}
\begin{split}
-\Delta\big(\big\|u_{\epsilon}\big\|^2_{\infty}v_{1,\epsilon}\big)=&\lambda_{1,\epsilon}\big\|u_{\epsilon}\big\|^2_{\infty}\big(pu_{\epsilon}^{p-1}+\epsilon qu_{\epsilon}^{q-1}\big)v_{1,\epsilon}\quad\mbox{in}\quad \Omega.
\end{split}
\end{equation}
We integrate the right-hand side of \eqref{laplacian},
\begin{equation}\label{right}
\begin{split}
\lambda_{1,\epsilon}\big\|u_{\epsilon}\big\|^2_{\infty}\int_{\Omega}\big(pu_{\epsilon}^{p-1}+\epsilon qu_{\epsilon}^{q-1}\big)v_{1,\epsilon}=\lambda_{1,\epsilon}\int_{\Omega}\big(pU_{\epsilon}^{p-1}+\epsilon q\big\|u_{\epsilon}\big\|^{-(p-q)}_{\infty}U_{\epsilon}^{q-1}\big)V_{1,\epsilon}dy
\end{split}
\end{equation}
Therefore, combining \eqref{vie-1-1}, \eqref{U-day2} and \eqref{e-00-1} by dominated convergence, we obtain
\begin{equation*}
\begin{split}
\lim\limits_{\epsilon\rightarrow0}\int_{\Omega}\lambda_{1,\epsilon}\big\|u_{\epsilon}\big\|^2_{\infty}\int_{\Omega}\big(pu_{\epsilon}^{p-1}+\epsilon qu_{\epsilon}^{q-1}\big)v_{1,\epsilon}&=\int_{\mathbb{R}^N}W^{p}(x)dx<\infty.
\end{split}
\end{equation*}
Also using the bound \eqref{2-miu}, \eqref{U-day2} and \eqref{e-00-1} ,
we have
\begin{equation*}
\begin{split}
\lambda_{1,\epsilon}\big\|u_{\epsilon}\big\|^2_{\infty}\big(pu_{\epsilon}^{p-1}+\epsilon qu_{\epsilon}^{q-1}\big)v_{1,\epsilon}\leq
\frac{M_1}{\big\|u_{\epsilon}\big\|^{p-1}_{\infty}}\frac{1}{\big|x-x_{0}\big|^{(N-2)p}}
+\big(\frac{1}{\big\|u_{\epsilon}\big\|_{\infty}}\big)^{\frac{(N-2)(q-1)q}{2}-p+1}\frac{M_2}{\big|x-x_{0}\big|^{(N-2)q}}
\end{split}
\end{equation*}
for $x\neq x_0$ and some $M_1,~M_2>0$.
But $\frac{(N-2)(q-1)q}{2}-p+1>0$ since $q>1$, one has
$$\lambda_{1,\epsilon}\big\|u_{\epsilon}\big\|^2_{\infty}\big(pu_{\epsilon}^{p-1}+\epsilon qu_{\epsilon}^{q-1}\big)v_{1,\epsilon}\rightarrow0\hspace{2mm}\mbox{for}\hspace{2mm} x\neq x_0.$$
From here and \eqref{laplacian} we deduce that
$$
-\Delta(\big\|u_{\epsilon}\big\|^2_{\infty}v_{1,\epsilon})\rightarrow\int_{\mathbb{R}^N}W^{p}(x)dx\delta_{x=x_0}
$$
in the sense of distributions in $\Omega$ as $\epsilon\rightarrow0$. Let $\omega$ be any neighborhood of $\partial\Omega$ not containing $x_0$. By regularity theory, see Lemma \ref{regular}, we obtain
\begin{equation*}
\begin{split}
\Big\|\|u_{\epsilon}\|^2_{\infty}v_{1,\epsilon}\Big\|_{C^{1,\alpha}(\omega^{\prime})}\lesssim& \Big\|\lambda_{1,\epsilon}\big\|u_{\epsilon}\big\|^2_{\infty}\big(pu_{\epsilon}^{p-1}+\epsilon qu_{\epsilon}^{q-1}\big)v_{1,\epsilon}\Big\|_{L^1(\Omega)}\\&+\Big\|\lambda_{1,\epsilon}\big\|u_{\epsilon}\big\|^2_{\infty}\big(pu_{\epsilon}^{p-1}+\epsilon qu_{\epsilon}^{q-1}\big)v_{1,\epsilon}\Big\|_{L^\infty(\omega)}.
\end{split}
\end{equation*}
As a result,
$$ \big\|u_{\epsilon}\big\|^2_{\infty}v_{1,\epsilon}\rightarrow\int_{\mathbb{R}^N}W^{p}(x)dxG(x,x_0)
\hspace{2mm}\mbox{in}\hspace{2mm}C^{1,\alpha}(\omega)\hspace{2mm}\mbox{as}\hspace{2mm}\epsilon\rightarrow0
$$
for any subdomain $\omega$ of $\partial\Omega$ not containing $x_0$. It completes the proof.
\end{proof}

\section{Asymptotic behavior of the eigenfunctions $v_{i,\epsilon}$ for $i=2,\cdots,N+1$}\label{section4}
In this section,
we first give a limit characterization of $V_{i,\epsilon}$, which is used in the proof of Theorem \ref{Figalli}.
\begin{lem}\label{identity}
For the rescaled eigenfunctions $V_{i,\epsilon}$ defined in \eqref{vie}, we have
\begin{equation}\label{viep}
V_{i,\epsilon}(x)\rightarrow\sum_{k=1}^{N}\frac{\alpha_k^ix_k}{(N(N-2)+|x|^2)^{\frac{N}{2}}}+\beta^{i}\frac{N(N-2)-|x|^2}{(N(N-2)+|x|^2)^{\frac{N}{2}}}\quad\mbox{in}\quad C_{loc}^{1}(\mathbb{R}^N)
\end{equation}
with $\alpha^{i}=(\alpha_1^{i},\cdots,\alpha_{N}^i,\beta^{i})\neq(0,\cdots,0)$ in $\mathbb{R}^{N+1}$ for $i=2,\cdots,N+1$.
\end{lem}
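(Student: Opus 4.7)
\medskip
\noindent\textbf{Proof proposal.} The plan is to identify the limit profile $v_i$ as a bounded $\mathcal{D}^{1,2}$ solution of the linearized limit problem at $W$, pin down that the corresponding limit eigenvalue is $1$, and then invoke the classification in Proposition \ref{prondgr}. First, Lemma \ref{UV1} already supplies, up to a subsequence, a limit $V_{i,\epsilon}\to v_i$ in $C^2_{loc}(\mathbb{R}^N)$. Since $\lambda_{i,\epsilon}=O(1)$ (by the Rayleigh quotient together with the bubble-energy asymptotics) we may also assume $\lambda_{i,\epsilon}\to\lambda^{\ast}$, and the $q$-term in \eqref{Wtilta} drops out because its coefficient is $\epsilon q\|u_\epsilon\|_\infty^{-(p-q)}\to 0$ by \eqref{2-miu}. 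Hence $v_i\in\mathcal{D}^{1,2}(\mathbb{R}^N)$ solves $-\Delta v_i=\lambda^{\ast}p W^{p-1}v_i$, and $v_i\in L^\infty$ by the uniform decay \eqref{day2}.

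The second step is nontriviality and the value of $\lambda^{\ast}$. Nontriviality: any point $y_{i,\epsilon}\in\Omega_\epsilon$ realizing $\|V_{i,\epsilon}\|_\infty=1$ stays in a fixed compact set, because \eqref{day2} forces $|V_{i,\epsilon}(x)|\to 0$ uniformly as $|x|\to\infty$; passing to the limit yields $|v_i(y^{\ast})|=1$. To get $\lambda^{\ast}\le 1$, I would use the min--max characterization of $\lambda_{i,\epsilon}$ with an $i$-dimensional test subspace obtained by translating, rescaling at rate $\|u_\epsilon\|_\infty^{(p-1)/2}$ and cutting off the limit eigenfunctions $W,\partial_{x_1}W,\dots,\partial_{x_N}W$ centered at $x_\epsilon$; a direct computation of the Rayleigh quotient, using \eqref{UV} and the negligibility of the $\epsilon u_\epsilon^{q-1}$ term after rescaling, yields each ratio is $1/p+o(1)$ or $1+o(1)$, so $\lambda_{i,\epsilon}\le 1+o(1)$ for $i\le N+2$. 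To exclude $\lambda^{\ast}=1/p$, I invoke the orthogonality of $v_{i,\epsilon}$ with $v_{1,\epsilon}$ in the weighted $L^2$-space defined by $pu_\epsilon^{p-1}+\epsilon qu_\epsilon^{q-1}$: after rescaling, dominated convergence (with majorant coming from \eqref{day2} and \eqref{U-day2}) together with $V_{1,\epsilon}\to W$ from Theorem \ref{Figalli-1} yields $\int_{\mathbb{R}^N}pW^{p-1}W\,v_i\,dx=0$. If $\lambda^{\ast}=1/p$, simplicity in Proposition \ref{prondgr} would force $v_i=cW$ with $c\ne 0$, and the orthogonality would read $c\int_{\mathbb{R}^N}pW^{p+1}dx=0$, a contradiction.

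Therefore $\lambda^{\ast}=1$, and Proposition \ref{prondgr} identifies $v_i$ as an element of the $(N+1)$-dimensional eigenspace spanned by $x_k/(N(N-2)+|x|^2)^{N/2}$ and $(N(N-2)-|x|^2)/(N(N-2)+|x|^2)^{N/2}$; writing $v_i$ in this basis produces the coefficients $(\alpha_1^i,\dots,\alpha_N^i,\beta^i)$, and nontriviality $v_i\ne 0$ forces this vector to be nonzero. Finally, the $C^1_{loc}$ convergence claim follows from the $C^2_{loc}$ convergence of Lemma \ref{UV1} combined with uniqueness of the limit along the extracted subsequence (the limit being already determined by its coefficients). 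The main obstacle I anticipate is the rigorous verification of the min--max upper bound: the test functions built from $W,\partial_{x_1}W,\dots,\partial_{x_N}W$ are not orthogonal in the $\epsilon$-dependent weighted inner product, so one must carry out a Gram--Schmidt-type correction and show that the resulting cross-interactions, the cutoff boundary contributions, and the subcritical $\epsilon u_\epsilon^{q-1}$ perturbation are all $o(1)$ uniformly, using \eqref{2-miu} and the exponent restriction $q>\max\{1,(6-N)/(N-2)\}$ to absorb the subcritical term.
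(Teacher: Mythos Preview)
Your proposal is correct and follows essentially the same approach as the paper—compactness via elliptic estimates and the decay bound \eqref{day2}, identification of the limit through Proposition \ref{prondgr}, and the maximum-point argument for nontriviality. You are in fact more careful than the paper's own proof of this lemma: the paper invokes Proposition \ref{prondgr} to obtain the stated form of $v_i$ without first establishing $\lambda_i=1$ inside the proof, deferring that step to Lemmas \ref{baowenbei}--\ref{limitlama} in Section \ref{section3-1}, whereas you sketch the min--max upper bound and the orthogonality argument against $v_{1,\epsilon}$ (ruling out $\lambda^\ast=1/p$) directly.
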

\begin{proof}
We find $V_{i,\epsilon}$ solves the rescaled  problem \eqref{e-1}.
Then by elliptic estimates, up to a subsequence, \eqref{U-day2} and \eqref{day2}, there exists a function $v_i$, such that
		 $$V_{i,\epsilon}\rightarrow v_i \quad\mbox{in}\quad C^1_{loc}(\mathbb{R}^n)\quad\mbox{as}\quad\epsilon\rightarrow0.$$
Moreover, by \eqref{U-day2} and \eqref{day2}, we have
\begin{equation}\label{ccinfinity}
\int_{_{\Omega_{\epsilon}}}|\nabla V_{i,\epsilon}|^2dx\leq C\int_{_{\mathbb{R}^N}}(W^{p+1}+\epsilon W^{q+1})dx,\quad\int_{_{\Omega_{\epsilon}}}|V_{i,\epsilon}|^{p+1}dx\leq C\int_{_{\mathbb{R}^N}}W^{p+1}dx.
		\end{equation}
This implies that $V_{i,\epsilon}$ is uniformly bounded in $\mathcal{D}^{1,2}(\mathbb{R}^N)$ and so $v_{i}\in\mathcal{D}^{1,2}(\mathbb{R}^N)$.
Put $\lambda_{i,\epsilon}\rightarrow\lambda_i$ as $\epsilon\rightarrow0$ and \eqref{UV}, we obtain $v_i$ satisfies
		\begin{equation}\label{Q1}
-\Delta v_{i}=\lambda_{i}\big(pW^{p-1}+\epsilon qW^{q-1}\big)v_{i} \quad \mbox{in}\quad \mathbb{R}^N,		
		\end{equation}
and $v_i\not\equiv0$. Then, using Proposition \ref{prondgr}, we get that
\begin{equation*}
v_i(x)=\sum_{k=1}^{N}\frac{\alpha_k^ix_k}{(N(N-2)+|x|^2)^{\frac{N}{2}}}+\beta^{i}\frac{N(N-2)-|x|^2}{(N(N-2)+|x|^2)^{\frac{N}{2}}}.
\end{equation*}		
In order to conclude the proof of \eqref{viep}, it is then sufficient to get that
\begin{equation}\label{vector666}
(\alpha_1^i,\alpha_2^i,\cdots,\alpha_n^i,\beta^{i})\neq0.
\end{equation}		
Let $y_{\epsilon}^{i}$ is a maximum point of $V_{i,\epsilon}$, i.e.
$V_{i,\epsilon}(y_{\epsilon}^{i})=\|V_{i,\epsilon}\|_{\infty}=1.$
 By contradiction, if \eqref{vector666} does not hold true, then necessarily $|y_{\epsilon}^{i}|\rightarrow\infty$, hence a contradiction arise with \eqref{day2}.
	\end{proof}
\begin{lem}\label{m00}
For any $i\in\{2,\dots,N+1\}$ then there exists $\beta^{i}\neq0$ such that
\begin{equation}\label{number}
\big\|u_{\epsilon}\big\|_{\infty}^2v_{i,\epsilon}(x)\rightarrow \beta^ip\frac{\omega_N}{2}\Big(\frac{\Gamma(\frac{N}{2})\Gamma(2)-\Gamma(\frac{N}{2}+1)\Gamma(1)}{\Gamma(N/2+2)}\Big)G(x,x_0)\quad\mbox{in}\quad C^{1,\alpha}(\omega),
\end{equation}
where the convergence is in $C^{1,\alpha}(\omega)$ with $\omega$ any compact set of $\bar{\Omega}$ not containing $x_0$, $x_0$ is the limit point of $x_\epsilon$.
\end{lem}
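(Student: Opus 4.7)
The plan is to mirror the derivation of \eqref{FINAL1} in Theorem \ref{Figalli-1}. Setting $\varphi_\epsilon := \|u_\epsilon\|_\infty^2 v_{i,\epsilon}$, equation \eqref{ele-2} rewrites as $-\Delta \varphi_\epsilon = f_\epsilon$ in $\Omega$ with $\varphi_\epsilon = 0$ on $\partial\Omega$, where
\[
f_\epsilon := \lambda_{i,\epsilon}\|u_\epsilon\|_\infty^{2}\bigl(p u_\epsilon^{p-1} + \epsilon q u_\epsilon^{q-1}\bigr)v_{i,\epsilon}.
\]
I will show (a) $\int_\Omega f_\epsilon\,dx \to c_i$ where $c_i := p\beta^i \tfrac{\omega_N}{2}\tfrac{\Gamma(N/2)\Gamma(2)-\Gamma(N/2+1)\Gamma(1)}{\Gamma(N/2+2)}$, and (b) $f_\epsilon \to 0$ uniformly on any compact $\omega \subset \overline{\Omega}\setminus\{x_0\}$. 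Together these will yield $f_\epsilon \rightharpoonup c_i\,\delta_{x_0}$ in the distributional sense, and Lemma \ref{regular} applied to $\varphi_\epsilon$ then promotes this to $C^{1,\alpha}(\omega)$-convergence of $\varphi_\epsilon$ to $c_i\, G(\cdot, x_0)$, which is exactly \eqref{number}.

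For (a), the rescaling $y = \|u_\epsilon\|_\infty^{(p-1)/2}(x - x_\epsilon)$ together with the criticality $p = (N+2)/(N-2)$ reduces the first term of $\int_\Omega f_\epsilon$ to $p \int_{\Omega_\epsilon} U_\epsilon^{p-1} V_{i,\epsilon}\,dy$ with no residual $\|u_\epsilon\|_\infty$-prefactor, while the subcritical term carries a prefactor $\epsilon\|u_\epsilon\|_\infty^{q+1-2N/(N-2)}$ which, by \eqref{2-miu}, is $O(\|u_\epsilon\|_\infty^{-2}) = o(1)$. Using \eqref{UV}, \eqref{U-day2}, \eqref{day2}, Lemma \ref{identity} and dominated convergence, one obtains
\[
\int_\Omega f_\epsilon\,dx \longrightarrow p\int_{\mathbb R^N} W^{p-1}(y)\, v_i(y)\,dy.
\]
Since $W^{p-1}$ is radial while the translation modes $y_k/(N(N-2)+|y|^2)^{N/2}$ in \eqref{viep} are odd in $y_k$, parity eliminates them, leaving only the dilation mode
\[
p\int_{\mathbb R^N} W^{p-1}v_i\,dy = p\beta^i\int_{\mathbb R^N}\frac{(N(N-2))^2\bigl(N(N-2)-|y|^2\bigr)}{(N(N-2)+|y|^2)^{(N+4)/2}}\,dy.
\]
Polar coordinates and the substitution $t = |y|^2/(N(N-2))$ reduce this to a difference of two Beta integrals $\int_0^\infty t^{a-1}(1+t)^{-a-b}\,dt = \Gamma(a)\Gamma(b)/\Gamma(a+b)$ with $(a,b) = (N/2,2)$ and $(N/2+1,1)$ respectively, producing precisely $c_i$.

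For (b), the bounds \eqref{dus} and \eqref{decay22} give, for $x$ away from $x_0$, $u_\epsilon(x) \lesssim \|u_\epsilon\|_\infty^{-1}|x-x_\epsilon|^{-(N-2)}$ and $v_{i,\epsilon}(x) \lesssim \|u_\epsilon\|_\infty^{-2}|x-x_\epsilon|^{-(N-2)}$. Inserting these, the critical part of $f_\epsilon$ is $O(\|u_\epsilon\|_\infty^{-(p-1)}|x-x_\epsilon|^{-(N-2)p})$ and the subcritical part, using \eqref{2-miu} again, is $O(\|u_\epsilon\|_\infty^{p-2q-1}|x-x_\epsilon|^{-(N-2)q})$; both are uniformly $o(1)$ on $\omega$ thanks to the hypothesis $q > \max\{1,(6-N)/(N-2)\}$, which guarantees $p-2q-1 < 0$. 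With these inputs, Lemma \ref{regular} delivers uniform $C^{1,\alpha}(\omega')$-bounds for $\varphi_\epsilon$, and Ascoli--Arzel\`a plus the distributional identification complete the proof. The most delicate step is the bookkeeping of $\|u_\epsilon\|_\infty$-exponents in the subcritical contribution, which hinges crucially on the sharp bound \eqref{2-miu}; everything else is parity together with a Beta-integral evaluation.
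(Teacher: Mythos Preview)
Your proof is correct and follows essentially the same approach as the paper's own proof: both identify $-\Delta(\|u_\epsilon\|_\infty^2 v_{i,\epsilon})$ as a source $f_\epsilon$, show $\int_\Omega f_\epsilon \to c_i$ by rescaling plus dominated convergence (with parity killing the translation modes), show $f_\epsilon \to 0$ locally uniformly away from $x_0$ via \eqref{dus} and \eqref{decay22}, and then invoke Lemma~\ref{regular}. The only point you left implicit is $\lambda_{i,\epsilon}\to 1$, which the paper cites from Lemma~\ref{limitlama}; your exponent bookkeeping for the subcritical term and the Beta-integral reduction are more explicit than the paper's and are correct.
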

\begin{proof}
We now proceed similarly to the proof of \eqref{FINAL1}. It is noticing that
\begin{equation}\label{mn}
-\Delta\big(\big\|u_{\epsilon}\big\|^2_{\infty}v_{i,\epsilon}\big)=\lambda_{i,\epsilon}\big\|u_{\epsilon}\big\|^2_{\infty}\big(pu_{\epsilon}^{p-1}+\epsilon qu_{\epsilon}^{q-1}\big)v_{i,\epsilon}\quad\mbox{in}\quad \Omega.
\end{equation}
By dominated convergence, \eqref{viep} and Lemma \ref{limitlama}, we deduce that
\begin{equation}\label{idayu2}
\begin{split}
\lim\limits_{\epsilon\rightarrow0}\int_{\Omega}\lambda_{i,\epsilon}\big\|u_{\epsilon}\big\|^2_{\infty}\int_{\Omega}\big(pu_{\epsilon}^{p-1}+\epsilon qu_{\epsilon}^{q-1}\big)v_{i,\epsilon}&=\beta^ip\int_{\mathbb{R}^n}W^{p-1}(x)\frac{1-|x|^2}{(1+|x|^2)^{\frac{N}{2}}}
dx\\&=\beta^ip\frac{\omega_N}{2}\Big(\frac{\Gamma(\frac{N}{2})\Gamma(2)-\Gamma(\frac{N}{2}+1)\Gamma(1)}{\Gamma(N/2+2)}\Big).
\end{split}
\end{equation}
On the other hand, by \eqref{dus} and \eqref{decay22}, we have
\begin{equation*}
\lambda_{i,\epsilon}\big\|u_{\epsilon}\big\|^2_{\infty}\big(pu_{\epsilon}^{p-1}+\epsilon qu_{\epsilon}^{q-1}\big)v_{i,\epsilon}\rightarrow0\hspace{2mm}\mbox{for}\hspace{2mm} x\neq x_0.
\end{equation*}
It follows from \eqref{mn} and \eqref{idayu2} that
$$
-\Delta(\big\|u_{\epsilon}\big\|^2_{\infty}v_{i,\epsilon})\rightarrow\beta^ip\frac{\omega_N}{2}\Big(\frac{\Gamma(\frac{N}{2})\Gamma(2)-\Gamma(\frac{N}{2}+1)\Gamma(1)}{\Gamma(N/2+2)}\Big)\delta_{x=x_0}
$$
in the sense of distributions in $\Omega$ as $\epsilon\rightarrow0$.
Let $\omega$ be any compact set in $\bar{\Omega}$ not containing $x_0$. Combining Lemma \ref{regular}, we obtain
\begin{equation*}
\begin{split}
\Big\|\|u_{\epsilon}\|^2_{\infty}v_{i,\epsilon}\Big\|_{C^{1,\alpha}(\omega^{\prime})}\lesssim& \Big\|\lambda_{i,\epsilon}\big\|u_{\epsilon}\big\|^2_{\infty}\big(pu_{\epsilon}^{p-1}+\epsilon qu_{\epsilon}^{q-1}\big)v_{i,\epsilon}\Big\|_{L^1(\Omega)}\\&+\Big\|\lambda_{1,\epsilon}\big\|u_{\epsilon}\big\|^2_{\infty}\big(pu_{\epsilon}^{p-1}+\epsilon qu_{\epsilon}^{q-1}\big)v_{i,\epsilon}\Big\|_{L^\infty(\omega)}.
\end{split}
\end{equation*}
As a consequence,
$$\big\|u_{\epsilon}\big\|_{\infty}^2v_{i,\epsilon}\rightarrow\beta^ip\frac{\omega_N}{2}\Big(\frac{\Gamma(\frac{N}{2})\Gamma(2)-\Gamma(\frac{N}{2}+1)\Gamma(1)}{\Gamma(N/2+2)}\Big)G(x,x_0)\quad\mbox{in}\hspace{2mm}C^{1,\alpha}(\omega)$$
as $\epsilon\rightarrow0$ and the conclusion follows.
 \end{proof}

\section{Estimates for the eigenvalues $\lambda_{i,\epsilon}$ for $i=2,\dots,N+1$}\label{section3-1}
This section aims to establish the estimates of the eigenvalues $\lambda_{i,\epsilon}$ for $i=2,\dots,N+1$.	We first define
$\psi(x)=\tilde{\psi}(x-x_{\epsilon}),$
where $\tilde{\psi}$ is a function in $C_{0}^{\infty}(B_{2\delta}(0))$ such that $\tilde{\psi}\equiv1$ in $B_{\delta}(0)$, $0\leq\delta\leq1$ in $B_{2\delta}(x_{\epsilon})$, and
\begin{equation}\label{varepsilon}
\eta_{j,\epsilon}(x)=\psi(x)\frac{\partial u_{\epsilon}}{\partial x_j}(x),\quad j=1,\cdots,n,
\end{equation}
\begin{equation}\label{varep}
\eta_{N+1,\epsilon}(x)=\psi(x)\Big((x-x_{\epsilon})\cdot\nabla u_{\epsilon}+\frac{2}{p-1}u_{\epsilon}\Big):=\psi(x)w(x).
\end{equation}
We need some preliminary computations.
\begin{lem}\label{nabla}
It holds
\begin{equation*}
\begin{split}
-\Delta w(x)&=\big(pu_{\epsilon}^{p-1}+\epsilon qu_{\epsilon}^{q-1}\big)w(x)+\frac{2(p-q)}{p-1}\epsilon u_{\epsilon}^q
\end{split}
\end{equation*}
for any $y\in\mathbb{R}^N$.
\end{lem}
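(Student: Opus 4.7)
\medskip

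The statement is a direct computation, essentially a linearization of the Pohozaev multiplier identity around $u_\epsilon$. Since $w$ combines a scaling generator $(x-x_\epsilon)\cdot\nabla u_\epsilon$ with the conformal weight $\frac{2}{p-1}u_\epsilon$, I expect $w$ to nearly lie in the kernel of the linearized operator; the extra term $\frac{2(p-q)}{p-1}\epsilon u_\epsilon^q$ should appear because only the critical power $u_\epsilon^p$ is exactly invariant under the corresponding dilation, whereas the subcritical perturbation $\epsilon u_\epsilon^q$ is not.

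The plan is to compute $-\Delta w$ termwise using two elementary identities. First, for any smooth $\phi$,
\begin{equation*}
\Delta\bigl((x_j-(x_\epsilon)_j)\,\partial_j\phi\bigr)=(x_j-(x_\epsilon)_j)\,\partial_j\Delta\phi+2\,\partial_j^{\,2}\phi,
\end{equation*}
so summing in $j$ gives $\Delta\bigl((x-x_\epsilon)\cdot\nabla u_\epsilon\bigr)=(x-x_\epsilon)\cdot\nabla(\Delta u_\epsilon)+2\Delta u_\epsilon$. Second, from the equation $-\Delta u_\epsilon=u_\epsilon^{p}+\epsilon u_\epsilon^{q}$ one has
\begin{equation*}
\nabla(\Delta u_\epsilon)=-\bigl(p\,u_\epsilon^{p-1}+\epsilon q\,u_\epsilon^{q-1}\bigr)\nabla u_\epsilon.
\end{equation*}
Plugging this in yields
\begin{equation*}
\Delta\bigl((x-x_\epsilon)\cdot\nabla u_\epsilon\bigr)=-\bigl(pu_\epsilon^{p-1}+\epsilon q u_\epsilon^{q-1}\bigr)(x-x_\epsilon)\cdot\nabla u_\epsilon-2\bigl(u_\epsilon^{p}+\epsilon u_\epsilon^{q}\bigr),
\end{equation*}
and separately $\Delta\bigl(\tfrac{2}{p-1}u_\epsilon\bigr)=-\tfrac{2}{p-1}\bigl(u_\epsilon^{p}+\epsilon u_\epsilon^{q}\bigr)$.

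Next I would express $(x-x_\epsilon)\cdot\nabla u_\epsilon=w-\tfrac{2}{p-1}u_\epsilon$ so that the linear combination $-\Delta w$ can be rewritten as
\begin{equation*}
-\Delta w=\bigl(pu_\epsilon^{p-1}+\epsilon q u_\epsilon^{q-1}\bigr)w-\tfrac{2}{p-1}\bigl(pu_\epsilon^{p-1}+\epsilon q u_\epsilon^{q-1}\bigr)u_\epsilon+\Bigl(2+\tfrac{2}{p-1}\Bigr)\bigl(u_\epsilon^{p}+\epsilon u_\epsilon^{q}\bigr).
\end{equation*}
The final step is a one-line algebraic collapse: the coefficient of $u_\epsilon^{p}$ becomes $-\tfrac{2p}{p-1}+\tfrac{2p}{p-1}=0$ (this is exactly the conformal invariance of the critical nonlinearity), while the coefficient of $\epsilon u_\epsilon^{q}$ reduces to $-\tfrac{2q}{p-1}+\tfrac{2p}{p-1}=\tfrac{2(p-q)}{p-1}$, giving the claimed identity.

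There is no real obstacle here: the argument is purely algebraic and deterministic. The only point requiring a touch of care is keeping track of the two cancellations that force the critical term to vanish and isolate the subcritical residual; this is precisely where the cutoff $\psi$ in the definition of $\eta_{N+1,\epsilon}$ will later matter (to make $\eta_{N+1,\epsilon}$ admissible in $H_0^1(\Omega)$), but for the pointwise identity stated in the lemma the cutoff plays no role and the computation is performed on the smooth function $w$ itself.
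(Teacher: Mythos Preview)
Your proof is correct and follows essentially the same approach as the paper: a direct computation using the commutator identity $\Delta\bigl((x-y)\cdot\nabla u_\epsilon\bigr)=(x-y)\cdot\nabla(\Delta u_\epsilon)+2\Delta u_\epsilon$ together with the equation satisfied by $u_\epsilon$, followed by the algebraic cancellation of the critical term. If anything, your presentation is cleaner than the paper's, which carries out the same steps but with slightly heavier index notation.
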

\begin{proof}
For any $y\in\mathbb{R}^N$, we get
\begin{equation}\label{comput}
\begin{split}
-\Delta w(x)&=\frac{2p}{p-1}(-\Delta u_{\epsilon})-\sum_{j=1}^{N}\sum_{i\neq j}^{N}(x_i-y_j)\frac{\partial^3u_{\epsilon}}{\partial x_i\partial^2x_j}\\&
=\frac{2p}{p-1}\left(u_{\epsilon}^{p}+\epsilon u_{\epsilon}^{q}\right)-\sum_{j=1}^{n}\sum_{i\neq j}^{n}(x_i-y_j)\frac{\partial^3u_{\varepsilon}}{\partial x_i\partial^2x_j}.
\end{split}
\end{equation}
By a straightforward computation,
\begin{equation}\label{yield}
\begin{split}
-\sum_{j=1}^{N}\sum_{i\neq j}^{N}(x_i-y_j)\frac{\partial^3u_{\varepsilon}}{\partial x_i\partial^2x_j}
=\big(pu_{\epsilon}^{p-1}+\epsilon qu_{\epsilon}^{q-1}\big)(x-y)\cdot\nabla u_{\epsilon},
\end{split}
\end{equation}
Combining \eqref{comput} and \eqref{yield} yields the conclusion.
\end{proof}	
As a consequence of Lemma \ref{nabla}, we find the following
\begin{cor}\label{conseq}
It holds that
\begin{equation*}
\begin{split}
&\int_{\partial\Omega}((x-y),\nu)\frac{\partial u_{\epsilon}}{\partial\nu}\frac{\partial v_{i,\epsilon}}{\partial\nu}dS_x=\big(1-\lambda_{i,\epsilon}\big)\int_{\Omega}\big(pu_{\epsilon}^{p-1}+\epsilon qu_{\epsilon}^{q-1}\big)wv_{i,\epsilon}dx+\frac{2(p-q)}{p-1}\epsilon \int_{\Omega}u_{\epsilon}^qv_{i,\epsilon}dx
\end{split}
\end{equation*}
for any $y\in\mathbb{R}^N$ and for $i=2,\cdots,N+2$,
where $\nu=\nu(x)$ denotes the unit outward normal to the boundary $\partial \Omega$.
\end{cor}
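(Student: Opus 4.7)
The plan is to combine the pointwise identity of Lemma \ref{nabla} with Green's second identity and the eigenvalue equation for $v_{i,\epsilon}$. Concretely, I would multiply the identity
$$-\Delta w = \bigl(pu_\epsilon^{p-1} + \epsilon q u_\epsilon^{q-1}\bigr)\,w + \frac{2(p-q)}{p-1}\,\epsilon\, u_\epsilon^{q}$$
by $v_{i,\epsilon}$ and integrate over $\Omega$, producing a ``source'' form of the target identity in which the only work left is to rewrite $\int_\Omega (-\Delta w) v_{i,\epsilon}\,dx$ in a useful way.

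To evaluate that integral, I would apply Green's second identity,
$$\int_\Omega v_{i,\epsilon}(-\Delta w)\,dx = \int_\Omega w(-\Delta v_{i,\epsilon})\,dx + \int_{\partial \Omega} \bigl(w\, \partial_\nu v_{i,\epsilon} - v_{i,\epsilon}\,\partial_\nu w\bigr)\, dS,$$
and drop the $v_{i,\epsilon}\partial_\nu w$ boundary piece since $v_{i,\epsilon}\equiv 0$ on $\partial\Omega$. The interior term on the right is then converted using the eigenvalue equation \eqref{ele-2}, giving $\lambda_{i,\epsilon}\int_\Omega (pu_\epsilon^{p-1}+\epsilon q u_\epsilon^{q-1}) w\, v_{i,\epsilon}\,dx$.

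The final step is to identify the boundary term. Since $u_\epsilon \equiv 0$ on $\partial\Omega$, its tangential gradient vanishes there, so $\nabla u_\epsilon = (\partial_\nu u_\epsilon)\,\nu$ on $\partial\Omega$ and
$$w\big|_{\partial\Omega} = \bigl((x-y)\cdot\nabla u_\epsilon\bigr)\big|_{\partial\Omega} = \bigl((x-y)\cdot\nu\bigr)\,\partial_\nu u_\epsilon,$$
(the $\tfrac{2}{p-1}u_\epsilon$ piece of $w$ drops out on the boundary). Substituting into $\int_{\partial\Omega} w\, \partial_\nu v_{i,\epsilon}\,dS$ and collecting terms with the equation obtained above from Lemma \ref{nabla} yields exactly
$$\int_{\partial\Omega}\bigl((x-y)\cdot\nu\bigr)\,\partial_\nu u_\epsilon\,\partial_\nu v_{i,\epsilon}\,dS = (1-\lambda_{i,\epsilon})\int_\Omega (pu_\epsilon^{p-1}+\epsilon q u_\epsilon^{q-1})\, w\, v_{i,\epsilon}\,dx + \frac{2(p-q)}{p-1}\,\epsilon\int_\Omega u_\epsilon^{q}\, v_{i,\epsilon}\,dx.$$

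No substantial obstacle is anticipated: the argument is a standard Pohozaev-type integration by parts, and the identity holds for any $y\in\mathbb{R}^N$ because Lemma \ref{nabla} itself is valid for the translation parameter $y$. The only care needed is to track the signs arising from $-\Delta$ and Green's identity, and to note that all boundary manipulations are justified by classical elliptic regularity of $u_\epsilon$ and $v_{i,\epsilon}$ up to $\partial\Omega$.
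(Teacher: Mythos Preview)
Your proposal is correct and follows essentially the same route as the paper: the paper multiplies Lemma~\ref{nabla} by $v_{i,\epsilon}$, integrates by parts to get \eqref{de11}, then multiplies the eigenvalue equation by $w$ and integrates by parts (picking up the boundary term) to get \eqref{d00}, and subtracts---which is exactly your use of Green's second identity together with the boundary identification $w|_{\partial\Omega}=((x-y)\cdot\nu)\,\partial_\nu u_\epsilon$.
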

\begin{proof}
Recalling the identity in Lemma \ref{nabla}, and multiplying this identity by $v_{i,\epsilon}$ and integrating by parts, we deduce
\begin{equation}\label{de11}
			\begin{split}
\int_{\Omega}&\nabla w(x)\cdot\nabla v_{i,\epsilon}dx=\int_{\Omega}\big(pu_{\epsilon}^{p-1}+\epsilon qu_{\epsilon}^{q-1}\big)w(x)v_{i,\epsilon}dx+\frac{2(p-q)}{p-1}\epsilon \int_{\Omega}u_{\epsilon}^qv_{i,\epsilon}dx.
\end{split}
		\end{equation}
In same way, we also have
\begin{equation}\label{d00}
			\begin{split}
\int_{\Omega}&\nabla w(x)\cdot\nabla v_{i,\epsilon}dx-\int_{\partial\Omega}\Big((x-y)\cdot\nabla u_{\epsilon}\Big)\frac{\partial v_{i,\epsilon}}{\partial\nu}dS_x =\lambda_{i,\epsilon}\int_{\Omega}\big(pu_{\epsilon}^{p-1}+\epsilon qu_{\epsilon}^{q-1}\big)v_{i,\epsilon}w(x)
\end{split}
		\end{equation}
by \eqref{ele-2}. By plugging this identity with \eqref{de11}, this concludes the proof of Corollary \ref{conseq}.
\end{proof}
\begin{lem}\label{qiegao}
		Let $\{u_\epsilon\}_{\epsilon>0}$ be a least energy solutions of  \eqref{ele-1.1}. Assume that $v_{i,\varepsilon}$ is a solution \eqref{ele-2}. Then
		\begin{equation*}
\begin{split}
			\int_{\partial\Omega}\frac{\partial u_{\epsilon}}{\partial x_j}\frac{\partial v_{i,\epsilon}}{\partial\nu}dS_x=\big(1-\lambda_{i,\epsilon}\big)\int_{\Omega}\big(pu_{\epsilon}^{p-1}+\epsilon qu_{\epsilon}^{q-1}\big)\frac{\partial u_{\epsilon}}{\partial x_j}v_{i,\epsilon}dx
		\end{split}
\end{equation*}
		for any $j=1,\cdots,N$.
	\end{lem}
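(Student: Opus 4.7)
The plan is a standard Green's-identity manipulation, very much in the spirit of Corollary \ref{conseq} but using a translation-type test function instead of a dilation-type one. The starting observation is that since $u_\epsilon$ solves $-\Delta u_\epsilon = u_\epsilon^{p}+\epsilon u_\epsilon^{q}$ in $\Omega$, differentiating this equation in the variable $x_j$ produces the linearized identity
\begin{equation*}
-\Delta\!\left(\frac{\partial u_\epsilon}{\partial x_j}\right)=\big(pu_\epsilon^{p-1}+\epsilon q u_\epsilon^{q-1}\big)\frac{\partial u_\epsilon}{\partial x_j}\quad\text{in }\Omega.
\end{equation*}
So $\partial_j u_\epsilon$ satisfies the \emph{same} linearized operator as $v_{i,\epsilon}$, except without the eigenvalue factor $\lambda_{i,\epsilon}$ and without the Dirichlet boundary condition.

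First I would multiply the above equation by $v_{i,\epsilon}$ and integrate over $\Omega$, applying Green's formula. Since $v_{i,\epsilon}=0$ on $\partial\Omega$, the boundary contribution $\int_{\partial\Omega}\partial_\nu(\partial_j u_\epsilon)\,v_{i,\epsilon}\,dS_x$ vanishes, leaving
\begin{equation*}
\int_{\Omega}\nabla\!\left(\tfrac{\partial u_\epsilon}{\partial x_j}\right)\!\cdot\nabla v_{i,\epsilon}\,dx=\int_{\Omega}\big(pu_\epsilon^{p-1}+\epsilon q u_\epsilon^{q-1}\big)\frac{\partial u_\epsilon}{\partial x_j}\,v_{i,\epsilon}\,dx.
\end{equation*}
Next I would integrate by parts again, now throwing the derivative back onto $v_{i,\epsilon}$. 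This time the boundary term $\int_{\partial\Omega}(\partial_j u_\epsilon)\,\partial_\nu v_{i,\epsilon}\,dS_x$ \emph{does survive}, because $\partial_j u_\epsilon$ need not vanish on $\partial\Omega$. Invoking the eigenvalue equation $-\Delta v_{i,\epsilon}=\lambda_{i,\epsilon}(pu_\epsilon^{p-1}+\epsilon q u_\epsilon^{q-1})v_{i,\epsilon}$ to replace $-\Delta v_{i,\epsilon}$ gives
\begin{equation*}
\lambda_{i,\epsilon}\int_{\Omega}\big(pu_\epsilon^{p-1}+\epsilon q u_\epsilon^{q-1}\big)\frac{\partial u_\epsilon}{\partial x_j}\,v_{i,\epsilon}\,dx+\int_{\partial\Omega}\frac{\partial u_\epsilon}{\partial x_j}\,\frac{\partial v_{i,\epsilon}}{\partial\nu}\,dS_x=\int_{\Omega}\big(pu_\epsilon^{p-1}+\epsilon q u_\epsilon^{q-1}\big)\frac{\partial u_\epsilon}{\partial x_j}\,v_{i,\epsilon}\,dx.
\end{equation*}
Rearranging yields exactly the identity in Lemma \ref{qiegao}.

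There is really no serious obstacle here: the computation is elementary once one notices that $\partial_j u_\epsilon$ plays the role of a ``linearized solution'' with eigenvalue $1$, while $v_{i,\epsilon}$ has eigenvalue $\lambda_{i,\epsilon}$, so subtracting the two integration-by-parts identities isolates the factor $(1-\lambda_{i,\epsilon})$ together with the boundary term. The only care needed is bookkeeping the boundary contributions: the one at the first integration by parts is killed by the Dirichlet condition on $v_{i,\epsilon}$, whereas the one at the second step is preserved because $u_\epsilon$ is Dirichlet but its tangential/normal derivatives generally are not. Interior regularity of $u_\epsilon$ and $v_{i,\epsilon}$ up to the boundary (which is standard since $\kappa\in C^2(\overline\Omega)$ and $\partial\Omega$ is smooth) justifies all the integrations by parts.
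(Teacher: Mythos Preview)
Your proof is correct and follows precisely the approach the paper indicates: the paper merely says the conclusion ``can be derived by simple computations similar to \eqref{de11}--\eqref{d00}'' and omits the details, and your two integrations by parts against $\partial_{x_j}u_\epsilon$ are exactly those computations carried out with the translation-type function in place of the dilation-type $w$.
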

		\begin{proof}
The conclusion can be derived by simple computations similar to \eqref{de11}-\eqref{d00} and we omit it.	
	\end{proof}

We have the following results.
\begin{lem}
The functions
$u_{\epsilon},~\eta_{1,\epsilon},\cdots,\eta_{N+1,\epsilon}$
are linearly independent for $\epsilon$ sufficiently small.
\end{lem}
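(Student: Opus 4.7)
The plan is to argue by contradiction and reduce to the linear independence, in $\mathcal{D}^{1,2}(\mathbb{R}^N)$, of the three natural kernel directions of the limit problem $-\Delta \phi = p W^{p-1} \phi$. Suppose there exist coefficients $(a_0^\epsilon, a_1^\epsilon, \ldots, a_{N+1}^\epsilon) \neq 0$ such that, along some sequence $\epsilon \to 0$,
\begin{equation*}
a_0^\epsilon u_\epsilon(x) + \sum_{j=1}^{N+1} a_j^\epsilon \eta_{j,\epsilon}(x) \equiv 0 \qquad \text{on } \Omega.
\end{equation*}
The key observation is that after rescaling at $x_\epsilon$ with $\mu_\epsilon = \|u_\epsilon\|_\infty^{(p-1)/2}$, the three groups $u_\epsilon$, $\{\eta_{j,\epsilon}\}_{j=1}^N$, $\eta_{N+1,\epsilon}$ converge (after dividing by their natural $\|u_\epsilon\|_\infty$-power) to $W$, $\{\partial_{y_j}W\}_{j=1}^N$, and $y\cdot\nabla W + \tfrac{2}{p-1}W$ respectively. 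By Proposition~\ref{prondgr} these $N+2$ functions are linearly independent in $\mathcal{D}^{1,2}(\mathbb{R}^N)$: $W$ spans the simple eigenspace for $\lambda_1 = 1/p$, while the remaining $N+1$ functions are (up to explicit positive constants) exactly the basis of the $(N+1)$-dimensional eigenspace for $\lambda = 1$ listed in Proposition~\ref{prondgr}.

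Concretely, at $x = \mu_\epsilon^{-1} y + x_\epsilon$ a direct chain-rule computation gives $u_\epsilon(x) = \|u_\epsilon\|_\infty U_\epsilon(y)$, $\partial_{x_j} u_\epsilon(x) = \|u_\epsilon\|_\infty^{(p+1)/2}\, \partial_{y_j} U_\epsilon(y)$, and $(x-x_\epsilon)\cdot\nabla u_\epsilon(x) + \tfrac{2}{p-1} u_\epsilon(x) = \|u_\epsilon\|_\infty\bigl(y\cdot\nabla_y U_\epsilon + \tfrac{2}{p-1} U_\epsilon\bigr)$. Setting $\psi_\epsilon(y) = \psi(\mu_\epsilon^{-1} y + x_\epsilon)$ and dividing the assumed relation by $\|u_\epsilon\|_\infty$ yields
\begin{equation*}
a_0^\epsilon U_\epsilon(y) + \sum_{j=1}^N \mu_\epsilon\, a_j^\epsilon\, \psi_\epsilon(y)\, \partial_{y_j} U_\epsilon(y) + a_{N+1}^\epsilon\, \psi_\epsilon(y)\Bigl(y\cdot\nabla_y U_\epsilon + \tfrac{2}{p-1} U_\epsilon\Bigr) = 0 \quad \text{on } \Omega_\epsilon.
\end{equation*}
Since $\psi \equiv 1$ on $B_\delta(x_\epsilon)$ and $\mu_\epsilon \to \infty$, one has $\psi_\epsilon(y) = 1$ on every fixed compact subset of $\mathbb{R}^N$ once $\epsilon$ is small.

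Next, introduce the renormalized coefficients $b_0^\epsilon = a_0^\epsilon$, $b_j^\epsilon = \mu_\epsilon a_j^\epsilon$ for $j = 1,\ldots,N$, and $b_{N+1}^\epsilon = a_{N+1}^\epsilon$, and divide through by $M_\epsilon := \max_k|b_k^\epsilon|$, which is positive because the original tuple is nonzero. Extracting a subsequence gives $b_k^\epsilon/M_\epsilon \to b_k$ with $\max_k|b_k| = 1$. Using Lemma~\ref{UV1} (i.e.\ $U_\epsilon \to W$ in $C^2_{\mathrm{loc}}$) and passing to the limit on any compact set, we obtain
\begin{equation*}
b_0 W(y) + \sum_{j=1}^N b_j\, \partial_{y_j} W(y) + b_{N+1}\Bigl(y\cdot\nabla W + \tfrac{2}{p-1} W\Bigr) = 0 \quad \text{in } \mathbb{R}^N.
\end{equation*}
Proposition~\ref{prondgr} (together with the explicit proportionality of $\partial_{y_j} W$ and $y\cdot\nabla W + \tfrac{2}{p-1} W$ to the eigenfunctions listed there) forces $b_0 = b_1 = \cdots = b_{N+1} = 0$, contradicting $\max_k|b_k| = 1$.

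The main technical subtlety is the mismatched scaling of the three groups of functions: $\eta_{j,\epsilon}$ for $j = 1,\ldots,N$ is of size $\|u_\epsilon\|_\infty^{(p+1)/2}$, whereas $u_\epsilon$ and $\eta_{N+1,\epsilon}$ are only of size $\|u_\epsilon\|_\infty$. This is precisely why one cannot normalize the coefficients $a_k^\epsilon$ directly, but must first pass to the rescaled coefficients $b_k^\epsilon$ so that all three contributions appear on equal footing in the limit, allowing the linear independence of $W$, $\partial_{y_j} W$ and $y\cdot\nabla W + \tfrac{2}{p-1} W$ to conclude the argument.
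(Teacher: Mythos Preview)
Your proof is correct and takes a somewhat different route from the paper's. The paper argues in three stages: first it shows $a_{0,\epsilon}=0$ by applying $-\Delta$ to both sides on $B_\delta(x_\epsilon)$ (using the equations for the $\eta_{k,\epsilon}$ and for $u_\epsilon$) and comparing with the value at the maximum point $x_\epsilon$; then, evaluating the remaining relation at $x_\epsilon$ (where $\nabla u_\epsilon(x_\epsilon)=0$) forces $a_{N+1,\epsilon}=0$; finally it rescales $\sum_{k=1}^N a_{k,\epsilon}\partial_{x_k}u_\epsilon=0$ and uses only the linear independence of $\{\partial_{y_k}W\}_{k=1}^N$. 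By contrast, you rescale once, absorb the $\mu_\epsilon$ mismatch into renormalized coefficients $b_k^\epsilon$, and pass to the limit to reduce everything to the linear independence of the full family $W,\ \partial_{y_1}W,\ldots,\partial_{y_N}W,\ y\cdot\nabla W+\tfrac{2}{p-1}W$, which you read off Proposition~\ref{prondgr} (distinct eigenvalues $1/p$ and $1$, and the $N+1$ listed functions span the eigenspace at $\lambda=1$). The paper's approach trades that spectral input for elementary pointwise information at the maximum, while yours is more conceptual and avoids the case-by-case elimination of coefficients.
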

\begin{proof}
We now proceed similarly to \cite{GP05}. We may assume by contradiction that there exist $a_{0,\epsilon}, a_{1,\epsilon}$ $\cdots,a_{N+1,\epsilon}$ such that
\begin{equation}\label{A01}
a_{0,\epsilon}u_{\epsilon}+\sum_{k=1}^Na_{k,\epsilon}\eta_{k,\epsilon}+a_{N+1,\epsilon}\eta_{N+1,\epsilon}\equiv0\quad\mbox{and}\quad \sum_{k=1}^{N+1}a^2_{k,\epsilon}\neq0\quad\mbox{in}\quad\Omega.
\end{equation}
Without any loss of generality, we may assume that for any small, there holds
$\sum_{k=1}^{N+1}a^2_{k,\varepsilon}=1$.
We will prove that
$a_{0,\epsilon}=0$ for any $\epsilon>0$.
In fact, by contradiction, we know
$u_{\epsilon}(x)=-\sum_{k=1}^{N+1}\frac{a_{k,\epsilon}}{a_{0,\epsilon}}\eta_{k,\epsilon}.$
 It is noticing that
\begin{equation}\label{A5}
\begin{split}
-\Delta\eta_{k,\epsilon}&=\big(pu_{\epsilon}^{p-1}+\epsilon qu_{\epsilon}^{q-1}\big)\eta_{k,\epsilon}(x) \hspace{2mm}\mbox{for}\hspace{2mm}k=1,\cdots,N \hspace{2mm}\mbox{in the ball}\hspace{2mm} B_{\delta}(x_{\epsilon}).
\end{split}
\end{equation}
Owing to Lemma \ref{nabla},
\begin{equation}\label{A6}
			\begin{split}
-\Delta\eta_{N+1,\epsilon}=\big(pu_{\epsilon}^{p-1}+\epsilon qu_{\epsilon}^{q-1}\big)\eta_{N+1,\epsilon}(x)+\frac{2(p-q)}{p-1}\epsilon u_{\epsilon}^q.
\end{split}
		\end{equation}
This implies that
\begin{equation*}
\begin{split}
-\Delta\big(-\sum_{k=1}^{N+1}\frac{a_{k,\epsilon}}{a_{0,\epsilon}}\eta_{k,\epsilon}\big)=\big(pu_{\epsilon}^{p-1}+\epsilon qu_{\epsilon}^{q-1}\big)\big(-\sum_{k=1}^{N+1}\frac{a_{k,\epsilon}}{a_{0,\epsilon}}\eta_{k,\epsilon}\big)+\frac{2(p-q)}{p-1}\epsilon\big(-\frac{a_{N+1,\epsilon}}{a_{0,\epsilon}}\big) u_{\epsilon}^q.
\end{split}
\end{equation*}
Furthermore, $u_{\epsilon}$ solves \eqref{ele-1.1}, which implies
\begin{equation}\label{N+1}
\frac{a_{N+1,\epsilon}}{a_{0,\epsilon}}=\frac{(1-p)^2}{2(p-q)}u_{\epsilon}^{p-q}+(1-p)(1-q)\epsilon.
\end{equation}
On the other hand, taking $x=x_{\epsilon}$, then we have $\frac{p-1}{2}\|u_{\epsilon}\|=-\frac{a_{N+1,\epsilon}}{a_{0,\epsilon}}\|\eta_{k,\epsilon}\|$, so $\frac{a_{N+1,\epsilon}}{a_{0,\epsilon}}=-\frac{p-1}{2}<0$. This estimate gives a contradiction with \eqref{N+1}.
Inserting $x=x_{\epsilon}$ in \eqref{A01}, by virtue of $x_{\varepsilon}$ is a maximum point of $u_{\epsilon}$, we obtain
$
\frac{2a_{N+1,\epsilon}}{p-1}u_{\epsilon}(x_{\epsilon})=0,
$
and so we have $a_{N+1,\epsilon}=0$. Thus
\begin{equation}\label{afak}
\begin{split}
\sum_{k=1}^Na_{k,\epsilon}\frac{\partial U_{\epsilon}}{\partial x_j}(x)=0
\hspace{2mm}\mbox{in the ball}\hspace{2mm} B_{\delta}(x_{\epsilon}).
\end{split}
\end{equation}
Combining \eqref{UV} yields
\begin{equation*}
\sum_{k=1}^Na_{k}\frac{\partial W}{\partial x_j}(x)=0
\hspace{2mm}\mbox{in}\hspace{2mm} \mathbb{R}^N,
\end{equation*}
where $a_{k}$ is the limit of $a_{k,\epsilon}$ with $\sum_{k=1}^{N+1}a^2_{k,\epsilon}=1,$, which gives a contradiction by $\partial_{x_1}W,\cdots,\partial_{x_N}W$ are linearly independent.
\end{proof}

	\begin{lem}\label{baowenbei}
		For $i=2,\dots,N+1$, we have
		\begin{equation}\label{baowen1}
\lambda_{i,\epsilon}=1+O(\frac{M_0}{\|u_{\epsilon}\|_{\infty}^{p+1}}),
		\end{equation}
where $M_0$ is some positive dimensional constant.
\end{lem}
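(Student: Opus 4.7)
The plan is to test the eigenvalue equation for $v_{i,\epsilon}$ against the truncated derivative $\eta_{j,\epsilon}=\psi\,\partial_{x_j}u_\epsilon$ from \eqref{varepsilon}, for a suitably chosen $j$. Because $\eta_{j,\epsilon}$ is compactly supported in $\Omega$, integration by parts together with the identity $-\Delta\eta_{j,\epsilon}=(pu_\epsilon^{p-1}+\epsilon qu_\epsilon^{q-1})\eta_{j,\epsilon}+\mathcal R_{j,\epsilon}$, in which the cut-off error $\mathcal R_{j,\epsilon}=-\Delta\psi\,\partial_{x_j}u_\epsilon-2\nabla\psi\cdot\nabla\partial_{x_j}u_\epsilon$ is supported on the fixed annulus $B_{2\delta}(x_\epsilon)\setminus B_\delta(x_\epsilon)$, yields the master identity
\begin{equation*}
(\lambda_{i,\epsilon}-1)\int_\Omega(pu_\epsilon^{p-1}+\epsilon qu_\epsilon^{q-1})\eta_{j,\epsilon}v_{i,\epsilon}\,dx=\int_\Omega\mathcal R_{j,\epsilon}v_{i,\epsilon}\,dx.
\end{equation*}
For each $i\in\{2,\ldots,N+1\}$ I fix $j=j_0(i)$ with $\alpha_{j_0}^i\ne 0$ (from Lemma \ref{identity}; such $j_0$ exists because the $N$ limit vectors attached to these indices cannot all collapse to the one-dimensional $\beta$-direction that is asymptotically occupied by the $(N+2)$-th eigenpair of Theorem \ref{thmprtb}).

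For the denominator I perform the blow-up scaling $y=\|u_\epsilon\|_\infty^{(p-1)/2}(x-x_\epsilon)$ and pass to the limit with the aid of $U_\epsilon\to W$, $V_{i,\epsilon}\to v_i$, the uniform decay bounds \eqref{U-day2}--\eqref{day2}, and \eqref{2-miu} which absorbs the $\epsilon u_\epsilon^{q-1}$-term. Dominated convergence together with the parity of $\partial_{y_{j_0}}W$ produces
\begin{equation*}
\int_\Omega(pu_\epsilon^{p-1}+\epsilon qu_\epsilon^{q-1})\eta_{j_0,\epsilon}v_{i,\epsilon}\,dx=\|u_\epsilon\|_\infty^{(4-N)/(N-2)}\bigl(c\,\alpha_{j_0}^i+o(1)\bigr)
\end{equation*}
with $c>0$ a dimensional constant coming from $\int_{\mathbb R^N}W^{p-1}(\partial_{y_{j_0}}W)^2dy$; the cross-terms and the $\beta^i$-component vanish by symmetry. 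For the numerator, the $C^{1,\alpha}$-convergence \eqref{ifini} gives $\mathcal R_{j_0,\epsilon}=O(\|u_\epsilon\|_\infty^{-1})$ on its compact support, while a Green's-representation bootstrap for $v_{i,\epsilon}$, in which the monopole term $\beta^iG(\cdot,x_0)$ from Lemma \ref{m00} vanishes (since $\beta^i=0$ for $i=2,\ldots,N+1$) and the first non-vanishing contribution is the dipole $\sum_k\alpha_k^i\partial_{y_k}G(\cdot,x_0)$ of size $\|u_\epsilon\|_\infty^{-(p+3)/2}$, yields $v_{i,\epsilon}=O(\|u_\epsilon\|_\infty^{-(p+3)/2})$ uniformly on the annulus. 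Hence $|\int\mathcal R_{j_0,\epsilon}v_{i,\epsilon}\,dx|=O(\|u_\epsilon\|_\infty^{-(p+5)/2})$.

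Substituting these bounds into the master identity and using the elementary arithmetic $-(p+5)/2-(4-N)/(N-2)=-2N/(N-2)=-(p+1)$, I obtain $|\lambda_{i,\epsilon}-1|=O(\|u_\epsilon\|_\infty^{-(p+1)})$, which is the desired estimate with a suitable dimensional constant $M_0$. The principal obstacle is the refined decay $v_{i,\epsilon}=O(\|u_\epsilon\|_\infty^{-(p+3)/2})$ off the concentration point: Lemma \ref{m00} only delivers $O(\|u_\epsilon\|_\infty^{-2})$ a priori, and the sharper rate rests on showing that the monopole coefficient $\beta^i$ vanishes for $i=2,\ldots,N+1$. This cancellation follows from counting the dimension of the limit eigenspace and the mutual orthogonality of the approximate eigenfunctions, and is the structural input that separates this block of $N$ eigenvalues from $\lambda_{N+2,\epsilon}$ treated in Theorem \ref{thmprtb}; it is self-contained and does not require circularly invoking Theorem \ref{Figalli}.
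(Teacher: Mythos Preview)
Your testing identity is clean, but the argument has a genuine circularity problem. The whole scheme hinges on two pieces of structure for the limit of $V_{i,\epsilon}$, $i=2,\dots,N+1$: that some $\alpha^i_{j_0}\ne 0$ so the denominator has the size $\|u_\epsilon\|_\infty^{(4-N)/(N-2)}$, and---more critically---that $\beta^i=0$ so that on the annulus $v_{i,\epsilon}=O(\|u_\epsilon\|_\infty^{-(p+3)/2})$ rather than merely $O(\|u_\epsilon\|_\infty^{-2})$. But in the paper's logical order, $\beta^i=0$ is \emph{deduced} in Section~\ref{section6} precisely by contrasting the hypothetical rate $1-\lambda_{i,\epsilon}\sim \|u_\epsilon\|_\infty^{-2}$ (which would follow from $\beta^i\ne 0$ via Corollary~\ref{conseq}) with the bound of Lemma~\ref{baowenbei}. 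Likewise, the statement that the $\beta$-direction is ``occupied'' by $V_{N+2,\epsilon}$ is Theorem~\ref{thmprtb}, whose proof (through Lemma~\ref{ba} and Theorem~\ref{remainder terms}) also rests on Lemma~\ref{baowenbei}. Your dimension count plus mutual orthogonality only says that the $N+1$ limit vectors $(\alpha^i,\beta^i)$, $i=2,\dots,N+2$, are pairwise orthogonal in $\mathbb{R}^{N+1}$; it does \emph{not} single out which (if any) is the pure $\beta$-direction, so it cannot force $\beta^i=0$ for $i\le N+1$ without additional input.

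If you drop the unproved assumption $\beta^i=0$, the best you get on the annulus is $v_{i,\epsilon}=O(\|u_\epsilon\|_\infty^{-2})$ (Lemma~\ref{m00}), whence the numerator is $O(\|u_\epsilon\|_\infty^{-3})$ and your identity yields only $|\lambda_{i,\epsilon}-1|=O(\|u_\epsilon\|_\infty^{-2(N-1)/(N-2)})$, strictly weaker than the required $O(\|u_\epsilon\|_\infty^{-(p+1)})$. This is exactly why the paper takes a different route: it uses the variational min--max characterization \eqref{minmax} with the $i$-dimensional test space spanned by $u_\epsilon,\eta_{1,\epsilon},\dots,\eta_{i-1,\epsilon}$, which produces the upper bound $\lambda_{i,\epsilon}\le 1+M_0\|u_\epsilon\|_\infty^{-(p+1)}$ by direct computation of the Rayleigh quotient, without needing any a~priori information on the limits of the $V_{i,\epsilon}$.
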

	\begin{proof}
We proceed similarly to \cite{GP05}. We first define a linear space $\mathcal{Z}$ spanned by $\{u_{\epsilon}\}\cup\{\eta_{j,\epsilon}: 1\leq j\leq i-1\},$
so that any nonzero function $f\in\mathcal{Z}\setminus\{0\}$ can be write as
$f=a_0u_{\epsilon}+\sum_{j=1}^{i-1}a_j\eta_{j,\epsilon}.$
By the variational
characterization of the eigenvalue $\lambda_{i,\epsilon}$, we have
		\begin{equation}\label{minmax}
\begin{split}
			\lambda_{i,\epsilon}&=\min\limits_{\substack{\mathcal{Z}\subset H_{0}^1(\Omega),\\ dim\mathcal{Z}=i}}\max\limits_{f\in\mathcal{Z}\setminus\{0\}}\frac{\displaystyle\int_{\Omega}|\nabla f(x)|^2dx}{\displaystyle\int_{\Omega}\big(pu_{\epsilon}^{p-1}+\epsilon qu_{\epsilon}^{q-1}\big)f^2dx}			\\&\leq\max\limits_{f\in\mathcal{Z}\setminus\{0\}}\frac{\displaystyle\int_{\Omega}|\nabla f(x)|^2dx}{\displaystyle\int_{\Omega}\big(pu_{\epsilon}^{p-1}+\epsilon qu_{\epsilon}^{q-1}\big)f^2dx}:=\max\limits_{f\in\mathcal{Z}\setminus\{0\}}g.
\end{split}
		\end{equation}
We next write $\omega_{\epsilon}=\sum_{j=1}^{i-1}a_j\frac{\partial u_{\epsilon}}{\partial x_j}$, so we have $f=a_0u_{\epsilon}+\psi \omega_{\epsilon}$.
Multiplying \eqref{ele-1.1} by $\psi \omega_{\epsilon}$, and
\begin{equation}\label{kuangquanshui}
			\begin{split}
				-\Delta \omega_{\epsilon}=\big(pu_{\epsilon}^{p-1}+\epsilon qu_{\epsilon}^{q-1}\big)\omega_{\epsilon}
			\end{split}
		\end{equation}
by $\psi^2\omega_{\epsilon}$ and integrating we get
		\begin{equation}\label{psi-z}
\begin{split}
			\int_{\Omega}|\nabla(\psi \omega_{\epsilon})|^2dx&=\int_{\Omega}|\nabla\psi|^2\omega^2_{\epsilon}dx+p\int_{\Omega}u_{\epsilon}^{p-1}\psi^2\omega^2_{\epsilon}dx+\epsilon q\int_{\Omega}u_{\epsilon}^{q-1}\psi^2\omega^2_{\epsilon}dx.
		\end{split}
\end{equation}
Then \eqref{kuangquanshui}-\eqref{psi-z} yields
\begin{equation*}
\begin{split}			
&\max\limits_{f\in\mathcal{Z}\setminus\{0\}}g=1+\\&\max\limits_{a_0,\cdots,a_{i-1}}\frac{a_0^2\displaystyle\int_{\Omega}\big((1-p)u_{\epsilon}^{p+1}+\epsilon (1-q)u_{\epsilon}^{q+1}\big)+2a_0\displaystyle\int_{\Omega}\big((1-p)u_{\epsilon}^{p}+\epsilon (1-q)u_{\epsilon}^{q}\big)\psi\omega_{\epsilon}}{a_0^2\displaystyle\int_{\Omega}\big(pu_{\epsilon}^{p+1}+\epsilon qu_{\epsilon}^{q+1}\big)+2a_0\displaystyle\int_{\Omega}\big(pu_{\epsilon}^{p}+\epsilon qu_{\epsilon}^{q}\big)\psi\omega_{\epsilon}+\displaystyle\int_{\Omega}\big(pu_{\epsilon}^{p-1}+\epsilon qu_{\epsilon}^{q-1}\big)\psi^2\omega^2_{\epsilon}}\\&
+\max\limits_{a_0,\cdots,a_{i-1}}\frac{\displaystyle\int_{\Omega}|\nabla\psi|^2\omega^2_{\epsilon}}{a_0^2\displaystyle\int_{\Omega}\big(pu_{\epsilon}^{p+1}+\epsilon qu_{\epsilon}^{q+1}\big)+2a_0\displaystyle\int_{\Omega}\big(pu_{\epsilon}^{p}+\epsilon qu_{\epsilon}^{q}\big)\psi\omega_{\epsilon}+\displaystyle\int_{\Omega}\big(pu_{\epsilon}^{p-1}+\epsilon qu_{\epsilon}^{q-1}\big)\psi^2\omega^2_{\epsilon}}
\end{split}
		\end{equation*}
The point is to estimate all the integrals involving $\omega_{\epsilon}$.
As far as the first one is concerned, owing
to the definitions of $\omega_{\epsilon}$ and $\psi$ one has
	\begin{equation}\label{twou1}
\begin{split}
\int_{\Omega}\big(u_{\epsilon}^{p}+\epsilon u_{\epsilon}^{q}\big)\psi\omega_{\epsilon}dx&=\sum_{j,k=1}^{i-1}a_j\int_{\Omega}u_{\epsilon}^{p-1}\psi\frac{\partial u_{\epsilon}}{\partial x_j}dx+\sum_{j,k=1}^{i-1}a_j\int_{\Omega}\epsilon u_{\epsilon}^{q-1}\psi\frac{\partial u_{\epsilon}}{\partial x_j}dx\\&
=O\Big(\frac{1}{\|u_{\epsilon}\|_{\infty}^{p+1}}\Big)+O\Big(\frac{\epsilon}{\|u_{\epsilon}\|_{\infty}^{q+1}}\Big).
\end{split}
\end{equation}
Next, from \cite{GP05}, we have
\begin{equation}\label{twou2}
\int_{\Omega}|\nabla\psi|^2\omega^2_{\epsilon}dx=\sum_{j,k=1}^{i-1}a_ja_k\int_{\Omega}|\nabla\psi|^2\frac{\partial u_{\epsilon}}{\partial x_j}\frac{\partial u_{\epsilon}}{\partial x_k}dx=O\Big(\frac{1}{\|u_{\epsilon}\|_{\infty}^{2}}\Big).
\end{equation}
Finally, we compute
		\begin{equation}\label{twou}
			\begin{split}
\int_{\Omega}\big(pu_{\epsilon}^{p-1}+\epsilon qu_{\epsilon}^{q-1}\big)&\psi^2\omega^2_{\epsilon}dx=				
\sum_{j,k=1}^{i-1}a_ja_k\int_{\Omega}\big(pu_{\epsilon}^{p-1}+\epsilon qu_{\epsilon}^{q-1}\big)\psi^2\frac{\partial u_{\epsilon}}{\partial x_j}\frac{\partial u_{\epsilon}}{\partial x_k}dx\\&
=p\sum_{j,k=1}^{i-1}a_ja_k
\big\|u_{\epsilon}\big\|_{\infty}^{p-1}\psi^2(x_0)\int_{\mathbb{R}^{N}}W^{p-1}\frac{\partial W(x)}{\partial x_j}\frac{\partial W(x)}{\partial x_k}dx\\&
+\epsilon q\sum_{j,k=1}^{i-1}a_ja_k
\big\|u_{\epsilon}\big\|_{\infty}^{q-1}\psi^2(x_0)\int_{\mathbb{R}^{N}}W^{q-1}\frac{\partial W(x)}{\partial x_j}\frac{\partial W(x)}{\partial x_k}dx+o(1)\\&
=\sum_{j,k=1}^{i-1}a_ja_k\frac{\delta_j^{k}}{N}\int_{\mathbb{R}^{N}}\big(p\big\|u_{\epsilon}\big\|_{\infty}^{p-1}W^{p-1}+\epsilon q\big\|u_{\epsilon}\big\|_{\infty}^{q-1}W^{q-1}\big)\big|\nabla W\big|^2dx+o(1).
\end{split}
\end{equation}
Combining the previous estimates together, and by taking $(a_0,\cdots,a_{i-1})=\frac{1}{i-1}(0,1,\cdots,1)$, we get
\begin{equation*}
			\begin{split}
\max\limits_{f\in\mathcal{Z}\setminus\{0\}}g&\geq1+
\frac{\sum_{j,k=1}^{i-1}a_ja_k\displaystyle\int_{\Omega}|\nabla\psi|^2\frac{\partial u_{\epsilon}}{\partial x_j}\frac{\partial u_{\epsilon}}{\partial x_k}dx}{\sum_{j,k=1}^{i-1}a_ja_k\displaystyle\int_{\Omega}\big(pu_{\epsilon}^{p-1}+\epsilon qu_{\epsilon}^{q-1}\big)\psi^2\frac{\partial u_{\epsilon}}{\partial x_j}\frac{\partial u_{\epsilon}}{\partial x_k}dx}\\&=1+\frac{1}{O\big(\|u_{\epsilon}\|_{\infty}^{p+1}\big)+O\big(\epsilon\|u_{\epsilon}\|_{\infty}^{q+1}\big)}.
\end{split}
\end{equation*}
Therefore, we conclude that
\begin{equation}\label{mo0}
\max\limits_{f\in\mathcal{Z}\setminus\{0\}}g\geq1+\frac{M_1}{\|u_{\epsilon}\|_{\infty}^{p+1}}
\end{equation}
for some constant $M_1>0$. Finally, we may assume that $\sum_{j=1}^{i-1}a_j^2=1$ and $a_{0,\epsilon},\cdots,a_{i-1,\epsilon}$ be a maximizer of $\max\limits_{a_0,\cdots,a_{i-1}}g$. By exploiting \eqref{mo0} we have $a^2_{0,\epsilon}\|u_{\epsilon}\|^2_{\infty}$ is uniformly bounded for some positive constant $C$ independent of $\epsilon$. Consequently,
\begin{equation*}
\begin{split}
&\max\limits_{f\in\mathcal{Z}\setminus\{0\}}g=1\\+&\frac{\|u_{\epsilon}\|_{\infty}^{2}a_0^2\displaystyle\int_{\Omega}\big((1-p)u_{\epsilon}^{p+1}+\epsilon (1-q)u_{\epsilon}^{q+1}\big)+O\big(\frac{1}{\|u_{\epsilon}\|_{\infty}^{p-1}}\big)+O\big(\frac{\epsilon}{\|u_{\epsilon}\|_{\infty}^{q-1}}\big)+O\big(1\big)}{\|u_{\epsilon}\|_{\infty}^{2}a_0^2\displaystyle\int_{\Omega}\big(pu_{\epsilon}^{p+1}+\epsilon qu_{\epsilon}^{q+1}\big)+O\big(\frac{1}{\|u_{\epsilon}\|_{\infty}^{p-1}}\big)+O\big(\frac{\epsilon}{\|u_{\epsilon}\|_{\infty}^{q-1}}\big)+O\big(\|u_{\epsilon}\|_{\infty}^{p+1}\big)+O\big(\epsilon\|u_{\epsilon}\|_{\infty}^{q+1}\big)}\\&
\leq1+\frac{M_0}{\|u_{\epsilon}\|_{\infty}^{p+1}}
\end{split}
\end{equation*}
for some constant $M_0>0$ and the conclusion follows.
	\end{proof}
\begin{lem}\label{limitlama}
For $i=2,\cdots,N+1$. It holds
\begin{equation}\label{baowen}
\lambda_{i,\epsilon}=1+o(1)
		\end{equation}
as $\epsilon\rightarrow0$.
	\end{lem}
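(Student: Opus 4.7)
My plan is as follows. The upper estimate $\lambda_{i,\epsilon}\le 1+o(1)$ is already furnished by Lemma \ref{baowenbei}, so I only need a matching lower bound. Extracting a convergent subsequence, I intend to show that any limit point $\lambda^{\ast}$ of $\lambda_{i,\epsilon}$ must equal $1$. This will be done in two stages: first identify $\lambda^{\ast}$ as an eigenvalue of the model problem \eqref{wp11}, which by Proposition \ref{prondgr} restricts it to $\{1/p,1\}$; then use the weighted $L^{2}$-orthogonality of $v_{i,\epsilon}$ against the simple first eigenfunction $v_{1,\epsilon}$ to rule out the smaller candidate $1/p$.

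For the first stage, Lemma \ref{identity} provides a subsequence along which $V_{i,\epsilon}\to v_i$ in $C^{1}_{\mathrm{loc}}(\mathbb{R}^{N})$ with $v_i\in\mathcal{D}^{1,2}(\mathbb{R}^{N})$ nontrivial, and $\lambda_{i,\epsilon}\to\lambda^{\ast}$. Passing to the limit in the rescaled equation \eqref{Wtilta}, I use $U_{\epsilon}\to W$ from Lemma \ref{UV1} together with the decay $\epsilon\|u_{\epsilon}\|_{\infty}^{q-p}\to 0$ guaranteed by \eqref{2-miu}, so that $v_i$ solves $-\Delta v_i=\lambda^{\ast}pW^{p-1}v_i$ on $\mathbb{R}^{N}$. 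By Proposition \ref{prondgr}, the eigenvalues of this model problem not exceeding $1$ are exactly $1/p$ and the $(N+1)$-fold value $1$, so $\lambda^{\ast}\in\{1/p,1\}$.

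To exclude $\lambda^{\ast}=1/p$, I will invoke self-adjointness: since $\lambda_{1,\epsilon}\to 1/p$ by Theorem \ref{Figalli-1} while $\lambda_{i,\epsilon}\le 1+o(1)$, for $\epsilon$ small the two eigenvalues are distinct, and testing each eigenfunction equation against the other and subtracting yields
\begin{equation*}
\int_{\Omega}\bigl(pu_{\epsilon}^{p-1}+\epsilon q u_{\epsilon}^{q-1}\bigr)\,v_{1,\epsilon}\,v_{i,\epsilon}\,dx=0.
\end{equation*}
After the change of variable \eqref{vie} and division by $\|u_{\epsilon}\|_{\infty}^{p-1}$, the principal integrand reads $(pU_{\epsilon}^{p-1}+\epsilon q\|u_{\epsilon}\|_{\infty}^{q-p}U_{\epsilon}^{q-1})V_{1,\epsilon}V_{i,\epsilon}$; by \eqref{U-day2} and \eqref{day2} it is pointwise dominated by a multiple of $W^{p+1}$ (with the lower-order piece killed by $\epsilon\|u_{\epsilon}\|_{\infty}^{q-p}\to 0$ and $W^{q+1}\in L^{1}(\mathbb{R}^{N})$ under the standing range $q>\max\{1,(6-N)/(N-2)\}$), so dominated convergence together with $V_{1,\epsilon}\to W$ (Theorem \ref{Figalli-1}) yields
\begin{equation*}
p\int_{\mathbb{R}^{N}}W^{p-1}\,W\,v_i\,dx=0.
\end{equation*}
If $\lambda^{\ast}=1/p$, then by the simplicity statement in Proposition \ref{prondgr} one has $v_i=cW$ with $c\neq 0$, turning the identity into $cp\int_{\mathbb{R}^{N}}W^{p+1}\,dx=0$, a contradiction. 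Hence $\lambda^{\ast}=1$, and since the limit is independent of the extracted subsequence, $\lambda_{i,\epsilon}\to 1$. The only delicate step is the rescaling and dominated convergence in the orthogonality identity; however the decay envelopes already recorded in Section \ref{section2} give the required integrable domination with room to spare, so this step is quantitative rather than a genuine obstacle.
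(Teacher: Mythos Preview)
Your argument is correct and follows essentially the same route as the paper. The paper's proof is very terse: it notes that, given the upper bound from Lemma~\ref{baowenbei}, it suffices to show $\lambda_{2,\epsilon}\to 1$ (the paper writes ``$\to 0$'', an obvious typo) and then simply says this ``follows by similarly to analysis of the proof of Lemma~\ref{identity}'', i.e.\ the rescaling-and-limit argument you spell out. Your write-up makes explicit the one point the paper leaves implicit, namely the weighted orthogonality against $v_{1,\epsilon}$ used to exclude $\lambda^\ast=1/p$. One small remark on your justification of that orthogonality: the sentence ``since $\lambda_{1,\epsilon}\to 1/p$ while $\lambda_{i,\epsilon}\le 1+o(1)$, for $\epsilon$ small the two eigenvalues are distinct'' does not by itself force $\lambda_{1,\epsilon}\neq\lambda_{i,\epsilon}$; the clean reason is the simplicity of $\lambda_{1,\epsilon}$ stated in Theorem~\ref{Figalli-1} (or, equivalently, the standard convention that the $v_{k,\epsilon}$ are chosen orthogonal in the weighted $L^2$ sense).
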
	
\begin{proof}
In view of \eqref{baowen1}, to conclude the proof of \eqref{baowen}, it is sufficient to prove $\lambda_{2,\epsilon}\rightarrow0$ as $\epsilon\rightarrow0$. The conclusion follows by similarly to analysis of the proof of Lemma \ref{identity}.
\end{proof}

\section{Proof of Theorem \ref{Figalli}}\label{section6}
We are now ready to prove \eqref{vie-1})in Theorem \ref{Figalli}.
\begin{proof}[Proof of Theorem \ref{Figalli}]
In order to conclude the proof of \eqref{vie-1}, it is then sufficient to get that $\beta^{i}=0$ in \eqref{identity} for any $i=\{2,\cdots,N+1\}$. To get this,
we use \eqref{ifini} and \eqref{number} to write
\begin{equation*}
\begin{split}	&\frac{\partial}{\partial\nu}\big(\big\|u_{\epsilon}\big\|_{\infty}^2u_{\epsilon}\big)\frac{\partial}{\partial\nu}\big(\big\|u_{\epsilon}\big\|_{\infty}^2v_{i,\epsilon}\big) \\&\rightarrow\beta^i\frac{p\omega_N^2}{2N}(N(N-2))^{\frac{N}{2}}\Big(\frac{\Gamma(\frac{N}{2})\Gamma(2)-\Gamma(\frac{N}{2}+1)\Gamma(1)}{\Gamma(N/2+2)}\Big)\big(\frac{\partial G}{\partial\nu}(x,x_{0})\big)^2:=R_N\beta^i\big(\frac{\partial G}{\partial\nu}(x,x_{0})\big)^2
\end{split}
\end{equation*}
uniformly on $\partial\Omega$. Furthermore, since the fact that (see \cite{BP})
\begin{equation}\label{GX0}
\int_{\partial\Omega}\frac{\partial G(x,x_0)}{\partial \nu}\frac{\partial G(x,x_0)}{\partial \nu}(\nu,x-x_0)dS_{x}=-(N-2)\phi(x_0).
\end{equation}
Then, taking $y=x_{\epsilon}$ in the left hand side of Corollary \ref{conseq}, we have
\begin{equation*}
\begin{split}
\int_{\partial\Omega}((x-x_{\epsilon}),\nu)\frac{\partial u_{\epsilon}}{\partial\nu}\frac{\partial v_{i,\epsilon}}{\partial\nu}dS_{x}
=-\frac{(N-2) R_N\beta^{i}}{\|u_{\epsilon}\|^{3}_{\infty}}\big(\phi(x_0)+o(1)\big)
\end{split}
\end{equation*}
as $\epsilon\rightarrow0$. On the other hand, by taking $y=x_{\epsilon}$ in the right hand side of Corollary \ref{conseq}, the oddness of the integrand and by the fact that
$$\big(y\cdot\nabla U_{\epsilon}+\frac{2}{p-1}U_{\epsilon}\big)\rightarrow\frac{N-2}{2}\frac{N(N-2)-|y|^2}{(N(N-2)+|y|^2)^{\frac{N}{2}}}\hspace{2mm}\mbox{in}\hspace{2mm}C_{loc}^{1}(\mathbb{R}^N),$$
we obtain
\begin{equation*}
\begin{split}
&\big(1-\lambda_{i,\epsilon}\big)\int_{\Omega}\big(pu_{\epsilon}^{p-1}+\epsilon qu_{\epsilon}^{q-1}\big)wv_{i,\epsilon}dx\big((x-x_{\epsilon})\cdot\nabla u_{\epsilon}+\frac{2}{p-1}u_{\epsilon}\big)dx\\&
=\frac{(1-\lambda_{i,\epsilon})(N+2)}{2\|u_{\epsilon}\|_{\infty}}
\beta^{i}\int_{\mathbb{R}^N}W^{p-1}(y)\Big(\frac{N(N-2)-|y|^2}{(1+|y|^2)^{\frac{N}{2}}}\Big)^2dy\\&\hspace{3mm}+\epsilon q\frac{(1-\lambda_{i,\epsilon})(N-2)}{2\|u_{\epsilon}\|_{\infty}^{p+1-q}}\beta^{i}\int_{\mathbb{R}^N}W^{q-1}(y)\Big(\frac{N(N-2)-|y|^2}{(N(N-2)+|y|^2)^{\frac{N}{2}}}\Big)^2dy+o(1)
\hspace{2mm}\mbox{as}\hspace{2mm}\epsilon\rightarrow0,
\end{split}
\end{equation*}
and
\begin{equation*}
\begin{split}
\frac{2(p-q)}{p-1}\epsilon \int_{\Omega}u_{\epsilon}^qv_{i,\epsilon}dx=\frac{2(p-q)}{p-1}\frac{\epsilon}{\|u_{\epsilon}\|_{\infty}^{p+1-q}}\beta^{i}\int_{\mathbb{R}^N}W^{q}(y)\frac{N(N-2)-|y|^2}{(N(N-2)+|y|^2)^{\frac{N}{2}}}dy+o(1).
\end{split}
\end{equation*}
Moreover, due to \eqref{Fn},
\begin{equation*}
\lim\limits_{\epsilon\rightarrow0}\frac{\epsilon}{\|u_\epsilon\|_{\infty}^{p+1-q}}=\frac{B_{p,q}\phi(x_0)}{\|u_\epsilon\|_{\infty}^3},
\end{equation*}
where $$B_{p,q}=\frac{2q}{p-q+1}\frac{(N(N-2))^{\frac{N}{2}}\omega_N}{N^2}\frac{\Gamma(\frac{(N-2)(q+1)}{2})}{\Gamma(\frac{N}{2})\Gamma(\frac{(N-2)(q+1)-N}{2})}.$$
Combining all this together and if $\beta^{i}\neq0$, we get that
\begin{equation*}
\begin{split}
\frac{1}{\|u_{\epsilon}\|^{2}_{\infty}}&\bigg[-(N-2)R_N\phi(x_0)-\frac{2(p-q)}{p-1}B_{p,q}\phi(x_0)\int_{\mathbb{R}^N}W^{q}(y)\frac{N(N-2)-|y|^2}{(N(N-2)+|y|^2)^{\frac{N}{2}}}dy+o(1)\bigg]\\&
=(1-\lambda_{i,\epsilon})\Big[\int_{\mathbb{R}^N}W^{p-1}(y)\Big(\frac{N(N-2)-|y|^2}{(N(N-2)+|y|^2)^{\frac{N}{2}}}\Big)^2dy\\&\hspace{3mm}+\frac{(N-2)B_{p,q}\phi(x_0)}{2\|u_{\epsilon}\|^{2}_{\infty}}\int_{\mathbb{R}^N}W^{q-1}(y)\Big(\frac{N(N-2)-|y|^2}{(N(N-2)+|y|^2)^{\frac{N}{2}}}\Big)^2dy\Big]
\end{split}
\end{equation*}
as $\epsilon\rightarrow0$. Next we compute two integrations
\begin{equation*}
\begin{split}
\mathcal{E}_N=:\int_{\mathbb{R}^N}W^{q}\frac{N(N-2)-|y|^2}{(N(N-2)+|y|^2)^{\frac{N}{2}}}dy&=\omega_N[N(N-2)]^{-\frac{(N-2)(q-1)}{2}}\frac{(N-2)(q-1)-N}{2}\\&\hspace{3mm}\times\Gamma(\frac{(N-2)q-2}{2})\Gamma(\frac{N}{2}),
\end{split}
\end{equation*}
and
\begin{equation*}
\begin{split}
\int_{\mathbb{R}^N}W^{q-1}&\frac{(N(N-2)-|y|^2)^2}{(N(N-2)+|y|^2)^{N}}dy=\omega_N[N(N-2)]^{-\frac{(N-2)(q-1)+N-4}{2}}\Gamma^2(\frac{N}{2})\\&\hspace{2mm}\times\frac{\Gamma(\frac{(N-2)(q-1)+N}{2})-N\Gamma(\frac{(N-2)(q-1)+N-2}{2})+\frac{N(N+2)}{4}\Gamma(\frac{(N-2)(q-1)+N-2}{2})}{2\Gamma(\frac{(N-2)(q-1)+2N}{2})}.
\end{split}
\end{equation*}
Hence we eventually deduce that
\begin{equation}\label{refor}
1-\lambda_{i,\varepsilon}=\frac{1}{\|u_{\epsilon}\|_{\infty}^{2}}\bigg[\frac{(N-2)R_N|\phi(x_0)|+\frac{2(p-q)}{p-1}B_{p,q}|\phi(x_0)|\mathcal{E}_N}{\mathcal{F}_N}+o(1)
\bigg]
\end{equation}
as $\varepsilon\rightarrow0$, where
$$\mathcal{F}_N=:\int_{\mathbb{R}^N}W^{p-1}(x)\big(\frac{1-|x|^2}{(1+|x|^2)^{\frac{N}{2}}}\big)^2dx=\frac{\pi^{\frac{N}{2}}}{2^{N+1}}>0.$$
We consider four cases, depending whether RHS of \eqref{refor} is negative or not.
\begin{itemize}
\item[Case $1$.]
If $N\geq4$ and $1<q\leq3$, then since $R_N<0$ and $\mathcal{E}_N<0$, we get
$$1-\lambda_{i,\varepsilon}=\frac{1}{\|u_{\epsilon}\|_{\infty}^{2}}\big((N-2)R_N+\frac{2(p-q)}{p-1}B_{p,q}\mathcal{E}_N+o(1)\big)<0\hspace{2mm}\mbox{as}\hspace{2mm}\epsilon\rightarrow0.$$
\item[Case $2$.]
If $N=3$ and $3<q<4$, then since $R_N<0$ and $\mathcal{E}_N<0$, we get
$$1-\lambda_{i,\varepsilon}=\frac{1}{\|u_{\epsilon}\|_{\infty}^{2}}\big((N-2)R_N+\frac{2(p-q)}{p-1}B_{p,q}\mathcal{E}_N+o(1)\big)<0\hspace{2mm}\mbox{as}\hspace{2mm}\epsilon\rightarrow0.$$
\item[Case $3$.]
If $N=3$ and $q=4$, then since $R_N<0$ and $\mathcal{E}_N=0$, we get
$$1-\lambda_{i,\varepsilon}=\frac{1}{\|u_{\epsilon}\|_{\infty}^{2}}\big((N-2)R_N+\frac{2(p-q)}{p-1}B_{p,q}\mathcal{E}_N+o(1)\big)<0\hspace{2mm}\mbox{as}\hspace{2mm}\epsilon\rightarrow0.$$
\item[Case $4$.]
If $N=3$ and $4<q<5$, we note that $\mathcal{E}_N>0$, and combing $R_N<0$, we claim that
\begin{equation*}
\begin{split}
(N-2)R_N+\frac{2(p-q)}{p-1}B_{p,q}\mathcal{E}_N=\frac{2q(5-q)(q-4)}{6-q}\frac{\Gamma(\frac{q+1}{2})\pi^3}{(\sqrt{3})^q\Gamma(3/2)}-\frac{2\pi^{3}}{\sqrt{3}\Gamma^2(3/2)}<0.
\end{split}
\end{equation*}
\end{itemize}
Indeed, we write
$$f(q):=\mathcal{Y}(q)\frac{\Gamma(\frac{q+1}{2})\pi^3}{(\sqrt{3})^q\Gamma(3/2)}-\frac{2\pi^{3}}{\sqrt{3}\Gamma^2(3/2)}\hspace{2mm}\mbox{with}\hspace{2mm}\mathcal{Y}(q):=\frac{2q(5-q)(q-4)}{6-q}.$$
It is easy to check that $\mathcal{Y}(q)\leq\mathcal{Y}(q_0)$ where $q_0\approx4.605$ for $4<q<5$, so that $f(q_0)<0$ in $q\in(4,5)$. Hence the claim holds.

In conclusion,
\begin{equation*}
1-\lambda_{i,\varepsilon}=\frac{1}{\|u_{\epsilon}\|_{\infty}^{2}}\bigg[\frac{(N-2)R_N|\phi(x_0)|+\frac{2(p-q)}{p-1}B_{p,q}|\phi(x_0)|\mathcal{E}_N}{\mathcal{F}_N}+o(1)
\bigg]<0\hspace{2mm}\mbox{as}\hspace{2mm}\epsilon\rightarrow0.
\end{equation*}
On the other hand, owing to \eqref{baowen1}, one has
$$
1-\lambda_{i,\epsilon}\geq\frac{C_N}{\|u_{\epsilon}\|_{\infty}^{\frac{2N}{N-2}}}\quad \mbox{for}\hspace{2mm}\epsilon\hspace{2mm}\mbox{sufficiently small},
$$
and some constant $C_N>0$.
Thus we clearly get a contradiction and concluding the proof.
\end{proof}

Before starting the proof of \eqref{vie-2} in Theorem \ref{Figalli}.
Then following the work of Takahashi \cite{T} for unique solvability result, we can introduce the following initial value problem of the linear first order PDE.
\begin{lem}\label{Ve}
Let some vectors $\alpha^{i}=(\alpha_1^{i},\cdots,\alpha_{N}^i)\neq0$ in $\mathbb{R}^N$ and $\Gamma_{\alpha}=\{x\in\mathbb{R}^N:\alpha\cdot z=0\}$. Then there exists a unique solution $u$ of the following initial value problem
\begin{equation}\label{kapax}
\left\lbrace
\begin{aligned}
    &\sum_{k=1}^{N}\alpha_{k}^{i}\frac{u(x)}{\partial x_k}=f,\\
    &u\big|_{\Gamma_{\alpha}}=g.
   \end{aligned}
\right.
\end{equation}
More precisely, this solution is obtained as
\begin{equation*}
u(x)=\int_{0}^{\varphi(x)}f(\tau\alpha+\gamma(\xi(x)))d\tau+g(\gamma(\xi(x))),\hspace{2mm}x\in\mathbb{R}^N
\end{equation*}
where
$$\varphi(x)=\frac{\alpha\cdot x}{|\alpha|^2},\quad\xi(x)=(\xi_1(x),\cdots,\xi_N(x)),$$
$$\xi_{j}(x)=\frac{|\alpha|^2x_j-(\alpha\cdot x)\alpha_j}{|\alpha|^2},\quad(j=1,\cdots,N-1),$$
$$\gamma(s)=(s,-\frac{1}{\alpha_N}\sum_{j=1}^{N-1}\alpha_js_j),\quad s=(s_1,\cdots,s_{N-1})\in\mathbb{R}^{N-1}$$
if we assume $\alpha_N\neq0$. Furthermore, if $f(x)=O(|x|^{\rho})$, $g(x)=O(|x|^{\rho})$ as $|x|\rightarrow\infty$, then $u(x)=O(|x|^{\rho+1})$ as $|x|\rightarrow\infty$.
\end{lem}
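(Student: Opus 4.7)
The statement is a standard result from the method of characteristics for a linear first-order PDE with constant coefficients; my plan is to introduce adapted coordinates along and transverse to the vector $\alpha := (\alpha_1^i,\dots,\alpha_N^i)$ and reduce the problem to a one-parameter family of ODEs along the characteristic lines.

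First, I would observe that the left-hand side of \eqref{kapax} is the directional derivative of $u$ in the direction $\alpha$, whose integral curves are the lines $\tau \mapsto \tau\alpha + x_0$. Since $\alpha$ is transverse to $\Gamma_\alpha$, each such line meets $\Gamma_\alpha$ exactly once, so every $x \in \mathbb{R}^N$ admits a unique decomposition $x = \varphi(x)\alpha + \pi(x)$ with $\pi(x) \in \Gamma_\alpha$, and one computes $\varphi(x) = \alpha \cdot x / |\alpha|^2$ directly from $\alpha \cdot \pi(x) = 0$. After relabeling coordinates so that $\alpha_N \neq 0$ (which is harmless since $\alpha \neq 0$), I parametrize $\Gamma_\alpha$ globally by the linear map $\gamma \colon \mathbb{R}^{N-1} \to \Gamma_\alpha$ given in the statement, and let $s = \xi(x) \in \mathbb{R}^{N-1}$ be the unique parameter such that $\gamma(s) = \pi(x)$; this matches the explicit formula in the statement, with the convention that the $N$-th component is determined by the first $N-1$ via the linear constraint $\alpha \cdot y = 0$.

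Second, I would change variables $(\tau, s) \mapsto \tau\alpha + \gamma(s)$, a smooth bijection of $\mathbb{R} \times \mathbb{R}^{N-1}$ onto $\mathbb{R}^N$, and set $\tilde{u}(\tau,s) := u(\tau\alpha + \gamma(s))$. The chain rule turns \eqref{kapax} into the transport equation
\[
\partial_\tau \tilde{u}(\tau,s) = f(\tau\alpha + \gamma(s)), \qquad \tilde{u}(0,s) = g(\gamma(s)),
\]
whose unique solution is
\[
\tilde{u}(\tau,s) = \int_0^\tau f(\sigma\alpha + \gamma(s))\, d\sigma + g(\gamma(s)).
\]
Substituting $\tau = \varphi(x)$ and $s = \xi(x)$ yields the claimed representation. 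Uniqueness in the original PDE follows at once: the difference $w$ of two solutions has $\tilde{w}$ satisfying the homogeneous transport equation with zero initial data, hence $\tilde{w} \equiv 0$.

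Finally, for the growth estimate, I note that $|\varphi(x)| \le |x|/|\alpha|$ by Cauchy--Schwarz, and $|\gamma(\xi(x))| = |\pi(x)| \le |x|$ since $\pi$ is the orthogonal projection onto $\Gamma_\alpha$. Hence for every $\sigma$ between $0$ and $\varphi(x)$ we have $|\sigma\alpha + \gamma(\xi(x))| \lesssim |x|$, so the integrand is $O(|x|^\rho)$; the length of the integration interval is $|\varphi(x)| = O(|x|)$, producing one extra factor of $|x|$, while the boundary term $g(\gamma(\xi(x))) = O(|x|^\rho)$ is strictly lower order. Combining gives $u(x) = O(|x|^{\rho+1})$ as $|x| \to \infty$. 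The main obstacle is purely bookkeeping: reconciling the two uses of $\xi$ in the statement (as an $\mathbb{R}^N$-vector in one place and as the $(N-1)$-parameter of $\gamma$ in another) and keeping track of constants in the growth estimate; neither is conceptually delicate once the characteristic coordinates are made explicit.
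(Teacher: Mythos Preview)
Your argument is correct and complete: the method of characteristics is exactly the right tool here, and your growth estimate via $|\varphi(x)|\le |x|/|\alpha|$ and $|\pi(x)|\le |x|$ (orthogonal projection) is clean. The paper does not give its own proof; it simply cites Lemma~2.4 of Takahashi \cite{T}, so what you have written is in fact more detailed than what the paper provides, and almost certainly coincides with the argument in the cited reference.
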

\begin{proof}
The unique solvability result follows from Lemma 2.4 in \cite{T}.
\end{proof}

\begin{lem}\label{6.3lemma}
Let $q$ and $N$ satisfying the condition \eqref{qN}, $\alpha^{i}=(\alpha_1^{i},\cdots,\alpha_{N}^i)\neq(0,\cdots,0)$ in \eqref{viep} and $\beta=0$ for $i=2,\cdots,N+1$. Then
\begin{equation*}
\big\|u_{\epsilon}\big\|_{\infty}^{\frac{p+3}{2}}v_{i,\epsilon}=\frac{1}{N-2}\int_{\mathbb{R}^N}W^pdy\sum_{k=1}^{N}\alpha_{k}^{i}\left(\frac{\partial}{\partial y_k}G\left(x,y\right)\right)\Big|_{y=x_{0}}
+o(1)\quad\mbox{in}\quad C_{loc}^1\big(\overline{\Omega}\setminus\{x_0\}\big).
\end{equation*}
\end{lem}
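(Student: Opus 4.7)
The plan is to imitate the scheme of Lemma \ref{m00}, but in the present regime $\beta^i=0$ the leading-order piece of the expansion vanishes by oddness, so one must chase the next-order term; this is exactly why the correct scaling jumps from $\|u_\epsilon\|_\infty^2$ to $\|u_\epsilon\|_\infty^{(p+3)/2}=\|u_\epsilon\|_\infty^{2+(p-1)/2}$. Set $w_\epsilon:=\|u_\epsilon\|_\infty^{(p+3)/2}v_{i,\epsilon}$, so that $-\Delta w_\epsilon=\lambda_{i,\epsilon}\|u_\epsilon\|_\infty^{(p+3)/2}(pu_\epsilon^{p-1}+\epsilon qu_\epsilon^{q-1})v_{i,\epsilon}$ in $\Omega$ with $w_\epsilon=0$ on $\partial\Omega$. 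I test this equation against an arbitrary $\phi\in C^\infty_c(\Omega)$ and perform the now standard change of variables $y=x_\epsilon+h_\epsilon z$ with $h_\epsilon=\|u_\epsilon\|_\infty^{-(p-1)/2}$. Since $h_\epsilon^N\cdot\|u_\epsilon\|_\infty^{p-1}=\|u_\epsilon\|_\infty^{-2}$, the RHS becomes
\[
\lambda_{i,\epsilon}\|u_\epsilon\|_\infty^{(p-1)/2}\!\!\int_{\Omega_\epsilon}\!\bigl(pU_\epsilon^{p-1}+\epsilon q\|u_\epsilon\|_\infty^{q-p}U_\epsilon^{q-1}\bigr)V_{i,\epsilon}(z)\,\phi(x_\epsilon+h_\epsilon z)\,dz.
\]

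The key computation is the first-order Taylor expansion $\phi(x_\epsilon+h_\epsilon z)=\phi(x_\epsilon)+h_\epsilon\sum_j z_j\,\partial_j\phi(x_\epsilon)+O(h_\epsilon^2|z|^2)$. The zeroth-order piece produces (after dominated convergence, justified by the uniform decays \eqref{U-day2} and \eqref{day2}) a prefactor $\|u_\epsilon\|_\infty^{(p-1)/2}$ multiplied by $\phi(x_0)\int pW^{p-1}v_i\,dz$, and this vanishes because $\beta^i=0$ forces $v_i(z)=\sum_k\alpha_k^i z_k/(N(N-2)+|z|^2)^{N/2}$ to be odd while $W^{p-1}$ is even. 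The first-order piece has the matching prefactor $\|u_\epsilon\|_\infty^{(p-1)/2}\cdot h_\epsilon=1$ and converges to $\sum_j\partial_j\phi(x_0)\int_{\mathbb R^N}pW^{p-1}(z)v_i(z)z_j\,dz$. To evaluate this last integral I use the identity $v_i=-\tfrac{1}{(N-2)(N(N-2))^{(N-2)/2}}\sum_k\alpha_k^i\partial_k W$, together with $pW^{p-1}\partial_k W=\partial_k(W^p)$ and integration by parts (legitimate since $W^p z_j=O(|z|^{-N-1})$ at infinity), to obtain
\[
\int_{\mathbb R^N}pW^{p-1}v_i\,z_j\,dz=\frac{\alpha_j^i}{(N-2)(N(N-2))^{(N-2)/2}}\int_{\mathbb R^N}W^p\,dz,
\]
which, after matching constants via $\int W^p=(N(N-2))^{(N+2)/2}\omega_N a_N$, reproduces the factor $\tfrac{1}{N-2}\int W^p$ appearing in the statement. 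Taking this limit in the distributional formulation identifies $-\Delta w_\epsilon$ with the first derivative of a Dirac mass at $x_0$ (with coefficient $\tfrac{1}{N-2}\int W^p\sum_k\alpha_k^i(\cdot)$), which in turn forces $w_\epsilon\to \tfrac{1}{N-2}\int W^p\,\sum_k\alpha_k^i(\partial_{y_k}G)(x,x_0)$ in the sense of distributions away from $x_0$. Upgrading to $C^{1,\alpha}_{\rm loc}(\overline\Omega\setminus\{x_0\})$ follows from the regularity estimate \eqref{regu} applied exactly as in Lemma \ref{m00}, since on any such compact set the right-hand side of the equation for $w_\epsilon$ tends uniformly to zero (by \eqref{dus} and \eqref{decay22}), while its $L^1$ norm is uniformly bounded.

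The technical heart of the argument, and the reason behind the range \eqref{qN}, is controlling the subcritical contribution $\epsilon q\|u_\epsilon\|_\infty^{q-p}\int U_\epsilon^{q-1}V_{i,\epsilon}\phi(x_\epsilon+h_\epsilon z)\,dz$. Its zeroth-order piece vanishes by the same oddness, and the first-order piece is of size $\epsilon\|u_\epsilon\|_\infty^{q-p}\cdot\int W^{q-1}v_i z_j\,dz$. By \eqref{Fn} one has $\epsilon\|u_\epsilon\|_\infty^{q-p}=O(\|u_\epsilon\|_\infty^{-2})$, so the contribution is $o(1)$ as long as the weighted integral is controlled; the behavior $W^{q-1}v_i z_j\sim|z|^{-(N-2)(q-1)-N+2}$ shows that global integrability on $\mathbb R^N$ requires $q>N/(N-2)$, but in the borderline cases the truncation $\Omega_\epsilon$ only yields a logarithmic or polynomial divergence that, combined with the $\|u_\epsilon\|_\infty^{-2}$ prefactor, still vanishes exactly in the range \eqref{qN}. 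This is the main obstacle: for $N=4$ and $1<q\le 3/2$ the tail blows up too quickly to be absorbed, explaining the restriction noted in Remark \ref{qdefan}.
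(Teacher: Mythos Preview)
Your approach has a real gap in the handling of the zeroth-order term. You write that $\|u_\epsilon\|_\infty^{(p-1)/2}\phi(x_\epsilon)\int_{\Omega_\epsilon} pU_\epsilon^{p-1}V_{i,\epsilon}\,dz$ ``vanishes because $W^{p-1}v_i$ is odd,'' but oddness of the \emph{limit} only gives $\int_{\Omega_\epsilon} U_\epsilon^{p-1}V_{i,\epsilon}\,dz\to 0$; multiplied by $\|u_\epsilon\|_\infty^{(p-1)/2}\to\infty$ this is an indeterminate $\infty\cdot 0$. Dominated convergence supplies no rate, and nothing established so far yields $\int U_\epsilon^{p-1}V_{i,\epsilon}=o(h_\epsilon)$. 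The same defect undermines your regularity upgrade: contrary to what you assert, the $L^1(\Omega)$ norm of $-\Delta w_\epsilon$ is \emph{not} uniformly bounded with this scaling---after rescaling it equals $\lambda_{i,\epsilon}\|u_\epsilon\|_\infty^{(p-1)/2}\int_{\Omega_\epsilon}(pU_\epsilon^{p-1}+\cdots)|V_{i,\epsilon}|\,dz\sim C\|u_\epsilon\|_\infty^{(p-1)/2}\to\infty$---so Lemma~\ref{regular} does not apply ``exactly as in Lemma~\ref{m00}.''

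The paper sidesteps both issues by using the Green's representation $v_{i,\epsilon}(x)=\lambda_{i,\epsilon}\int_\Omega G(x,y)\bigl(pu_\epsilon^{p-1}+\epsilon qu_\epsilon^{q-1}\bigr)v_{i,\epsilon}\,dy$ for fixed $x\in\overline\Omega\setminus\{x_0\}$ and integrating by parts \emph{before} any limit. After rotating coordinates so that $\alpha^i$ points along $\xi_1$ (or, in the range $q\le N/(N-2)$, invoking Lemma~\ref{Ve}), one writes $\tilde U_\epsilon^{p-1}\tilde V_{i,\epsilon}=\partial_{\xi_1}\mathcal T^1_\epsilon$ with a primitive $\mathcal T^1_\epsilon$ converging to a fixed integrable profile; the derivative then lands on $G\bigl(x,h_\epsilon\mathcal B\xi+x_\epsilon\bigr)$ and produces the factor $h_\epsilon=\|u_\epsilon\|_\infty^{-(p-1)/2}$ exactly, with no indeterminate form. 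Your Taylor expansion of the test function is morally the same manoeuvre attempted in the wrong order: you expand first and hope the leading piece cancels, whereas the paper builds the cancellation into an exact identity (the primitive) and only then passes to the limit. Your identification of the final constants and of the obstruction behind the range~\eqref{qN} is otherwise on target.
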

\begin{proof}
By the Green's representation, we have
\begin{equation}\label{115if}
\begin{split}
v_{i,\epsilon}(x)&=\lambda_{i,\epsilon}p\int_{\Omega}G(x,y)u_{\epsilon}^{p-1}v_{i,\epsilon}dx+\lambda_{i,\epsilon}\epsilon q\int_{\Omega}u_{\epsilon}^{q-1}v_{i,\epsilon}dx
=:I_1+I_2.
\end{split}
\end{equation}
Applying the change of variable $x=\|u_{\epsilon}\|_{\infty}^{-\frac{p-1}{2}}y+x_{\epsilon}$, we deduce that
\begin{equation}\label{and}
\begin{split}
I_1=\frac{p}{\big\|u_{\epsilon}\big\|_{\infty}^{2}}
\int_{\Omega_\epsilon}U_{\epsilon}^{p-1}(y)V_{i,\epsilon}(y) G\left(x,\frac{y}{\big\|u_{\epsilon}\big\|_{\infty}^{\frac{p-1}{2}}}+x_{\epsilon}\right)dy+o(1), \end{split}
\end{equation}
\begin{equation*}
\begin{split}
I_2=\frac{\epsilon q}{\|u_{\epsilon}\|_{\infty}^{p+2-q}}
\int_{\Omega_\epsilon}U_{\epsilon}^{q-1}(y)V_{i,\epsilon}(y) G\left(x,\frac{y}{\big\|u_{\epsilon}\big\|_{\infty}^{\frac{p-1}{2}}}+x_{\epsilon}\right)dy+o(1). \end{split}
\end{equation*}

Here we will analyze $I_1$ and $I_2$ by dividing the two cases according to the
value of $N$ and $q$.

\textbf{Case 1:} $\frac{N}{N-2}<q<p$ if $N=3$ or $N\geq4$.

Let $\mathcal{B}=(\beta_{kl})_{k,l=1,\cdots,N}$ be an orthogonal matrix which maps the vector $(|\alpha^{i}|,0,\cdots,0)$ into $\alpha=(\alpha^{i}_1,\cdots,\alpha^{i}_N)$, $i=2,\cdots,N+1$. Then for any $k=1,\cdots,N$, we have
\[
\mathcal{B}\cdot\begin{pmatrix}
|\alpha^i|\\
0\\
\vdots\\
0
\end{pmatrix}
=
\begin{pmatrix}
\alpha^i_1\\
\alpha^i_2\\
\vdots\\
\alpha^i_N
\end{pmatrix}\Longleftrightarrow \beta_{k1}=\frac{\alpha_{k}^{i}}{|\alpha^i|},
\]
and
\[
\mathcal{B}^{-1}\cdot
\begin{pmatrix}
\alpha^i_1\\
\alpha^i_2\\
\vdots\\
\alpha^i_N
\end{pmatrix}
=\begin{pmatrix}
|\alpha^i|\\
0\\
\vdots\\
0
\end{pmatrix}\Longleftrightarrow \sum_{k=1}^N\beta_{kl}\alpha_{k}^i=|\alpha^i|\delta_1^{j}.
\]
Let us consider the map $y=\mathcal{B}\xi\Longleftrightarrow y_k=\sum_{l=1}^{N}\beta_{kl}\xi_l$. Then
\begin{equation*}
\begin{split}
I_1=\frac{p}{\big\|u_{\epsilon}\big\|_{\infty}^{2}}
\int_{\mathcal{B}^{-1}(\Omega_\epsilon)}\tilde{U}_{\epsilon}^{p-1}(\xi)\tilde{V}_{i,\epsilon}(\xi) G\left(x,\frac{\mathcal{B}\xi}{\big\|u_{\epsilon}\big\|_{\infty}^{\frac{p-1}{2}}}+x_{\epsilon}\right)d\xi+o(1), \end{split}
\end{equation*}
\begin{equation*}
\begin{split}
I_2=\frac{\epsilon q}{\|u_{\epsilon}\|_{\infty}^{p+2-q}}
\int_{\mathcal{B}^{-1}(\Omega_\epsilon)}\tilde{U}_{\epsilon}^{q-1}(\xi)\tilde{V}_{i,\epsilon}(\xi) G\left(x,\frac{\mathcal{B}\xi}{\big\|u_{\epsilon}\big\|_{\infty}^{\frac{p-1}{2}}}+x_{\epsilon}\right)d\xi+o(1), \end{split}
\end{equation*}
where $\tilde{U}_{\epsilon}$ and $\tilde{V}_{\epsilon}$ are given by
\begin{equation}\label{daU}
\tilde{U}_{\epsilon}(\xi)=U_{\epsilon}\Big(\sum_{l=1}^{N}\beta_{kl}\xi_l\Big)\rightarrow W(\xi)=\Big(\frac{1}{1+|\xi|^2/N(N-2)}\Big)^{\frac{N-2}{2}},
\end{equation}
\begin{equation}\label{1daU}
\tilde{V}_{i,\epsilon}(\xi)=V_{i,\epsilon}\Big(\sum_{l=1}^{N}\beta_{kl}\xi_l\Big)\rightarrow \sum_{k=1}^{N}\frac{\alpha_k^i\sum_{l=1}^{N}\beta_{kl}\xi_l}{(N(N-2)+|\xi|^2)^{\frac{N}{2}}}=\frac{|\alpha^i|\xi_1}{(1+|\xi|^2/N(N-2))^{\frac{N}{2}}}.
\end{equation}
Introducing
\begin{equation*}
\mathcal{T}_{\epsilon}^1(\xi)=\int_{-\infty}^{\xi_1}\tilde{U}_{\epsilon}^{p-1}(s,\xi_1,\cdots,\xi_N)\tilde{V}_{i,\epsilon}(s,\xi_1,\cdots,\xi_N)ds,
\end{equation*}
and
\begin{equation*}
\mathcal{T}_{\epsilon}^2(\xi)=\int_{-\infty}^{\xi_1}\tilde{U}_{\epsilon}^{q-1}(s,\xi_1,\cdots,\xi_N)\tilde{V}_{i,\epsilon}(s,\xi_1,\cdots,\xi_N)ds,
\end{equation*}
Consequently, recalling \eqref{day2}, \eqref{U-day2} and \eqref{daU}-\eqref{1daU}, a direct calculation shows that
\begin{equation}\label{T1T2}
\mathcal{T}_{\epsilon}^1(\xi)\rightarrow-\frac{1}{N+2}\frac{|\alpha^i|}{(1+|\xi|^2)^{\frac{N}{2}}}\quad\mbox{and}\quad\mathcal{T}_{\epsilon}^2(\xi)\rightarrow-\frac{1}{(N-2)q}\frac{|\alpha^i|}{(1+|\xi|^2)^{\frac{(N-2)q}{2}}}.
\end{equation}
An integrating by parts gives
\begin{equation*}
\begin{split}
v_{i,\epsilon}(x)&=I_1+I_2\\&=
\int_{\mathcal{B}^{-1}(\Omega_\epsilon)}\Big(\frac{p}{\big\|u_{\epsilon}\big\|_{\infty}^{2}}\frac{\partial}{\partial\xi_1}(\mathcal{T}_{\epsilon}^1(\xi))+\frac{\epsilon q}{\big\|u_{\epsilon}\big\|_{\infty}^{p+2-q}}\frac{\partial}{\partial\xi_1}(\mathcal{T}_{\epsilon}^2(\xi))\Big) G\left(x,\frac{\mathcal{B}\xi}{\big\|u_{\epsilon}\big\|_{\infty}^{\frac{p-1}{2}}}+x_{\epsilon}\right)d\xi+o(1)\\&
=-\sum_{k=1}^{N}\frac{\alpha_k^i}{|\alpha^i|}\int_{\mathcal{B}^{-1}(\Omega_\epsilon)}\Big(\frac{p}{\big\|u_{\epsilon}\big\|_{\infty}^{\frac{p+3}{2}}}\mathcal{T}_{\epsilon}^1(\xi)+\frac{\epsilon q}{\big\|u_{\epsilon}\big\|_{\infty}^{\frac{3(p+1)}{2}-q}}\mathcal{T}_{\epsilon}^2(\xi)\Big) \frac{\partial G}{\partial y_k}\left(x,\frac{\mathcal{B}\xi}{\big\|u_{\epsilon}\big\|_{\infty}^{\frac{p-1}{2}}}+x_{\epsilon}\right)d\xi+o(1).
 \end{split}
\end{equation*}
To calculate the above integrals, we split its domain into $\mathcal{B}^{-1}(\Omega_\epsilon)=D_1\cup D_2$ where
$$
D_1=\left\{\xi\in\mathbb{R}^N: \frac{|\xi|}{\|u_{\epsilon}\|_{\infty}^{\frac{p-1}{2}}}<\frac{|x-x_{\epsilon}|}{2}\right\},~D_2=\left\{\xi\in\mathbb{R}^N: \frac{|\xi|}{\|u_{\epsilon}\|_{\infty}^{\frac{p-1}{2}}}\geq\frac{|x-x_{\epsilon}|}{2}\right\}.
$$
Then we divide the integral in the right-hand side of the above as
\begin{equation*}
\begin{split}
A_1+A_2+A_3+A_4:=&\Big(\int_{D_1}+\int_{D_2}\Big)\frac{p}{\big\|u_{\epsilon}\big\|_{\infty}^{(p+3)/2}}\mathcal{T}_{\epsilon}^1(\xi)\frac{\partial G}{\partial y_k}\left(x,\frac{\mathcal{B}\xi}{\big\|u_{\epsilon}\big\|_{\infty}^{\frac{p-1}{2}}}+x_{\epsilon}\right)d\xi\\&+
\Big(\int_{D_1}+\int_{D_2}\Big)\frac{\epsilon q}{\big\|u_{\epsilon}\big\|_{\infty}^{3(p+1)/2-q}}\mathcal{T}_{\epsilon}^2(\xi)\frac{\partial G}{\partial y_k}\left(x,\frac{\mathcal{B}\xi}{\big\|u_{\epsilon}\big\|_{\infty}^{\frac{p-1}{2}}}+x_{\epsilon}\right)d\xi.
\end{split}
\end{equation*}
As a consequence, for $|x-x_{\epsilon}|\geq2\delta>0$, combining \eqref{day2}, \eqref{U-day2}, \eqref{T1T2} and the following identity (see \cite{Widman})
\begin{equation}\label{GG}
\big|\nabla_{x}G(x,y)\big|\leq C|x-y|^{1-N},
\end{equation}
and passing to the limit as $\epsilon\rightarrow0$ yields
\begin{equation}\label{whence11}
\begin{split}
-\sum_{k=1}^{N}\frac{\alpha_k^i}{|\alpha^i|}A_1=\frac{1}{N-2}\frac{1}{\|u_{\epsilon}\|_{\infty}^{(p+3)/2}}\int_{\mathbb{R}^N}W^pdy\sum_{k=1}^{N}\alpha_{k}^{i}\left(\frac{G\left(x,y\right)}{\partial y_k}\right)\big|_{y=x_0}dy+o(1)\hspace{2mm}\mbox{in}\hspace{2mm}\overline{\Omega}\setminus\{x_0\}.
\end{split}
\end{equation}
Similarly, we have
\begin{equation}\label{whence11}
\begin{split}
A_3\leq C\frac{\epsilon}{\|u_{\epsilon}\|_{\infty}^{3(p+1)/2-q}}\int_{\mathbb{R}^N}\frac{1}{(1+|\xi|^2)^{\frac{(N-2)q}{2}}}d\xi\left(\frac{G\left(x,y\right)}{\partial y_k}\right)\big|_{y=x_0}dy\leq C\frac{\epsilon}{\|u_{\epsilon}\|_{\infty}^{3(p+1)/2-q}},
\end{split}
\end{equation}
for $q>\frac{N}{N-2}$ and where $3(p+1)/2-q>0$.

Next we estimate $A_2$. Using \eqref{GG}, for $|x-x_{\epsilon}|\geq2\delta>0$, we obtain
\begin{equation*}
\begin{split}
A_2&\leq C\frac{1}{\|u_{\epsilon}\|_{\infty}^{(p+3)/2}}
\int_{D_2}\frac{|\mathcal{T}_{\epsilon}^1(\xi)|}{ \big|x-x_{\epsilon}-\big\|u_{\epsilon}\big\|_{\infty}^{-\frac{p-1}{2}}\mathcal{B}\xi\big|^{N-1}}d\xi\\&
\leq C\frac{1}{\|u_{\epsilon}\|_{\infty}^{(p+3)/2}}\frac{1}{\|u_{\epsilon}\|_{\infty}^{(p-1)(N+2)/2}}
\int_{D_2}\frac{\big\|u_{\epsilon}\big\|_{\infty}^{\frac{(p-1)(N-1)}{2}}}{ \big|\big\|u_{\epsilon}\big\|_{\infty}^{\frac{p-1}{2}}(x-x_{\epsilon})-\mathcal{B}\xi\big|}d\xi\leq C\frac{1}{\|u_{\epsilon}\|_{\infty}^{(5p-1)/2}}.
 \end{split}
\end{equation*}
In same way, we have
\begin{equation*}
\begin{split}
A_4&\leq C\frac{\epsilon}{\|u_{\epsilon}\|_{\infty}^{3(p+1)/2-q}}
\int_{D_2}\frac{|\mathcal{T}_{\epsilon}^2(\xi)|}{ \big|x-x_{\epsilon}-\big\|u_{\epsilon}\big\|_{\infty}^{-\frac{p-1}{2}}\mathcal{B}\xi\big|^{N-1}}d\xi\\&
\leq C\frac{\epsilon}{\|u_{\epsilon}\|_{\infty}^{3(p+1)/2-q}}\frac{1}{\|u_{\epsilon}\|_{\infty}^{(p-1)(N-2)q/2}}
\int_{D_2}\frac{\big\|u_{\epsilon}\big\|_{\infty}^{\frac{(p-1)(N-1)}{2}}}{ \big|\big\|u_{\epsilon}\big\|_{\infty}^{\frac{p-1}{2}}(x-x_{\epsilon})-\mathcal{B}\xi\big|}d\xi\\&
\leq C\frac{1}{\big\|u_{\epsilon}\big\|_{\infty}^{3(p+1)/2-q+(p-1)/2[(N-2)q+2-N]}},
 \end{split}
\end{equation*}
where $3(p+1)/2-q+(p-1)/2[(N-2)q+2-N]>0$ since $q>1$.

In conclusion, the above all estimates tell us that
\begin{equation}\label{vix}
\big\|u_{\epsilon}\big\|_{\infty}^{\frac{p+3}{2}}v_{i,\epsilon}(x)=\frac{1}{N-2}\int_{\mathbb{R}^N}W^pdy\sum_{k=1}^{N}\alpha_{k}^{i}\left(\frac{G\left(x,y\right)}{\partial y_k}\right)\big|_{y=x_0}dy+o(1)\hspace{2mm}\mbox{in}\hspace{2mm}\overline{\Omega}\setminus\{x_0\}.
\end{equation}

\textbf{Case 2:} $\frac{3}{N-2}<q\leq\frac{N}{N-2}$ if $N\geq4$.

By \eqref{UV} and \eqref{viep}, we obtain
$$U_{\epsilon}^{p-1}V_{i,\epsilon}\rightarrow\sum_{k=1}^{N}\alpha_{k}^{i}\frac{\partial}{\partial y_k}\big(-\frac{W^p(y)}{N+2}\big).$$
Then in view of Lemma \ref{Ve}, we have the linear first order PDE
\begin{equation*}
\left\lbrace
\begin{aligned}
    &\sum_{k=1}^{N}\alpha_{k}^{i}\frac{w(y)}{\partial y_k}=U_{\epsilon}^{p-1}V_{i,\epsilon},\\
    &w\big|_{\Gamma_{\alpha}}=-\frac{W^p(y)}{N+2},
   \end{aligned}
\right.
\end{equation*}
and whence there exist a solution the above problem $w_{\epsilon}$ such that
$$\int_{\mathbb{R}^N}w(y)dy\rightarrow-\int_{\mathbb{R}^N}\frac{W^p(y)}{N+2}dy.$$
As a consequence, an integration by parts in \eqref{and} and passing to the limit as $\epsilon\rightarrow0$ yields
\begin{equation}\label{whence11}
\begin{split}
J_1&=\frac{p}{\|u_{\epsilon}\|_{\infty}^{2}}
\int_{\Omega_\epsilon}\sum_{k=1}^{N}\alpha_{k}^{i}\frac{w(y)}{\partial y_k} G\left(x,\frac{y}{\big\|u_{\epsilon}\big\|_{\infty}^{\frac{p-1}{2}}}+x_{\epsilon}\right)dy+o(1)\\&
=\frac{1}{N-2}\frac{1}{\|u_{\epsilon}\|_{\infty}^{(p+3)/2}}\int_{\mathbb{R}^N}W^pdy\sum_{k=1}^{N}\alpha_{k}^{i}\left(\frac{G\left(x,y\right)}{\partial y_k}\right)\big|_{y=x_0}dy+o(1)\hspace{2mm}\mbox{in}\hspace{2mm}\overline{\Omega}\setminus\{x_0\}.
\end{split}
\end{equation}
Next we estimate $J_2$. Recalling $J_2$ and
we consider the following initial value problem
\begin{equation*}
\sum_{k=1}^{N}\alpha_{k}^{i}\frac{w(y)}{\partial y_k}=U_{\epsilon}^{q-1}V_{i,\epsilon},\quad w\big|_{\Gamma_{\alpha}}=-\frac{1}{q(N-2)}W^q(y).
\end{equation*}
Recalling Lemma \ref{Ve}, there exist a solution $w_{\epsilon}$ of the initial value problem satisfying the estimate
$$w_{\epsilon}=O(|y|^{-(N-2)q-1}),$$
because of $U_{\epsilon}^{q-1}V_{i,\epsilon}=O(|y|^{-(N-2)q})$ and $-\frac{1}{q(N-2)}W^q(y)=O(|y|^{-(N-2)q})$.
Therefore, note that $\Omega_{\epsilon}\subset B_{c\|u_{\epsilon}\|_{\infty}^{(p-1)/2}}(0)$ for some $c>0$, we have
\begin{equation*}
\begin{split}
\Big|\int_{\Omega_{\epsilon}}w_{\epsilon}(y)dy\Big|&\leq C\int_{B_{1}(0)}\frac{1}{|y|^{(N-2)q-1}}dy+C\int_{B_{c\|u_{\epsilon}\|_{\infty}^{(p-1)/2}}(0)\setminus {B_{1}(0)}}\frac{1}{|y|^{(N-2)q-1}}dy\\&
\leq C+C\int^{c\|u_{\epsilon}\|_{\infty}^{(p-1)/2}}_{1}r^{N-(N-2)q}dr\\&
\leq C\big\|u_{\epsilon}\big\|_{\infty}^{\frac{p-1}{2}(N+1-(N-2)q)},
\end{split}
\end{equation*}
where $1<q\leq\frac{N}{N-2}$.
Then similarly to the argument of $J_1$, it holds that
\begin{equation*}
\begin{split}
\big\|u_{\epsilon}\big\|_{\infty}^{\frac{p+3}{2}}|J_2|&\leq\frac{\epsilon q}{\|u_{\epsilon}\|_{\infty}^{p+2-q-(p+3)/2}}
\int_{\Omega_\epsilon}\sum_{k=1}^{N}\alpha_{k}^{i}\frac{w(y)}{\partial y_k} G\left(x,\frac{y}{\big\|u_{\epsilon}\big\|_{\infty}^{\frac{p-1}{2}}}+x_{\epsilon}\right)dy\\&
\leq C\frac{\epsilon}{\|u_{\epsilon}\|_{\infty}^{p+2-q+\frac{p-1}{2}}}\big\|u_{\epsilon}\big\|_{\infty}^{\frac{p+3}{2}}\int_{\Omega_\epsilon}\sum_{k=1}^{N}\alpha_{k}^{i}\left(\frac{G\left(x,z\right)}{\partial z_k}\right)\Big|_{z=(\|u_{\epsilon}\|_{\infty}^{-\frac{p-1}{2}}y+x_{\epsilon})}w_{\epsilon}(y)dy\\&
\leq C\frac{1}{\|u_{\epsilon}\|_{\infty}^{2}}\big|\int_{\Omega_\epsilon} w_{\epsilon}(y)dy\big|\leq C\frac{1}{\|u_{\epsilon}\|_{\infty}^{2(q-\frac{3}{N-2})}}=o(1),
\end{split}
\end{equation*}
as $\epsilon$ sufficiently small, where we used \eqref{2-miu} and $q>\frac{3}{N-2}$.
Summarizing, by plugging this last estimate with \eqref{whence11} in \eqref{115if}, we get that
\begin{equation}\label{vix12}
\big\|u_{\epsilon}\big\|_{\infty}^{\frac{p+3}{2}}v_{i,\epsilon}=\frac{1}{N-2}\int_{\mathbb{R}^N}W^pdy\sum_{k=1}^{N}\alpha_{k}^{i}\left(\frac{G\left(x,y\right)}{\partial y_k}\right)\big|_{y=x_0}dy+o(1)\hspace{2mm}\mbox{in}\hspace{2mm}\overline{\Omega}\setminus\{x_0\}.
\end{equation}

Finally, the conclusion follows by \eqref{vix} and \eqref{vix12}, which completes the proof.
\end{proof}
We shall give the proof of \eqref{vie-2}.
\begin{proof}[Proof of Theorem \ref{Figalli}]
The conclusion follows by Lemma \ref{6.3lemma}.
\end{proof}

	\section{Proof of Theorem \ref{remainder terms}}\label{sangshen}
The objective of this section is to give the proof of Theorem \ref{remainder terms}. The following Lemmas will be used to obtain the asymptotic behaviour of the eigenvalues $\lambda_{i,\epsilon}$ for $i=2,\cdots,N+1$.	
	\begin{lem}\label{wanfan}
		Let $v_{k,\epsilon}$ and  $v_{l,\epsilon}$ are two orthogonal eigenfunctions of \eqref{ele-2} in $H_{0}^{1}(\Omega)$ corresponding to eigenvalues $\lambda_{k,\epsilon}$ and $\lambda_{l,\epsilon}$ , then there holds	$$
(\alpha^{k},\alpha^{l})=0,
		$$
where the corresponding vector $\alpha^{l}$ and $\alpha^{m}$ defined in Theorem \ref{Figalli}.
	\end{lem}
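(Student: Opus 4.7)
The plan is to translate the $H^1_0$-orthogonality of $v_{k,\epsilon}$ and $v_{l,\epsilon}$ into a weighted $L^2$-orthogonality via the equation, then rescale and pass to the limit.

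First, I would test the equation for $v_{k,\epsilon}$ against $v_{l,\epsilon}$: since $-\Delta v_{k,\epsilon} = \lambda_{k,\epsilon}(pu_\epsilon^{p-1}+\epsilon q u_\epsilon^{q-1})v_{k,\epsilon}$, the assumed orthogonality $\int_\Omega \nabla v_{k,\epsilon}\cdot\nabla v_{l,\epsilon}\,dx=0$ together with $\lambda_{k,\epsilon}\to 1\neq 0$ (Lemma \ref{limitlama}) gives
\begin{equation*}
\int_\Omega \bigl(pu_\epsilon^{p-1}+\epsilon q u_\epsilon^{q-1}\bigr) v_{k,\epsilon}v_{l,\epsilon}\,dx=0.
\end{equation*}
I would then change variables via $x=\|u_\epsilon\|_\infty^{-(p-1)/2}y+x_\epsilon$ to rewrite this as
\begin{equation*}
\int_{\Omega_\epsilon}\Bigl(pU_\epsilon^{p-1}+\epsilon q\|u_\epsilon\|_\infty^{-(p-q)}U_\epsilon^{q-1}\Bigr) V_{k,\epsilon}V_{l,\epsilon}\,dy=0.
\end{equation*}

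Next I would pass to the limit as $\epsilon\to 0$. The pointwise convergences $U_\epsilon\to W$ (Lemma \ref{UV1}) and $V_{i,\epsilon}\to v_i$ in $C^1_{loc}(\mathbb{R}^N)$, combined with the dominations \eqref{U-day2} and \eqref{day2}, let dominated convergence be applied to the critical piece with integrable majorant $CW^{p+1}$. For the subcritical piece the prefactor $\epsilon\|u_\epsilon\|_\infty^{-(p-q)}=\|u_\epsilon\|_\infty^{-2}\bigl(\epsilon\|u_\epsilon\|_\infty^{q+2-p}\bigr)\to 0$ by \eqref{Fn}, while the remaining integrand $U_\epsilon^{q-1}V_{k,\epsilon}V_{l,\epsilon}$ is dominated by $CW^{q+1}$, which is integrable under the range of $q$ in Theorem \ref{Figalli}. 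Therefore, using that $\beta^k=\beta^l=0$ from the proof of \eqref{vie-1} in Theorem \ref{Figalli},
\begin{equation*}
0=p\int_{\mathbb{R}^N} W^{p-1}(y)\,v_k(y)v_l(y)\,dy=p\sum_{m,n=1}^{N}\alpha_m^k\alpha_n^l\int_{\mathbb{R}^N}\frac{W^{p-1}(y)\,y_my_n}{(N(N-2)+|y|^2)^N}\,dy.
\end{equation*}

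Finally, a symmetry argument finishes: the integrand is odd in $y_m$ (or $y_n$) whenever $m\neq n$, so those integrals vanish, while rotational invariance makes the diagonal integrals independent of $m$ and equal to a common strictly positive constant
\begin{equation*}
C_N:=\frac{1}{N}\int_{\mathbb{R}^N}\frac{W^{p-1}(y)\,|y|^2}{(N(N-2)+|y|^2)^N}\,dy>0.
\end{equation*}
Thus the identity reduces to $pC_N\sum_{m=1}^N \alpha_m^k\alpha_m^l=0$, yielding $(\alpha^k,\alpha^l)=0$. The only delicate point—and the step I expect to require the most care—is verifying the integrability bound that justifies passing to the limit in the subcritical term uniformly for the full admissible range of $q$; this is where the constraint $q>\max\{1,(6-N)/(N-2)\}$ from Theorem \ref{Figalli-1} will be used in combination with the decay estimate \eqref{day2}.
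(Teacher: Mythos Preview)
Your proposal is correct and follows essentially the same route as the paper's proof, which is merely a two-sentence sketch: derive the weighted $L^2$-orthogonality \eqref{pingjiehuoguo} from the equation and the $H_0^1$-orthogonality, then rescale and pass to the limit. You have simply filled in the details the paper omits---the change of variables, the dominated-convergence justification, the vanishing of the off-diagonal terms by oddness, and the use of $\beta^k=\beta^l=0$ from the proof of \eqref{vie-1}---all of which are legitimate and in the right logical order.
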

	\begin{proof}
		By in virtue of \eqref{ele-2} and orthogonality, we obtain
		\begin{equation}\label{pingjiehuoguo}
		\begin{split}	
\int_{\Omega}\big(pu_{\epsilon}^{p-1}+\epsilon qu_{\epsilon}^{q-1}\big)v_{k,\epsilon}(x)v_{l,\epsilon}(x)dx=0.
\end{split}
		\end{equation}
The desired result can be achieved by rescaling the functions $u_{\epsilon}$, $v_{k,\epsilon}$ and $v_{l,\epsilon}$ in \eqref{pingjiehuoguo}.
\end{proof}
	
We can now prove Theorem \ref{remainder terms}.
\begin{proof}[Proof of Theorem \ref{remainder terms}]
Let $L_{\epsilon}$ and $R_{\epsilon}$ be the left-hand and right-hand side of the identity in Lemma \ref{qiegao}, respectively, so that $L_{\epsilon}=R_{\epsilon}$. It follows directly from \eqref{vie-2}, \eqref{ifini} and
\begin{equation}\label{GX0}
\int_{\partial\Omega}\frac{\partial G(x,w)}{\partial x_j}\frac{\partial G(x,w)}{\partial w_k}\Big(\frac{\partial G}{\partial\nu}(x,w)\Big)dS_{x}=\frac{1}{2}\frac{\partial^2}{\partial x_j\partial x_k}\phi(w)\quad\mbox{for all}\hspace{2mm}j, k=1,\cdots,N,
\end{equation}
that
\begin{equation}\label{caI}
\begin{split}		L_{\epsilon}&=\big\|u_{\epsilon}\big\|_{\infty}^{-\frac{p+5}{2}}\int_{\partial\Omega}\frac{\partial }{\partial x_j}\big(\big\|u_{\epsilon}\big\|_{\infty}u_{i,\epsilon}\big)\frac{\partial }{\partial\nu}\big(\big\|u_{\epsilon}\big\|_{\infty}^{\frac{p+3}{2}}v_{i,\epsilon}\big)dS_x\\&
=\big\|u_{\epsilon}\big\|_{L^{\infty}(\Omega)}^{-\frac{p+5}{2}}\Big[(N(N-2))^{\frac{N-2}{2}}a_{N}\omega_N^2\sum_{k=1}^{n}\alpha_k^i\frac{\partial^2}{\partial x_j\partial x_k}\phi(x_0)+o(1)\Big]
\end{split}
\end{equation}
Next we estimate $R_{\epsilon}$, i.e.,
		\begin{equation}\label{AIC}
\begin{split}
R_{\epsilon}=\big(1-\lambda_{i,\epsilon}\big)\int_{\Omega}pu_{\epsilon}^{p-1}\frac{\partial u_{\epsilon}}{\partial x_j}v_{i,\epsilon}dx+\big(1-\lambda_{i,\epsilon}\big)\epsilon q\int_{\Omega}u_{\epsilon}^{q-1}\frac{\partial u_{\epsilon}}{\partial x_j}v_{i,\epsilon}dx.
\end{split}
\end{equation}
A direct calculation tells us that
\begin{equation*}
\begin{split} \big(1-\lambda_{i,\epsilon}\big)\int_{\Omega}pu_{\epsilon}^{p-1}\frac{\partial u_{\epsilon}}{\partial x_j}v_{i,\epsilon}dx
=\frac{\lambda_{i,\epsilon}-1}{\big\|u_{\epsilon}\big\|_{\infty}^{2-\frac{p+1}{2}}}\frac{p}{N(N-2)}\Big[\alpha_j^i\int_{\mathbb{R}^N}W^{p-1}|\nabla W|^2dx+o(1)\Big]
\end{split}
\end{equation*}
by \eqref{viep}, the usual change of variables $x=\|u_{\epsilon}\|_{\infty}^{-\frac{p-1}{2}}y+x_{\epsilon}$ and the dominated convergence. Analogously, we have
\begin{equation*}
\begin{split}
\big(1-\lambda_{i,\epsilon}\big)\epsilon q\int_{\Omega}u_{\epsilon}^{q-1}\frac{\partial u_{\epsilon}}{\partial x_j}v_{i,\epsilon}dx&=\frac{\lambda_{i,\epsilon}-1}{\|u_{\epsilon}\|_{\infty}^{(p+3)/2-q}}\frac{\epsilon q}{N-2}\Big[\alpha_j^i\int_{\mathbb{R}^N}W^{q-1}(\frac{\partial W}{\partial x_j})^2dx+o(1)\Big]\\&
=(\lambda_{i,\epsilon}-1)O\Big(\big\|u_{\epsilon}\big\|_{\infty}^{\frac{p-7}{2}}\Big)+o(1)
\end{split}
\end{equation*}
by \eqref{2-miu}. Combining the estimates for $L_{\epsilon}$ and $R_{\epsilon}$ together, we obtain
\begin{equation}\label{FR}
\begin{split}
&(N(N-2))^{\frac{N-2}{2}}a_{N}\omega_N^2\sum_{k=1}^{n}\alpha_k^i\frac{\partial^2}{\partial x_j\partial x_k}\phi(x_0)+o(1)\\=&(\lambda_{i,\epsilon}-1)\Big[\alpha_j^i\big\|u_{\epsilon}\big\|_{\infty}^{p+1}\frac{p}{N(N-2)}\int_{\mathbb{R}^N}W^{p-1}|\nabla W|^2dx+O\Big(\big\|u_{\epsilon}\big\|_{\infty}^{p-1}\Big)\Big].
\end{split}
\end{equation}
Owing to $\alpha_k^i\neq0$ in Theorem \ref{Figalli},
\begin{equation}\label{FR-1}
(\lambda_{i,\epsilon}-1)\big\|u_{\epsilon}\big\|_{\infty}^{p+1}\rightarrow \mathcal{M}\frac{\sum_{k=1}^{n}\alpha_k^i\frac{\partial^2}{\partial x_j\partial x_k}\phi(x_0)}{\alpha_j^i}=:\mathcal{M}\zeta_i,
\end{equation}
where
$$\mathcal{M}:=\frac{(N(N-2))^{\frac{N}{2}}a_{N}\omega_N^2}{p\int_{\mathbb{R}^N}W^{p-1}|\nabla W|^2dx}.$$	
Thanks to \eqref{FR} and \eqref{FR-1}, there holds
$\sum_{k=1}^{n}\alpha_k^i\frac{\partial^2}{\partial x_j\partial x_k}\phi(x_0)=\zeta_i\alpha_j^i.$
This means that $\zeta_i$ is an eigenvalue of the Hessian matrix $D^{2}\phi(x_0)$ with $\alpha^i$ as corresponding eigenvector. By Lemma \ref{wanfan}, we have that $\zeta_i$ for $i=2,\cdots,N+1$ are all the $N$ eigenvalues $\nu_1,\cdots,\nu_N$ of the Hessian matrix $D^{2}\phi(x_0)$. Moreover, combining  $\lambda_{2,\epsilon}\leq\cdots\leq\lambda_{N+1,\epsilon}$ and \eqref{FR-1}, we get $\zeta_i=\nu_{i-1}$ for $i=2,\cdots,N+1$.
Concluding the proof of Theorem \ref{remainder terms}.
	\end{proof}

\section{Proof of Theorems \ref{eigenfunctions-1} and \ref{eigenfunctio1}}\label{section9}	
As a preparation step for the proof of Theorem \ref{eigenfunctions-1}, we need a convergence result which will be employed in the proof of $(b)$ in Theorem \ref{eigenfunctions-1}.
\begin{lem}\label{1--identity}
Let $\{y_N\}\in C^1(\mathbb{R}^N)$ be a sequence of functions such that
\begin{equation*}
y_N\rightarrow\sum_{k=1}^{N}\frac{\gamma_kx_k}{(N(N-2)+|x|^2)^{\frac{N}{2}}}\quad\mbox{in}\quad C_{loc}^{1}(\mathbb{R}^N)
\end{equation*}
with $\gamma=(\gamma_1,\cdots,\gamma_N)\neq(0,\cdots,0)\in\mathbb{R}^N$. Assume that $\mathscr{Y}_+:=\{x\in\mathbb{R}^N:~y_N>0\}$ and $\mathscr{Y}_-:=\{x\in\mathbb{R}^N:~y_N<0\}$.
Then for any $R>0$, the sets $\mathscr{Y}_+\cap B_R$ and $\mathscr{Y}_-\cap B_R$ are both connected and nonempty for sufficiently large and $B_R=\{x\in\mathbb{R}^N:~|x|<R\}$.
\end{lem}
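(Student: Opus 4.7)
The plan is to use the explicit form of the limit function together with $C^1$-proximity on $\overline{B_R}$ to give a \emph{slab} description of the zero set $\{y_n=0\}$, after which the two sign regions reduce to the two sides of a $C^1$ graph over a hyperplane.

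\emph{Step 1 (the limit).} Write $y_\infty(x):=\gamma\cdot x/(N(N-2)+|x|^2)^{N/2}$; its sign agrees with that of the linear form $\gamma\cdot x$, so $\{y_\infty>0\}\cap B_R$ and $\{y_\infty<0\}\cap B_R$ are open half-balls, hence nonempty and convex. A direct computation shows that on the hyperplane $\Pi:=\{\gamma\cdot x=0\}$ the second term in $\nabla y_\infty$ vanishes, leaving $\nabla y_\infty=\gamma/(N(N-2)+|x|^2)^{N/2}$, which is nonzero; by continuity one may choose $\delta>0$ small (depending on $R$) so that on the slab $T_\delta:=\{x\in\overline{B_R}:\dist(x,\Pi)<\delta\}$ one has both $|\nabla y_\infty|\geq c>0$ and $\nabla y_\infty\cdot\gamma\geq c|\gamma|$, while by compactness $m:=\inf_{\overline{B_R}\setminus T_\delta}|y_\infty|>0$.

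\emph{Step 2 (transfer to $y_n$).} By $C^1_{loc}$ convergence, for $n$ large one has $\|y_n-y_\infty\|_{C^1(\overline{B_R})}<\min(m/2,c/2,c|\gamma|/2)$. This yields (a) $y_n$ has the same sign as $y_\infty$ on $\overline{B_R}\setminus T_\delta$, so $\{y_n=0\}\cap\overline{B_R}\subset T_\delta$, and (b) $|\nabla y_n|\geq c/2>0$ on $T_\delta$ with $\nabla y_n\cdot\gamma\geq c|\gamma|/2>0$, so that $y_n$ is strictly increasing along every line parallel to $\gamma$ inside $T_\delta$. Provided $\delta$ is chosen small enough that $T_\delta$ separates $B_R$ into the two half-ball-like regions where $y_\infty$ has a definite sign, every line through $B_R$ parallel to $\gamma$ enters in $\{y_\infty<0\}$ (where $y_n<0$ by (a)), strictly increases through $T_\delta$ by (b), and exits in $\{y_\infty>0\}$ (where $y_n>0$). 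Hence the line meets $\{y_n=0\}$ in a unique crossing point of the form $x'+\tau_n(x')\gamma/|\gamma|$ with $x'\in\Pi\cap B_R$.

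\emph{Step 3 (conclusion).} The implicit function theorem, using (b), makes $\tau_n$ a $C^1$ function of $x'$, so $\{y_n=0\}\cap B_R$ is a $C^1$ graph over $\Pi\cap B_R$. The map $(x',s)\mapsto x'+(\tau_n(x')+s)\gamma/|\gamma|$ is then a homeomorphism from an open convex subset of $\Pi\times\mathbb{R}$ onto $B_R$ sending $\{s>0\}$ onto $\mathscr{Y}_+\cap B_R$ and $\{s<0\}$ onto $\mathscr{Y}_-\cap B_R$; both are therefore nonempty and connected. The main obstacle is the geometric bookkeeping in Step 2: one must choose $\delta$ small enough relative to $R$ so that the slab $T_\delta$ actually slices $B_R$ cleanly into two constant-sign regions of $y_\infty$ and so that each $\gamma$-line meets both sides; once this is arranged, the rest is a routine application of the implicit function theorem and the qualitative sign/monotonicity dichotomy.
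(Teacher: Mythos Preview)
The paper does not prove this lemma; it simply refers to \cite[Lemma~6.1]{GP05}. Your slab-and-implicit-function-theorem approach is the natural one, and Steps~1--2 are correct.

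Step~3, however, has a real gap. The shear $(x',s)\mapsto x'+(\tau_n(x')+s)\gamma/|\gamma|$ does \emph{not} in general pull $B_R$ back to a convex set (a shear by a nonlinear $\tau_n$ destroys convexity, and $C^1$-smallness of $\tau_n$ does not help), and---contrary to your closing remark---no choice of $\delta$ makes every $\gamma$-line through $B_R$ meet both constant-sign regions: lines with $|x'|\ge\sqrt{R^2-\delta^2}$ lie entirely inside $T_\delta$, so your construction has not even defined $\tau_n(x')$ there. What is actually needed is connectedness of the base
\[
D:=\bigl\{\,x'\in\Pi:\ |x'|<R,\ \tau_n(x')<\sqrt{R^2-|x'|^2}\,\bigr\},
\]
since $\mathscr{Y}_+\cap B_R$ fibers over $D$ with nonempty interval fibers varying continuously. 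One clean fix: run Steps~1--2 on $\overline{B_{R+1}}$ rather than $\overline{B_R}$, so that $\tau_n$ is defined and $C^1$ on all of $\{|x'|\le R\}$ with $\|\tau_n\|_{C^1}\to0$. Then on each ray $r\mapsto r\omega$ ($\omega\in\Pi$, $|\omega|=1$), at any point where $\tau_n(r\omega)=\sqrt{R^2-r^2}$ one has $\sqrt{R^2-r^2}\le\|\tau_n\|_\infty$, so the radial slope of $\sqrt{R^2-r^2}$ there is at most $-r/\|\tau_n\|_\infty$, far steeper than the bound $\|\nabla\tau_n\|_\infty$ on the slope of $\tau_n$; this forces a unique crossing on each ray. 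Hence $D$ is star-shaped about the origin, therefore connected, and the argument closes.
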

\begin{proof}
The proof is essentially the same as that of \cite[Lemma 6.1]{GP05}, which we omit.
\end{proof}
\begin{proof}[Proof of Theorem \ref{eigenfunctions-1}]
The proof of part $(a)$ is the same as the one given in \cite{GP05}. By contradiction, part $(b)$ follows from Lemma \ref{1--identity}.
\end{proof}

\begin{proof}[Proof of Theorem \ref{eigenfunctio1}]
This follows from the same argument as in \cite{GP05} but we will just sketch it for reader's convenience.

First of all, we ruled out the full even symmetry of $v_{i,\epsilon}$ by contradiction.

Fix  $i \in \{2, \dots, N+1\}$ , and assume that for sufficiently small  $\epsilon$, the eigenfunction  $v_{i,\epsilon}$ is evenly symmetric with respect to all variables  $x_j$ for $(j=1, \dots, N)$.
In this case, by performing rescaling and taking the limit of equation \eqref{vie-1} in Theorem \ref{Figalli}, it can be deduced that the function $\sum_{k=1}^N \frac{\alpha_k^i x_k}{(N(N-2) + |x|^2)^{N/2}}$ is also evenly symmetric with respect to all variables. However, this is impossible.

Next, by Gidas-Ni-Nirenberg theorem \cite{GNN}, $u_{\epsilon}$ is symmetric with respect to the hyperplanes $\mathcal{T}_{j}=\{x\in\mathbb{R}^N,~x_j=0\}$ for $j=1,\cdots,N$. Combining the symmetry of $\Omega$, each eigenfunction  $v_{i,\epsilon}$ can be decomposed into an even part plus an odd part. Based on the properties of this decomposition, we can conclude that each eigenfunction  $v_{i,\epsilon}$ is either oddly symmetric or evenly symmetric with respect to each variable $x_j~(j=1,\dots, N)$.
Therefore, there must exist at least one variable $x_j$ such that $v_{i,\epsilon}$ is oddly symmetric with respect to this variable.
Furthermore, owing $(b)$ in Theorem \ref{eigenfunctions-1} above, it can be known that  $v_{i,\epsilon}$ must be the first eigenfunction (20) in $\mathscr{D}_{j}^{-}$, hence $\lambda_{i,\epsilon}=\mu_{j,\epsilon}^{(1)}~(i=2,\cdots,N+1)$ for sufficiently small  $\epsilon$. This completes the proof of Theorem \ref{eigenfunctio1}.
\end{proof}

\section{Proof of Theorems \ref{thmprtb} and \ref{thmprtb-1}}\label{section8}
In this section, we are devoted to show that Theorem \ref{thmprtb} and Corollary \ref{emm-1}. 	
We first need estimate for $\lambda_{N+2,\epsilon}$, which is similar to the treatment in Lemmas \ref{baowenbei} and \ref{limitlama}.
\begin{lem}\label{ba}
It holds
\begin{equation}
\lambda_{N+2,\epsilon}=1+o(1)
		\end{equation}
as $\epsilon\rightarrow0$.
\end{lem}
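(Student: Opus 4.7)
The plan is to prove $\lambda_{N+2,\epsilon}=1+o(1)$ by establishing matching upper and lower bounds, essentially extending the variational argument of Lemma~\ref{baowenbei} to a test space enlarged by one extra dimension.

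For the lower bound, I would invoke the ordering $\lambda_{N+2,\epsilon}\ge\lambda_{N+1,\epsilon}$ of the eigenvalues of \eqref{ele-2} together with Lemma~\ref{limitlama}, which already gives $\lambda_{N+1,\epsilon}=1+o(1)$. Hence $\liminf_{\epsilon\to0}\lambda_{N+2,\epsilon}\ge 1$ with no further work.

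For the upper bound I would take the $(N+2)$-dimensional subspace
$\mathcal{Z}=\mathrm{span}\{u_{\epsilon},\eta_{1,\epsilon},\dots,\eta_{N,\epsilon},\eta_{N+1,\epsilon}\}$,
whose linear independence was already established just before Lemma~\ref{baowenbei}. The min--max characterization gives
$$\lambda_{N+2,\epsilon}\le\max_{f\in\mathcal{Z}\setminus\{0\}}\frac{\int_{\Omega}|\nabla f|^{2}\,dx}{\int_{\Omega}\bigl(pu_{\epsilon}^{p-1}+\epsilon qu_{\epsilon}^{q-1}\bigr)f^{2}\,dx}.$$
Writing $f=a_{0}u_{\epsilon}+\psi\omega_{\epsilon}+a_{N+1}\eta_{N+1,\epsilon}$ with $\omega_{\epsilon}=\sum_{j=1}^{N}a_{j}\partial_{x_{j}}u_{\epsilon}$, I would compute the Rayleigh quotient by testing \eqref{ele-1.1}, the identity $-\Delta\omega_{\epsilon}=(pu_{\epsilon}^{p-1}+\epsilon qu_{\epsilon}^{q-1})\omega_{\epsilon}$, and the relation of Lemma~\ref{nabla}, namely $-\Delta w=(pu_{\epsilon}^{p-1}+\epsilon qu_{\epsilon}^{q-1})w+\tfrac{2(p-q)}{p-1}\epsilon u_{\epsilon}^{q}$, against the corresponding multipliers. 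The contributions of $u_{\epsilon}$ and of $\psi\omega_{\epsilon}$ were already treated in the proof of Lemma~\ref{baowenbei} and produce a quotient of the form $1+O(\|u_{\epsilon}\|_{\infty}^{-(p+1)})$.

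The genuinely new ingredient is the contribution of $\eta_{N+1,\epsilon}=\psi w$. Two features distinguish it from the $\eta_{j,\epsilon}$, $1\le j\le N$. First, the extra inhomogeneous term $\tfrac{2(p-q)}{p-1}\epsilon u_{\epsilon}^{q}$ in Lemma~\ref{nabla} produces, after multiplying by $\psi w$ and integrating, a perturbative integral of order $\epsilon\int_{\Omega}u_{\epsilon}^{q}\psi w$. Rescaling via \eqref{miu-1} together with the pointwise decay \eqref{dus} and the bound \eqref{2-miu} on $\epsilon$, this term is of order $\|u_{\epsilon}\|_{\infty}^{p-q-2}\cdot\|u_{\epsilon}\|_{\infty}^{-(\frac{N-2}{2}(q-1)+\frac{2}{p-1}-\frac{N}{2})}$, which is a negative power of $\|u_{\epsilon}\|_{\infty}$ under the range of $(q,N)$ of Theorem~\ref{Figalli-1}; in particular it is $o(1)$. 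Second, the cutoff derivatives yield boundary remainders $\int|\nabla\psi|^{2}w^{2}$ and $\int(\nabla\psi\cdot\nabla w)\psi w$, both of which are $O(\|u_{\epsilon}\|_{\infty}^{-2})$ by \eqref{U-day2}. Cross terms between $u_{\epsilon}$ and $\eta_{N+1,\epsilon}$ are estimated exactly as in \eqref{twou1}--\eqref{twou}, using that $u_{\epsilon}$ and $(x-x_{\epsilon})\cdot\nabla u_{\epsilon}$ have the same scaling and that $w(x_{\epsilon})=\frac{2}{p-1}\|u_{\epsilon}\|_{\infty}$.

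The main obstacle I anticipate is the bookkeeping of all mixed integrals once $a_{N+1}\neq 0$ is allowed: the maximization over $(a_{0},\dots,a_{N+1})$ must still yield a ratio $1+o(1)$, and this requires verifying that the quadratic form in $a_{N+1}$ produced by the extra inhomogeneity does not shift the leading coefficient. Once this is checked, combining the upper bound $\limsup\lambda_{N+2,\epsilon}\le 1$ with the lower bound above gives the claim.
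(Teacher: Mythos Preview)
Your proposal is correct and follows essentially the same route as the paper: the lower bound comes from the eigenvalue ordering together with Lemma~\ref{limitlama}, and the upper bound from the min--max characterization using the $(N+2)$-dimensional test space $\mathrm{span}\{u_\epsilon,\eta_{1,\epsilon},\dots,\eta_{N+1,\epsilon}\}$. The paper handles the bookkeeping you flag by writing the Rayleigh quotient as $1+l/h$ and showing directly that $h\geq\delta>0$ while $l=O(\|u_\epsilon\|_\infty^{-2})$, which absorbs all cross terms (including the inhomogeneity from Lemma~\ref{nabla}) at once.
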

\begin{proof}
By virtue of Theorem \ref{remainder terms} and Lemma \ref{limitlama},
it is sufficient to prove $$\limsup_{\epsilon\rightarrow0}\lambda_{N+2,\epsilon}\leq1.$$
Referring to \eqref{varep}, we let $\tilde{\mathcal{Z}}$ be a vector space whose basis is $\{u_{\epsilon}\}\cup\{\eta_{j,\epsilon}: 1\leq j\leq N+1\},$
so that any nonzero function $\tilde{f}\in\tilde{\mathcal{Z}}\setminus\{0\}$ can be write as
\begin{equation}\label{nonzero-00}
\tilde{f}=a_0u_{\epsilon}+\psi\left(\sum_{j=1}^{N}a_j\frac{\partial u_{\epsilon}}{\partial x_j}+dw(x)\right):=a_0u_{\epsilon}+\psi\tilde{\omega}_{\epsilon},
\end{equation}
for some $a_0,~a_j,~d\in\mathbb{R}$.
Then we have
		\begin{equation}\label{minmax-00}
\begin{split}
			\lambda_{N+2,\epsilon}&=
\leq\max\limits_{\tilde{f}\in\tilde{\mathcal{Z}}\setminus\{0\}}\frac{\displaystyle\int_{\Omega}|\nabla \tilde{f}(x)|^2dx}{\displaystyle\int_{\Omega}\big(pu_{\epsilon}^{p-1}+\epsilon qu_{\epsilon}^{q-1}\big)f^2dx}
:=\max\limits_{\tilde{f}\in\tilde{\mathcal{Z}}\setminus\{0\}}(1+\frac{l}{h}),
\end{split}
		\end{equation}
where $h:=h_1+h_2+h_3$ and $l:=l_1+l_2+l_3$ with
\begin{equation*}
\begin{split}
&h_1=a_0^2\displaystyle\int_{\Omega}\big(pu_{\epsilon}^{p+1}+\epsilon qu_{\epsilon}^{q+1}\big),\\&
h_2=2a_0\displaystyle\int_{\Omega}\big(pu_{\epsilon}^{p}+\epsilon qu_{\epsilon}^{q}\big)\psi\left(\sum_{j=1}^{N}a_j\frac{\partial u_{\epsilon}}{\partial x_j}+dw(x)\right),\\&
h_3=\int_{\Omega}\big(pu_{\epsilon}^{p-1}+\epsilon qu_{\epsilon}^{q-1}\big)\psi^2\left(\sum_{j,k=1}^{N}a_ja_k\frac{\partial u_{\epsilon}}{\partial x_j}\frac{\partial u_{\epsilon}}{\partial x_k}+2d\sum_{j=1}^{N}a_j\frac{\partial u_{\epsilon}}{\partial x_j}w(x)+d^2w^2(x)\right),
\\&l_1=a_0^2\int_{\Omega}\big((1-p)u_{\epsilon}^{p+1}+\epsilon (1-q)u_{\epsilon}^{q+1}\big),\\&
l_2=2a_0\int_{\Omega}\big((1-p)u_{\epsilon}^{p}+\epsilon (1-q)u_{\epsilon}^{q}\big)\psi\left(\sum_{j=1}^{N}a_j\frac{\partial u_{\epsilon}}{\partial x_j}+dw(x)\right),\\&
l_3=\int_{\Omega}|\nabla\psi|^2\left(\sum_{j,k=1}^{N}a_ja_k\frac{\partial u_{\epsilon}}{\partial x_j}\frac{\partial u_{\epsilon}}{\partial x_k}+2d\sum_{j=1}^{N}a_j\frac{\partial u_{\epsilon}}{\partial x_j}w(x)+d^2w^2(x)\right).
\end{split}
\end{equation*}
Similar to the proof of in \cite{GP05}. It follows from \eqref{2-miu} and the proof of Lemma \ref{baowenbei} that
\begin{equation*}
h_2=O\Big(\frac{1}{\|u_{\epsilon}\|_{\infty}^{p+1}}\Big)+O\Big(\frac{\epsilon}{\|u_{\epsilon}\|_{\infty}^{q+1}}\Big)=O\Big(\frac{1}{\|u_{\epsilon}\|_{\infty}^{2}}\Big),
\end{equation*}
\begin{equation*}
l_2=O\Big(\frac{1}{\|u_{\epsilon}\|_{\infty}^{p+1}}\Big)+O\Big(\frac{\epsilon}{\|u_{\epsilon}\|_{\infty}^{q+1}}\Big)=O\Big(\frac{1}{\|u_{\epsilon}\|_{\infty}^{2}}\Big).
\end{equation*}
By \eqref{twou1} and \eqref{twou}, we have
\begin{equation*}
\begin{split}
h_3&=\sum_{j,k=1}^{N}a_ja_k\frac{\delta_j^{k}}{N}\int_{\mathbb{R}^{N}}\big(p\big\|u_{\epsilon}\big\|_{\infty}^{p-1}W^{p-1}+\epsilon q\big\|u_{\epsilon}\big\|_{\infty}^{q-1}W^{q-1}\big)\big|\nabla W\big|^2dx\\&
+\sum_{j=1}^{N}a_j\Big[\frac{2dp}{\|u_{\epsilon}\|_{\infty}^{\frac{(p-1)(N+1)}{2}-2p}}\int_{\Omega_{\epsilon}}U_{\epsilon}^{p-1}y_k\frac{\partial U_{\epsilon}}{\partial x_j}\frac{\partial U_{\epsilon}}{\partial x_k}+\frac{2d\epsilon q}{\|u_{\epsilon}\|_{\infty}^{\frac{(p-1)(N+1)}{2}-p-q}}\int_{\Omega_{\epsilon}}U_{\epsilon}^{q-1}y_k\frac{\partial U_{\epsilon}}{\partial x_j}\frac{\partial U_{\epsilon}}{\partial x_k}\Big]
\\&+pd^2\int_{\mathbb{R}^N}W^{p-1}(y)\Big(\frac{N(N-2)-|y|^2}{(1+|y|^2)^{\frac{N}{2}}}\Big)^2dy
+\epsilon qd^2\int_{\mathbb{R}^N}W^{p-1}(y)\Big(\frac{N(N-2)-|y|^2}{(1+|y|^2)^{\frac{N}{2}}}\Big)^2dy
\\&+O(\|u_{\epsilon}\|_{\infty}^{-(p+1)})+O(\|u_{\epsilon}\|_{\infty}^{-(q+1)})+o(1)\\&
=\sum_{j,k=1}^{N}a_ja_k\frac{\delta_j^{k}}{N}\int_{\mathbb{R}^{N}}\big(p\big\|u_{\epsilon}\big\|_{\infty}^{p-1}W^{p-1}+\epsilon q\big\|u_{\epsilon}\big\|_{\infty}^{q-1}W^{q-1}\big)\big|\nabla W\big|^2dx\\&
+pd^2\int_{\mathbb{R}^N}W^{p-1}(y)\Big(\frac{N(N-2)-|y|^2}{(1+|y|^2)^{\frac{N}{2}}}\Big)^2dy
+\epsilon qd^2\int_{\mathbb{R}^N}W^{p-1}(y)\Big(\frac{N(N-2)-|y|^2}{(1+|y|^2)^{\frac{N}{2}}}\Big)^2dy\\&
+2dp\sum_{j=1}^{N}a_jo(\|u_{\epsilon}\|_{\infty}^{\frac{2}{N-2}})+O\Big(\frac{\epsilon }{\|u_{\epsilon}\|_{\infty}^{\frac{(p-1)(N+1)}{2}-p-q}}\Big)+O(\|u_{\epsilon}\|_{\infty}^{-(q+1)})+o(1).
\end{split}
\end{equation*}
Finally, from the proof of  Lemmas \ref{baowenbei}, we have
$$l_3=O(\frac{1}{\|u_{\epsilon}\|_{\infty}^{2}}).$$
Therefore, via the estimates of $h$ and $l$ together, we conclude that
\begin{equation}\label{617}
\begin{split}
h&\geq C_1(|a|\big\|u_{\epsilon}\big\|_{\infty}^{p-1}+\epsilon q\big\|u_{\epsilon}\big\|_{\infty}^{q-1})+C_2d^2p+C_3d^2\epsilon q\geq\delta>0,
\end{split}
\end{equation}
for some positive constant $\delta$ and
\begin{equation}\label{618}
\begin{split}
l=a_0^2\int_{\Omega}\big((1-p)u_{\epsilon}^{p+1}+\epsilon (1-q)u_{\epsilon}^{q+1}\big)+O(\frac{1}{\|u_{\epsilon}\|_{\infty}^{2}})\leq C\frac{1}{\|u_{\epsilon}\|_{\infty}^{2}}.
\geq\delta>0
\end{split}
\end{equation}
It follows from \eqref{minmax-00} and \eqref{617}-\eqref{618} that
\begin{equation*}
\limsup_{\epsilon\rightarrow0}\lambda_{N+2,\epsilon}\leq \limsup_{\epsilon\rightarrow0}\max\limits_{\tilde{f}\in\tilde{\mathcal{Z}}\setminus\{0\}}\big(1+\frac{l}{h}\big)\leq1,
\end{equation*}
and the conclusion follows.
\end{proof}

 Now we are ready to prove Theorem \ref{thmprtb}.
\begin{proof}[Proof of Theorem \ref{thmprtb}]	
We now proceed similarly to Lemma \ref{identity}. For the rescaled eigenfunctions $\tilde{v}_{n+2,\epsilon}$, we obtain
\begin{equation}\label{viep-1}
V_{N+2,\epsilon}(x)\rightarrow\sum_{k=1}^{N}\frac{\alpha_kx_k}{(N(N-2)+|x|^2)^{\frac{N}{2}}}+\beta\frac{N(N-2)-|x|^2}{(N(N-2)+|x|^2)^{\frac{N}{2}}}\quad\mbox{in}\hspace{2mm} C_{loc}^{1}(\mathbb{R}^n)
\end{equation}
with $(\alpha_1,\cdots,\alpha_{N},\beta)\neq(0,\cdots,0)$ in $\mathbb{R}^{N+1}$. We claim that $(\alpha_1,\cdots,\alpha_{N})=(0,\cdots,0)$. Observe that once this claim is established, \eqref{rtbdy46} will follow. Indeed,  arguing in the same way to Lemma \ref{wanfan}, we know if the eigenfunctions $v_{N+2,\epsilon}$ and $v_{j,\epsilon}$ are orthogonal in $H_{0}^{1}(\Omega)$ for $j=2,\cdots,N+1$. Then we have
\begin{equation}\label{pingjiehuoguo}
		\begin{split}	
\int_{\Omega}\big(pu_{\epsilon}^{p-1}+\epsilon qu_{\epsilon}^{q-1}\big)v_{N+2,\epsilon}(x)v_{j,\epsilon}(x)dx=0,
\end{split}
		\end{equation}
which implies $(a,a^{(m)})=0$ for $m=2,\cdots, N+1$ by rescaling the functions $u_{\epsilon}$, $v_{k,\epsilon}$ and $v_{l,\epsilon}$ in \eqref{pingjiehuoguo} and \eqref{viep-1}. Hence the claim follows from Lemma \ref{wanfan}.

Next we prove \eqref{rt}. Combining \eqref{dus}, \eqref{decay22}, Lemma \ref{regular} and Lemma \ref{ba}, then we get
\begin{equation}\label{number-1}
\big\|u_{\epsilon}\big\|_{\infty}^2v_{n+2,\epsilon}(x)\rightarrow \beta p\frac{\omega_N}{2}\Big(\frac{\Gamma(\frac{N}{2})\Gamma(2)-\Gamma(\frac{N}{2}+1)\Gamma(1)}{\Gamma(N/2+2)}\Big)G(x,x_0)\quad\mbox{in}\hspace{2mm}C^{1,\alpha}(\omega),
\end{equation}
where the convergence is in $C^{1,\alpha}(\omega)$ with $\omega$ any compact set of $\bar{\Omega}$ not containing $x_0$, $x_0$ is the limit point of $x_\epsilon$.
Owing to $\beta\neq0$, then similar to argument of \eqref{refor} in the proof of Theorem \ref{Figalli}.
\begin{equation*}
1-\lambda_{N+2,\varepsilon}=\frac{1}{\|u_{\epsilon}\|_{\infty}^{2}}\bigg[\frac{(N-2)R_N|\phi(x_0)|+\frac{2(p-q)}{p-1}B_{p,q}|\phi(x_0)|\mathcal{E}_N}{\mathcal{F}_N}+o(1)
\bigg]
\end{equation*}
where
$$\mathcal{C}_N=\frac{(N-2)R_N|\phi(x_0)|+\frac{2(p-q)}{p-1}B_{p,q}|\phi(x_0)|\mathcal{E}_N}{\mathcal{F}_N}<0.$$
Equality \eqref{rt} is thus established.
\end{proof}

\begin{proof}[Proof of Theorem \ref{thmprtb-1}]
We first prove that $\lambda_{N+2,\epsilon}$ is simple. Then we may assume by contradiction that there exist at least two eigenfunctions $v_{N+2,\epsilon}^{(1)}$ and $v_{N+2,\epsilon}^{(2)}$ corresponding to $\lambda_{N+2,\epsilon}$ orthogonal in the space $H_{0}^{1}(\Omega)$, and so that
\begin{equation*}
		\begin{split}	\int_{\Omega}\big(pu_{\epsilon}^{p-1}+\epsilon qu_{\epsilon}^{q-1}\big)v_{N+2,\epsilon}^{(1)}(x)v_{N+2,\epsilon}^{(2)}(x)dx=0.
\end{split}
\end{equation*}
Hence
\begin{equation*}
\begin{split}
\beta_1\beta_2\int_{\mathbb{R}^N}\Big(pW^{p-1}(x)+\epsilon qW^{q-1}(x) \Big)\Big(\frac{(N(N-2)-|x|^2}{(N(N-2)+|x|^2)^{\frac{N}{2}}}\Big)^2dx=0,
\end{split}
\end{equation*}
where $\beta_1$ and $\beta_2$ are the convergence coefficients of $v_{N+2,\epsilon}^{(1)}$ and $v_{N+2,\epsilon}^{(2)}$ as in \eqref{rtbdy46}, respectively. This gives a contradiction.

Next, we show that $v_{N+2,\epsilon}$ has only two nodal regions.  We can follow the same proof as in the last of \cite{GP05} except minor modifications.
Therefore, we will just sketch it. Define
\begin{equation}\label{gama}
h(x)=\frac{1-|x|^2}{(1+|x|^2)^{\frac{N}{2}}}>0\hspace{2mm}\mbox{in the ball}\hspace{2mm}B_1(0);\hspace{2mm}h(x)<0\hspace{2mm}\mbox{in}\hspace{2mm}\mathbb{R}^N\setminus B_1(0).
\end{equation}
Then we have that
$v_{N+2,\epsilon}>0\hspace{2mm}\mbox{in}\hspace{2mm}\big\{|x-x_{\epsilon}|<\frac{1}{2}\|u_{\epsilon}\|_{\infty}^{-\frac{p-1}{2}}\big\}.$
On the other hand, we find $v_{N+2,\epsilon}<0\hspace{2mm}\mbox{on} \hspace{2mm} \{|x-x_{\epsilon}|=\frac{1}{2}\|u_{\epsilon}\|_{\infty}^{-\frac{p-1}{2}}R\}.$
Thus, by the maximum principle and $v_{N+2,\epsilon}<0$ on $\Omega^{\prime\prime}$,
\begin{equation}\label{LL}
v_{N+2,\epsilon}<0\hspace{2mm}\mbox{in}\hspace{2mm}\Omega\setminus\{|x-x_{\epsilon}|<\frac{1}{2}\|u_{\epsilon}\|_{\infty}^{-\frac{p-1}{2}}R\}.
\end{equation}
Hence the conclusion follows by the argument of contradiction.

Finally, since \eqref{LL}, we find $v_{N+2,\epsilon}$ is always negative, and then the closure of the nodal set of $v_{N+2,\epsilon}$ does not touch the boundary.
This concludes the proof of Theorem \ref{thmprtb-1}.
\end{proof}	

\begin{proof}[Proof of Corollary \ref{emm-1}]
The conclusion follows by exploiting Theorem \ref{Figalli-1}, Theorem \ref{remainder terms} and \eqref{rt}.
\end{proof}
	
\section{The nondegeneracy result}
In this section we prove Theorem \ref{Figa}.
Let $\{\tilde{u}_{\epsilon}\}$ be a
family of solutions of \eqref{ele-1.1-1} which blow-up at a point $x_0$ and suppose $\tilde{u}_{\epsilon}$
assumes its maximum at $x_\epsilon$ such that $\lim\limits_{\epsilon\rightarrow0}x_{\epsilon}=x_0$.  Considering the rescaled function
\begin{equation}\label{1-miu}
\tilde{U}_{\epsilon}(x)=\frac{1}{\|\tilde{u}_{\epsilon}\|_{\infty}}\tilde{u}_{\epsilon}(\|u_{\epsilon}\|_{\infty}^{-\frac{p-1}{2}}x+x_{\epsilon})\quad\mbox{for}\quad x\in\tilde{\Omega}_{\epsilon}=\big\|\tilde{u}_{\epsilon}\big\|_{\infty}^{\frac{p-1}{2}}(\Omega-x_{\epsilon}).
\end{equation}	
In order to conclude the proof of Theorem \ref{Figa}, we argue by contradiction and let us assume that there exists $\tilde{w}_{\epsilon}$ $\not\equiv0$ which solves \eqref{nondegene} and by the $\tilde{W}_{\epsilon}$ the rescaled functions
\begin{equation}\label{1-1-miu}
\tilde{W}_{\epsilon}(x)=\frac{1}{\|\tilde{u}_{\epsilon}\|_{\infty}}\tilde{w}_{\epsilon}(\|\tilde{u}_{\epsilon}\|_{\infty}^{-\frac{p-1}{2}}x+x_{\epsilon})\quad\mbox{for}\quad x\in\tilde{\Omega}_{\epsilon}=\big\|\tilde{u}_{\epsilon}\big\|_{\infty}^{\frac{p-1}{2}}(\Omega-x_{\epsilon}).
\end{equation}	

Next, we give the rate of blow-up of the
maximum of the solutions, which will be used later to conclude the proof of Theorem \ref{Figa}. As aforementioned, Molle and Pistoia in \cite{Molle} studied the existence of solutions which blow-up and concentrate at a single point of $\Omega$. More precisely: there exist is a blow-up solution of \eqref{ele-1.1-1} having the form
$$
\tilde{u}_{\epsilon}=PW[x_{\epsilon},\mu_{\epsilon}^{-1}\epsilon^{\frac{2}{N-6+q(N-2)}}]+R_{\epsilon}
$$
which blows-up at a point $x_0$ with the rate of the concentration
$\mu_{0}$ such that $\mu_{\epsilon}^{-1}\rightarrow\mu_0^{-1}$, $x_{\epsilon}\rightarrow x_0\in\Omega$  and
$R_{\epsilon}\rightarrow0$ in $H_{0}^1(\Omega)$ as $\epsilon\rightarrow0$.
Then we find that $u_{\epsilon}$ are uniformly bounded near $\partial\Omega$ with respect to $\epsilon>0$ small and blow-up point an interior point of $\Omega$. Moreover, we have $\kappa$ is continuous on $\Omega$ and $\kappa(\|\tilde{u}_{\epsilon}\|_{\epsilon}^{-(p-1)/2}y+x_{\epsilon})\rightarrow\kappa(x_0)$ uniformly on compact sets of $\mathbb{R}^N$. Therefore, by using a similar proof as in Theorem 2 from \cite{HANZCHAO} (also see \cite{GM,Rey-1989}), we can establish the following asymptotic result and we omit the details.
\begin{thm}
Assume that $N\geq3$, $q\in(\max\{1,\frac{6-N}{N-2}\},2^{\ast}-1)$ and $\epsilon$ is sufficiently small. Let $\tilde{u}_\epsilon$ be a solution of \eqref{ele-1.1-1}.
 Then if $x_{\epsilon}$ is a point such that $\tilde{u}_{\epsilon}(x_{\epsilon})=\|\tilde{u}_{\epsilon}\|_{L^{\infty}(\Omega)}$, we have that $x_{\epsilon}\rightarrow x_0\in\Omega$ as $\epsilon\rightarrow0$ and the following results hold:
\begin{equation}\label{1-Fn}
\lim\limits_{\epsilon\rightarrow0}\epsilon\big\|\tilde{u}_\epsilon\big\|_{\infty}^{q+2-p}=B_{p,q}\frac{\phi(x_0)}{\kappa(x_0)},
\end{equation}
where $B_{p,q}$ can be found in \eqref{Fn}.
Moreover, it holds that
\begin{equation}\label{1-ifini}
\lim\limits_{\epsilon\rightarrow0^{+}}\big\|\tilde{u}_{\epsilon}\big\|_{\infty}\tilde{u}_{\epsilon}(x)=\frac{1}{N}(N(N-2))^{\frac{N}{2}}\omega_NG(x,x_{0})\hspace{2mm}\mbox{in} \hspace{2mm}C^{1,\alpha}(\Omega\setminus\{x_0\}).
\end{equation}
where $\omega_N$ is the area of the unit sphere in $\mathbb{R}^N$, and $G$ denotes the Green's function of the Laplacian $-\Delta$ in $\Omega$ with the Dirichlet boundary condition.
There exists a positive constant $C$, independent of $\epsilon$, such that
\begin{equation}\label{1-dus}
\tilde{u}_{\epsilon}(x)\leq C\big\|\tilde{u}_{\epsilon}\big\|_{\infty}\left(\frac{N(N-2)}{N(N-2)+\|\tilde{u}_{\epsilon}\|^{p-1}_{\infty}|x-x_{\epsilon}|^2}\right)^{\frac{N-2}{2}}.
\end{equation}
\end{thm}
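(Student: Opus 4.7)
The plan is to adapt the classical blow-up analysis of Han \cite{HANZCHAO}, Rey \cite{Rey-1989} and Gao--Moroz \cite{GM}, replacing the constant coefficient of the subcritical term by $\kappa(x)$ and tracking its value at the blow-up point. First I would rescale at the maximum point $x_\epsilon$ via \eqref{1-miu}: $\tilde U_\epsilon$ satisfies
\begin{equation*}
-\Delta \tilde U_\epsilon=\tilde U_\epsilon^p+\epsilon\,\kappa\bigl(\|\tilde u_\epsilon\|_\infty^{-(p-1)/2}x+x_\epsilon\bigr)\,\|\tilde u_\epsilon\|_\infty^{-(p-q)}\,\tilde U_\epsilon^q\quad\text{in}\ \tilde\Omega_\epsilon,
\end{equation*}
with $0<\tilde U_\epsilon\le\tilde U_\epsilon(0)=1$. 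Since $p>q$ and $\kappa\in C^{2}(\overline\Omega)$ is bounded, the subcritical term vanishes uniformly on compact subsets of $\mathbb R^{N}$, and standard elliptic regularity together with the classification of ground states forces $\|\tilde u_\epsilon\|_\infty\to\infty$ and $\tilde U_\epsilon\to W[0,1]$ in $C^2_{\mathrm{loc}}(\mathbb R^{N})$, exactly as in the $\kappa\equiv 1$ case.

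To establish the pointwise bound \eqref{1-dus}, I would iterate on the Green representation
\begin{equation*}
\tilde u_\epsilon(x)=\int_\Omega G(x,y)\bigl(\tilde u_\epsilon^p(y)+\epsilon\kappa(y)\tilde u_\epsilon^q(y)\bigr)\,dy,
\end{equation*}
using $|G(x,y)|\lesssim|x-y|^{2-N}$ and \eqref{Lem6.1-0}. A crude bound combining Step~1 inside a ball of radius $\|\tilde u_\epsilon\|_\infty^{-(p-1)/2}$ around $x_\epsilon$ with the trivial $L^\infty$ bound outside is inserted into the representation; successive substitutions improve the decay exponent and, after finitely many iterations, the bubble bound \eqref{1-dus} is reached. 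For the outer limit \eqref{1-ifini} I multiply the representation by $\|\tilde u_\epsilon\|_\infty$ and change variables $y=\|\tilde u_\epsilon\|_\infty^{-(p-1)/2}\xi+x_\epsilon$; by \eqref{1-dus} and dominated convergence the critical piece yields
\begin{equation*}
\|\tilde u_\epsilon\|_\infty\,\tilde u_\epsilon(x)\longrightarrow G(x,x_0)\int_{\mathbb R^{N}}W^p\,d\xi=\tfrac{1}{N}(N(N-2))^{N/2}\omega_N\,G(x,x_0)
\end{equation*}
locally uniformly on $\Omega\setminus\{x_0\}$, promoted to $C^{1,\alpha}$ by interior elliptic regularity. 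The subcritical piece contributes $O\bigl(\epsilon\|\tilde u_\epsilon\|_\infty^{q-p}\bigr)$ and is negligible by virtue of the rate proven in Step~4.

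To extract the blow-up rate \eqref{1-Fn}, I would apply a Pohozaev identity. Multiplying the equation by $(x-x_\epsilon)\cdot\nabla \tilde u_\epsilon$, integrating by parts, using $\tilde u_\epsilon|_{\partial\Omega}=0$ and the criticality $p+1=2N/(N-2)$ (which eliminates the $u^{p+1}$ term and the kinetic energy contribution), one obtains
\begin{equation*}
\tfrac{1}{2}\int_{\partial\Omega}(x-x_\epsilon)\cdot\nu\,|\nabla\tilde u_\epsilon|^{2}\,dS=\epsilon\Bigl[\tfrac{N}{q+1}-\tfrac{N-2}{2}\Bigr]\int_\Omega\kappa\,\tilde u_\epsilon^{q+1}\,dx+\tfrac{\epsilon}{q+1}\int_\Omega(x-x_\epsilon)\cdot\nabla\kappa\,\tilde u_\epsilon^{q+1}\,dx.
\end{equation*}
The left-hand side, by Step~3 and an identity of type \eqref{GX0}, equals $\|\tilde u_\epsilon\|_\infty^{-2}\bigl(-(N-2)C_N\phi(x_0)+o(1)\bigr)$ for an explicit dimensional constant $C_{N}$ coming from the square of $\bigl(\frac{1}{N}(N(N-2))^{N/2}\omega_N\bigr)^{2}$. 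The first term on the right, after rescaling via Step~1 and using \eqref{1-dus} as a dominant, equals $\epsilon\,\kappa(x_0)\|\tilde u_\epsilon\|_\infty^{q+1-(p+1)}\bigl[\frac{N}{q+1}-\frac{N-2}{2}\bigr]\int_{\mathbb R^{N}}W^{q+1}\,d\xi+o(\cdot)$. Matching exponents forces the balance $\epsilon\sim\|\tilde u_\epsilon\|_\infty^{p-q-2}$ and reading off the constants gives $\epsilon\|\tilde u_\epsilon\|_\infty^{q+2-p}\to B_{p,q}\phi(x_0)/\kappa(x_0)$ with $B_{p,q}$ as in \eqref{Fn}.

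The main technical obstacle is Step~2: the bootstrap for \eqref{1-dus} is delicate because $\kappa\tilde u_\epsilon^q$ decays only like $|x-x_\epsilon|^{-(N-2)q}$, slower than the critical term, and the lower bound $q>(6-N)/(N-2)$ is precisely what makes each iteration strictly improve the exponent and close the argument. A secondary delicate point is the $\nabla\kappa$ term in Step~4: using $\kappa\in C^{2}(\overline\Omega)$, one Taylor-expands $\nabla\kappa(x)=\nabla\kappa(x_\epsilon)+O(|x-x_\epsilon|)$; the leading constant $\nabla\kappa(x_\epsilon)$ integrates to zero against the spherically symmetric leading-order density $W^{q+1}$, so only the $O(|x-x_\epsilon|)$ remainder contributes, producing a term of strictly lower order than the leading one. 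Consistency with the blow-up rate in \eqref{1-Fn} then propagates back and legitimizes the $o(1)$ terms used in Steps~2 and~3.
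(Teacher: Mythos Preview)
Your proposal is correct and follows exactly the route the paper indicates: the paper does not supply a proof but simply observes that $\kappa(\|\tilde u_\epsilon\|_\infty^{-(p-1)/2}y+x_\epsilon)\to\kappa(x_0)$ uniformly on compacta and then invokes the arguments of Han~\cite{HANZCHAO}, Rey~\cite{Rey-1989} and Gao--Ma~\cite{GM}, omitting all details. Your outline (rescaling to the bubble, Green-function bootstrap for \eqref{1-dus}, Green representation for \eqref{1-ifini}, and a Pohozaev identity with the extra $\nabla\kappa$ term shown to be of lower order) is precisely the adaptation the paper has in mind.
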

\begin{lem}\label{1-UV1}
It holds
\begin{equation}\label{WSTAR}
|\tilde{W}_{\epsilon}(x)|\leq C\Big(\frac{1}{1+|y|^2/N(N-2)}\Big)^{\frac{N-2}{2}}\quad \mbox{for any}\quad x\in\mathbb{R}^N.
\end{equation}
\end{lem}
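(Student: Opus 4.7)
The plan is to mirror, almost word for word, the proof of the decay estimate \eqref{day2} for the eigenfunctions $V_{i,\epsilon}$ given earlier in the paper. Since we are in a proof by contradiction, we may (and will) normalize $\tilde{w}_\epsilon$ so that $\|\tilde{W}_\epsilon\|_{L^\infty(\tilde{\Omega}_\epsilon)}=1$, i.e.\ $\|\tilde{w}_\epsilon\|_{L^\infty(\Omega)}=\|\tilde{u}_\epsilon\|_\infty$. Under this normalization, proving \eqref{WSTAR} is equivalent, after undoing the rescaling \eqref{1-1-miu}, to establishing the pointwise bound
\begin{equation*}
|\tilde{w}_\epsilon(x)|\leq C\big\|\tilde{u}_\epsilon\big\|_\infty\left(\frac{N(N-2)}{N(N-2)+\|\tilde{u}_\epsilon\|_\infty^{p-1}|x-x_\epsilon|^2}\right)^{\frac{N-2}{2}}\quad\text{for }x\in\Omega,
\end{equation*}
which is precisely the analogue of \eqref{decay22}.

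First I would use the Green's representation of the linearized equation \eqref{nondegene} to write
\begin{equation*}
\tilde{w}_\epsilon(x)=p\int_{\Omega} G(x,y)\tilde{u}_\epsilon^{p-1}\tilde{w}_\epsilon\,dy+\epsilon q\int_{\Omega} G(x,y)\kappa(y)\tilde{u}_\epsilon^{q-1}\tilde{w}_\epsilon\,dy.
\end{equation*}
Since $\kappa\in C^2(\overline{\Omega})$, it is uniformly bounded on $\overline{\Omega}$, so the factor $\kappa(y)$ is harmless and contributes only a multiplicative constant in the second integral. Plugging in the pointwise bound \eqref{1-dus} on $\tilde{u}_\epsilon$ together with the normalization $|\tilde{w}_\epsilon|\leq\|\tilde{u}_\epsilon\|_\infty$ and using Lemma \ref{Lem6.1} with a suitable exponent $\sigma\in(0,N-2)$, we get an initial decay of the form
\begin{equation*}
|\tilde{w}_\epsilon(x)|=O\!\left(\big\|\tilde{u}_\epsilon\big\|_\infty\frac{N(N-2)}{(N(N-2)+\|\tilde{u}_\epsilon\|_\infty^{(p-1)/2}|x-x_\epsilon|)^{2}}\right)+O(\epsilon\|\tilde{u}_\epsilon\|_\infty).
\end{equation*}

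Then I would bootstrap: inserting this sharper bound back into the integral representation and invoking Lemma \ref{Lem6.1} again doubles the exponent, giving decay of order $4$, then $6$, etc., exactly as in the proof of \eqref{decay22}. After a finite number of iterations the exponent exceeds $N-2$, at which point the standard convolution with the Green kernel yields the target decay
\begin{equation*}
|\tilde{w}_\epsilon(x)|\leq C\big\|\tilde{u}_\epsilon\big\|_\infty\Big(\frac{N(N-2)}{N(N-2)+\|\tilde{u}_\epsilon\|_\infty^{p-1}|x-x_\epsilon|^2}\Big)^{\frac{N-2}{2}}.
\end{equation*}
Rescaling via \eqref{1-1-miu} converts this into \eqref{WSTAR}.

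The main (mild) obstacle is the subcritical term $\epsilon\kappa u_\epsilon^{q-1}\tilde{w}_\epsilon$: at each step of the bootstrap it produces an error of size $O(\epsilon^k\|\tilde{u}_\epsilon\|_\infty)$, and one must verify that the bound \eqref{2-miu} (which still holds in the variable-$\kappa$ setting thanks to \eqref{1-Fn}) forces these error terms to be absorbed into the principal $W$-type decay. Since $q>\max\{1,(6-N)/(N-2)\}$, a finite number of iterations is enough to make this correction negligible compared to the leading decay, and the argument closes in the same way as the proof of \eqref{day2}.
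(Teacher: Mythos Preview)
Your proposal is correct and is exactly the approach the paper has in mind: the paper's own proof reads in full ``It is very similar the proof of \eqref{day2}, so we omit it here.'' Your write-up simply fills in those omitted details---the Green representation, the use of \eqref{1-dus} and Lemma~\ref{Lem6.1}, and the finite bootstrap---while correctly noting that the extra factor $\kappa\in C^2(\overline{\Omega})$ is bounded and hence harmless, and that the normalization $\|\tilde{W}_\epsilon\|_\infty=1$ (used later in Step~3 of the contradiction argument) is the natural one.
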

\begin{proof}
It is very similar the proof of \eqref{day2}, so we omit it here.
\end{proof}
Similar to the argument of Lemma \ref{UV1}, we have
\begin{lem}\label{UV1-1}
We have
$\tilde{U}_{\epsilon}\rightarrow W(x)$ and $\tilde{W}_{\epsilon}\rightarrow \tilde{w}_{0}$ in $C_{loc}^{2}(\mathbb{R}^N)$, where $\tilde{w}_0$ solves
\begin{equation}\label{WP}
-\Delta \tilde{w}_0=pW^{p-1}\tilde{w}_0\quad \mbox{in}\quad \mathbb{R}^N,
\end{equation}
and $$W(x)=\Big(\frac{1}{1+|x|^2/N(N-2)}\Big)^{\frac{N-2}{2}}.$$
\end{lem}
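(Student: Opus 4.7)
The plan is to derive both convergences from standard interior elliptic regularity, after showing that the rescaled equations have bounded coefficients and that the natural limit equation holds in the limit.

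First, I would write down the equation satisfied by $\tilde{U}_\epsilon$ on its natural domain $\tilde{\Omega}_\epsilon$. Using \eqref{1-miu} and the equation for $\tilde{u}_\epsilon$, a direct scaling gives
\begin{equation*}
-\Delta \tilde{U}_\epsilon = \tilde{U}_\epsilon^{\,p} + \epsilon\, \|\tilde u_\epsilon\|_\infty^{-(p-q)}\kappa\!\left(\|\tilde u_\epsilon\|_\infty^{-\frac{p-1}{2}}x+x_\epsilon\right)\tilde{U}_\epsilon^{\,q}\quad\text{in }\tilde\Omega_\epsilon.
\end{equation*}
By the blow-up rate \eqref{1-Fn}, one has $\epsilon\|\tilde u_\epsilon\|_\infty^{-(p-q)}=O(\|\tilde u_\epsilon\|_\infty^{-2})\to 0$, and from \eqref{1-dus} the rescaled solution satisfies $0\le \tilde U_\epsilon(x)\le C\bigl(1+|x|^2/N(N-2)\bigr)^{-(N-2)/2}$ on any compact set. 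Since $\tilde\Omega_\epsilon$ exhausts $\mathbb{R}^N$ (as $\|\tilde u_\epsilon\|_\infty\to\infty$), the right-hand side of the equation is uniformly bounded in $L^\infty_{\mathrm{loc}}$, and $\kappa$ is $C^2$ and bounded. Standard $L^p$ interior estimates then give a uniform $W^{2,r}_{\mathrm{loc}}$ bound for every $r<\infty$, hence a $C^{1,\alpha}_{\mathrm{loc}}$ bound; re-inserting this into the equation one upgrades to $C^{2,\alpha}_{\mathrm{loc}}$ by Schauder theory.

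Next, I would extract a subsequence converging in $C^2_{\mathrm{loc}}(\mathbb{R}^N)$ to some $U_0$ with $0\le U_0\le W$ and $U_0(0)=1$. Passing to the limit in the rescaled equation and using that the lower-order term disappears, $U_0$ solves $-\Delta U_0=U_0^{\,p}$ on $\mathbb{R}^N$. The classification of positive solutions of the critical equation (Caffarelli--Gidas--Spruck, recalled in \eqref{bec}) together with the normalization $U_0(0)=1=\|U_0\|_\infty$ forces $U_0=W$, and the standard subsequence argument then yields convergence of the full family $\tilde U_\epsilon\to W$ in $C^2_{\mathrm{loc}}(\mathbb{R}^N)$.

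The argument for $\tilde W_\epsilon$ is entirely parallel. By \eqref{1-1-miu} and \eqref{nondegene}, the rescaled function satisfies
\begin{equation*}
-\Delta \tilde W_\epsilon = p\,\tilde U_\epsilon^{\,p-1}\tilde W_\epsilon + \epsilon q\, \|\tilde u_\epsilon\|_\infty^{-(p-q)}\kappa\!\left(\|\tilde u_\epsilon\|_\infty^{-\frac{p-1}{2}}x+x_\epsilon\right)\tilde U_\epsilon^{\,q-1}\tilde W_\epsilon\quad\text{in }\tilde\Omega_\epsilon.
\end{equation*}
The pointwise bound \eqref{WSTAR} makes $\tilde W_\epsilon$ uniformly bounded on compacts; combined with the uniform bound on $\tilde U_\epsilon$ and the vanishing of the coefficient $\epsilon\|\tilde u_\epsilon\|_\infty^{-(p-q)}$, the right-hand side is again uniformly bounded in $L^\infty_{\mathrm{loc}}$. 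Interior $L^p$ estimates plus Schauder yield uniform $C^{2,\alpha}_{\mathrm{loc}}$ bounds on $\tilde W_\epsilon$. Extracting a convergent subsequence and using the already-established $C^2_{\mathrm{loc}}$ convergence $\tilde U_\epsilon\to W$ to pass to the limit in the linear coefficients, the limit $\tilde w_0$ solves \eqref{WP}.

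The only delicate point is verifying that the $\epsilon$-perturbation in the rescaled equations is genuinely negligible at the level of $C^2_{\mathrm{loc}}$ convergence, which requires the precise rate \eqref{1-Fn} together with $q<p$; everything else is a bootstrap of known interior estimates. Note that unlike for $\tilde U_\epsilon$, one does not claim uniqueness of the limit $\tilde w_0$ here, only that every cluster point solves \eqref{WP}; the selection of a specific nontrivial $\tilde w_0$ will be handled in the subsequent contradiction argument for Theorem \ref{Figa}.
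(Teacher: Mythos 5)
Your proposal is correct and follows essentially the same route as the paper: the paper proves this lemma by invoking the argument of Lemma \ref{UV1}, i.e.\ uniform pointwise bounds from the decay estimates, vanishing of the $\epsilon$-term in the rescaled equation, standard interior elliptic regularity to get $C^{2}_{loc}$ compactness, passage to the limit in the equation, and identification of the limit of $\tilde U_\epsilon$ with $W$ by uniqueness/classification. Your bookkeeping of the perturbation via \eqref{1-Fn} (indeed $\epsilon\|\tilde u_\epsilon\|_\infty^{q-p}=O(\|\tilde u_\epsilon\|_\infty^{-2})$) and your remark that only cluster points of $\tilde W_\epsilon$ are characterized here match the paper's treatment.
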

\begin{lem}\label{1-conseq}
It holds
\begin{equation*}
\begin{split}
&\int_{\partial\Omega}((x-y),\nu)\frac{\partial \tilde{u}_{\epsilon}}{\partial\nu}\frac{\partial \tilde{w}_{\epsilon}}{\partial\nu}dS_x=\frac{2(p-q)}{p-1}\epsilon \int_{\Omega}\kappa(x)\tilde{u}_{\epsilon}^q\tilde{w}_{\epsilon}dx+\epsilon\int_{\Omega}(x-y)\cdot\nabla\kappa(x)\tilde{u}_{\epsilon}^q\tilde{w}_{\epsilon}dx
\end{split}
\end{equation*}
for any $y\in\mathbb{R}^N$ and
where $\nu=\nu(x)$ denotes the unit outward normal to the boundary $\partial \Omega$.
\end{lem}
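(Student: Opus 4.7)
The plan is to mimic the structure of Lemma \ref{nabla} and Corollary \ref{conseq}, adjusted for the fact that $\kappa$ now depends on $x$, which produces an extra $\nabla\kappa$ term in the Pohozaev identity.

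First I would introduce the vector field analogue of $w$ from \eqref{varep},
\begin{equation*}
\widetilde{W}(x):=(x-y)\cdot\nabla \tilde{u}_{\epsilon}(x)+\frac{2}{p-1}\tilde{u}_{\epsilon}(x),
\end{equation*}
and compute $-\Delta \widetilde{W}$. Using the identity $\Delta((x-y)\cdot\nabla u)=2\Delta u+(x-y)\cdot\nabla\Delta u$ together with the equation $-\Delta\tilde{u}_{\epsilon}=\tilde{u}_{\epsilon}^{p}+\epsilon\kappa(x)\tilde{u}_{\epsilon}^{q}$, and expanding $(x-y)\cdot\nabla(\tilde{u}_{\epsilon}^{p}+\epsilon\kappa\tilde{u}_{\epsilon}^{q})$, one obtains after gathering terms
\begin{equation*}
-\Delta\widetilde{W}=\bigl(p\tilde{u}_{\epsilon}^{p-1}+\epsilon q\kappa\tilde{u}_{\epsilon}^{q-1}\bigr)\widetilde{W}+\frac{2(p-q)}{p-1}\epsilon\kappa(x)\tilde{u}_{\epsilon}^{q}+\epsilon\bigl((x-y)\cdot\nabla\kappa(x)\bigr)\tilde{u}_{\epsilon}^{q}.
\end{equation*}
This is the exact analogue of Lemma \ref{nabla}; the only new contribution, arising because $\kappa$ is no longer constant, is the term $\epsilon\bigl((x-y)\cdot\nabla\kappa\bigr)\tilde{u}_{\epsilon}^{q}$. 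The arithmetic to watch is the combination $2+\tfrac{2}{p-1}-\tfrac{2q}{p-1}=\tfrac{2(p-q)}{p-1}$, which yields the coefficient in front of $\epsilon\kappa\tilde{u}_{\epsilon}^{q}$.

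Next I would multiply the above identity by $\tilde{w}_{\epsilon}$, integrate over $\Omega$, and perform two integrations by parts. Since $\tilde{w}_{\epsilon}=0$ on $\partial\Omega$, one has $\int_{\Omega}(-\Delta\widetilde{W})\tilde{w}_{\epsilon}=\int_{\Omega}\nabla\widetilde{W}\cdot\nabla\tilde{w}_{\epsilon}$. Integrating by parts the other way and using the linearized equation \eqref{nondegene} for $\tilde{w}_{\epsilon}$ gives
\begin{equation*}
\int_{\Omega}\nabla\widetilde{W}\cdot\nabla\tilde{w}_{\epsilon}\,dx=\int_{\Omega}\bigl(p\tilde{u}_{\epsilon}^{p-1}+\epsilon q\kappa\tilde{u}_{\epsilon}^{q-1}\bigr)\widetilde{W}\tilde{w}_{\epsilon}\,dx+\int_{\partial\Omega}\widetilde{W}\,\frac{\partial\tilde{w}_{\epsilon}}{\partial\nu}\,dS_x.
\end{equation*}
Because $\tilde{u}_{\epsilon}=0$ on $\partial\Omega$, we have $\nabla\tilde{u}_{\epsilon}=(\partial_{\nu}\tilde{u}_{\epsilon})\nu$ there, so $\widetilde{W}|_{\partial\Omega}=\bigl((x-y)\cdot\nu\bigr)\partial_{\nu}\tilde{u}_{\epsilon}$. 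The boundary integral therefore becomes $\int_{\partial\Omega}\bigl((x-y)\cdot\nu\bigr)\partial_{\nu}\tilde{u}_{\epsilon}\,\partial_{\nu}\tilde{w}_{\epsilon}\,dS_x$.

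Finally, equating the two expressions for $\int_{\Omega}\nabla\widetilde{W}\cdot\nabla\tilde{w}_{\epsilon}$, the common integrand $\bigl(p\tilde{u}_{\epsilon}^{p-1}+\epsilon q\kappa\tilde{u}_{\epsilon}^{q-1}\bigr)\widetilde{W}\tilde{w}_{\epsilon}$ cancels, leaving precisely
\begin{equation*}
\int_{\partial\Omega}\bigl((x-y)\cdot\nu\bigr)\frac{\partial\tilde{u}_{\epsilon}}{\partial\nu}\frac{\partial\tilde{w}_{\epsilon}}{\partial\nu}\,dS_x=\frac{2(p-q)}{p-1}\epsilon\int_{\Omega}\kappa(x)\tilde{u}_{\epsilon}^{q}\tilde{w}_{\epsilon}\,dx+\epsilon\int_{\Omega}(x-y)\cdot\nabla\kappa(x)\,\tilde{u}_{\epsilon}^{q}\tilde{w}_{\epsilon}\,dx,
\end{equation*}
which is the desired identity. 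The only real obstacle is the careful bookkeeping in the derivation of $-\Delta\widetilde{W}$; once that identity is established, the rest is a standard Green-identity manipulation identical in spirit to Corollary \ref{conseq}, and no new analytic input (decay, compactness, regularity) is required beyond $\tilde{u}_{\epsilon},\tilde{w}_{\epsilon}\in C^{2}(\Omega)\cap C^{1}(\overline{\Omega})$ with zero Dirichlet data and $\kappa\in C^{2}(\overline{\Omega})$.
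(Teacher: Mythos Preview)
Your proposal is correct and follows essentially the same route as the paper: introduce the Pohozaev-type function $(x-y)\cdot\nabla\tilde{u}_{\epsilon}+\tfrac{2}{p-1}\tilde{u}_{\epsilon}$, compute its Laplacian (picking up the extra $\epsilon((x-y)\cdot\nabla\kappa)\tilde{u}_{\epsilon}^{q}$ term from the variable coefficient), then test against $\tilde{w}_{\epsilon}$ both ways and subtract. The only remark is that your symbol $\widetilde{W}$ clashes with the paper's $\tilde{W}_{\epsilon}$ defined in \eqref{1-1-miu}; the paper uses $\varpi$ for this auxiliary function.
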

\begin{proof}
Set
\begin{equation}\label{wwxx}
\varpi(x):=\left((x-x_{\epsilon})\cdot\nabla \tilde{u}_{\epsilon}+\frac{2}{p-1}\tilde{u}_{\epsilon}\right).
\end{equation}
Then, for any $y\in\mathbb{R}^N$, we have
\begin{equation*}
\begin{split}
-\Delta \varpi(x)&=\frac{2p}{p-1}(-\Delta \tilde{u}_{\epsilon})-\sum_{j=1}^{N}\sum_{i\neq j}^{N}(x_i-y_j)\frac{\partial^3\tilde{u}_{\epsilon}}{\partial x_i\partial^2x_j}\\&
=\frac{2p}{p-1}\left(u_{\epsilon}^{p}+\epsilon\kappa(x) \tilde{u}_{\epsilon}^{q}\right)+\big(p\tilde{u}_{\epsilon}^{p-1}+\epsilon q\kappa(x)\tilde{u}_{\epsilon}^{q-1}\big)(x-y)\cdot\nabla \tilde{u}_{\epsilon}+\epsilon(x-y)\cdot\nabla\kappa(x)\tilde{u}_{\epsilon}^{q}\\&
=\big(p\tilde{u}_{\epsilon}^{p-1}+\epsilon q\kappa(x)\tilde{u}_{\epsilon}^{q-1}\big)\varpi(x)+\frac{2(p-q)}{p-1}\epsilon\kappa(x) \bar{u}_{\epsilon}^q+\epsilon(x-y)\cdot\nabla\kappa(x)\tilde{u}_{\epsilon}^q.
\end{split}
\end{equation*}
Multiplying this identity by $\tilde{w}_{\epsilon}$ and integrating by parts, we deduce
\begin{equation*}
\begin{split}
\int_{\Omega}\nabla \varpi(x)\cdot\nabla \tilde{w}_{\epsilon}dx=&\int_{\Omega}\big(p\tilde{u}_{\epsilon}^{p-1}+\epsilon q\kappa(x)\tilde{u}_{\epsilon}^{q-1}\big)\varpi(x)\tilde{w}_{\epsilon}dx\\&+\frac{2(p-q)}{p-1}\epsilon \int_{\Omega}\kappa(x)\tilde{u}_{\epsilon}^q\tilde{w}_{\epsilon}dx+\epsilon\int_{\Omega}(x-y)\cdot\nabla\kappa(x)\tilde{u}_{\epsilon}^q\tilde{w}_{\epsilon}dx.
\end{split}
\end{equation*}
On the other hand, we have
\begin{equation*}
			\begin{split}
\int_{\Omega}&\nabla \varpi(x)\cdot\nabla \tilde{w}_{\epsilon}dx-\int_{\partial\Omega}\Big((x-y)\cdot\nabla \tilde{u}_{\epsilon}\Big)\frac{\partial \tilde{w}_{\epsilon}}{\partial\nu}dS_x=\int_{\Omega}\big(p\tilde{u}_{\epsilon}^{p-1}+\epsilon q\kappa(x)\tilde{u}_{\epsilon}^{q-1}\big)\tilde{w}_{\epsilon}\varpi(x)dx.
\end{split}
		\end{equation*}
Combining these identities together yield the conclusion.
\end{proof}	
Hence similar to Lemma \ref{qiegao}, we also have
\begin{lem}\label{1-qiegao}
It holds
\begin{equation*}
\begin{split}
			\int_{\partial\Omega}\frac{\partial \tilde{u}_{\epsilon}}{\partial x_j}\frac{\partial \tilde{w}_{\epsilon}}{\partial\nu}dS_x=\epsilon\int_{\Omega} \tilde{u}_{\epsilon}^{q}\tilde{w}_{\epsilon}\frac{\partial \kappa(x)}{\partial x_j}dx\quad\mbox{for any}\quad j=1,\cdots,N.
\end{split}
\end{equation*}
	\end{lem}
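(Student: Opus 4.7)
The plan is to derive this identity by a Pohozaev--type manipulation analogous to Lemma \ref{qiegao}, the only new feature being the extra term produced when differentiating the variable coefficient $\kappa(x)$.

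First I would differentiate the equation $-\Delta \tilde{u}_{\epsilon}=\tilde{u}_{\epsilon}^{p}+\epsilon\kappa(x)\tilde{u}_{\epsilon}^{q}$ with respect to $x_j$ to obtain
\begin{equation*}
-\Delta \Bigl(\frac{\partial \tilde{u}_{\epsilon}}{\partial x_j}\Bigr)=\bigl(p\tilde{u}_{\epsilon}^{p-1}+\epsilon q\kappa(x)\tilde{u}_{\epsilon}^{q-1}\bigr)\frac{\partial \tilde{u}_{\epsilon}}{\partial x_j}+\epsilon\tilde{u}_{\epsilon}^{q}\frac{\partial\kappa(x)}{\partial x_j}\quad\text{in }\Omega.
\end{equation*}
Then I would multiply both sides by $\tilde{w}_{\epsilon}$ and integrate over $\Omega$, applying Green's second identity to the left-hand side. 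Since $\tilde{w}_{\epsilon}=0$ on $\partial\Omega$ (from \eqref{nondegene}), the boundary integral involving $\tilde{w}_{\epsilon}$ itself vanishes, leaving only $\displaystyle\int_{\partial\Omega}\frac{\partial \tilde{u}_{\epsilon}}{\partial x_j}\frac{\partial \tilde{w}_{\epsilon}}{\partial\nu}\,dS_x$ as the surviving boundary contribution.

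Next I would use the linearized equation \eqref{nondegene} to rewrite the remaining interior term $-\int_{\Omega}\frac{\partial \tilde{u}_{\epsilon}}{\partial x_j}\Delta \tilde{w}_{\epsilon}\,dx$ as $\int_{\Omega}\bigl(p\tilde{u}_{\epsilon}^{p-1}+\epsilon q\kappa(x)\tilde{u}_{\epsilon}^{q-1}\bigr)\tilde{w}_{\epsilon}\frac{\partial \tilde{u}_{\epsilon}}{\partial x_j}\,dx$. This term is now identical to the one appearing on the right-hand side after the multiplication--integration step, so it cancels out cleanly, and only the boundary integral together with the contribution $\epsilon\int_{\Omega}\tilde{u}_{\epsilon}^{q}\tilde{w}_{\epsilon}\frac{\partial\kappa(x)}{\partial x_j}\,dx$ from the variable coefficient remain. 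Rearranging yields exactly the identity.

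There is no serious obstacle here; the argument is a direct adaptation of Lemma \ref{qiegao} with the test function $\partial_{x_j}\tilde{u}_{\epsilon}$ in place of $\partial_{x_j}u_{\epsilon}$. The only subtlety worth checking is the legitimacy of the integration by parts: one needs $\partial_{x_j}\tilde{u}_{\epsilon}\in H^{1}(\Omega)$ and $\tilde{w}_{\epsilon}\in H^{2}(\Omega)\cap H_{0}^{1}(\Omega)$, both of which follow from standard elliptic regularity applied to \eqref{ele-1.1-1} and \eqref{nondegene} together with the smoothness of $\kappa$ and $\partial\Omega$. Once these regularity issues are disposed of, the cancellation is algebraic and the identity follows immediately.
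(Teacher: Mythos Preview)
Your proposal is correct and follows essentially the same route as the paper: the paper simply says ``similar to Lemma \ref{qiegao}'' (whose proof in turn refers to the computations \eqref{de11}--\eqref{d00}), which amounts precisely to differentiating the equation for $\tilde u_\epsilon$ in $x_j$, testing against $\tilde w_\epsilon$, and comparing with the linearized equation \eqref{nondegene} tested against $\partial_{x_j}\tilde u_\epsilon$, so that the bulk terms cancel and only the boundary term and the $\partial_{x_j}\kappa$ contribution survive.
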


We are now ready to prove Theorem \ref{Figa}.
\begin{proof}[Proof of Theorem \ref{Figa}]
Assume by contradiction that \eqref{nondegene} has a non-trivial solution $v_{\varepsilon}$, by Lemma \ref{UV1-1} and Proposition \ref{prondgr}, then there exists a $\tilde{w}_{0}$ such that $\tilde{W}_{\epsilon}\rightarrow \tilde{w}_{0}$ in $C_{loc}^{2}(\mathbb{R}^N)$ and $\tilde{w}_{0}$ solves \eqref{WP}. Moreover, since $\int_{\Omega_{\epsilon}}|\nabla \tilde{W}_{\epsilon}|\leq C$ by \eqref{WSTAR}, then by combining the Fatou's Lemma and Proposition \ref{prondgr}, we get that
\begin{equation}\label{w0}
\tilde{w}_{0}(x)=\sum_{k=1}^{N}\frac{\alpha_k x_k}{(N(N-2)+|x|^2)^{\frac{N}{2}}}+\beta\frac{N(N-2)-|x|^2}{(N(N-2)+|x|^2)^{\frac{N}{2}}}\quad \mbox{for some}\quad\alpha_k,~\beta\in\mathbb{R}.
\end{equation}
Next it is enough to prove $\beta=0$, $\alpha_k=0$ in \eqref{w0} and further deduce that $\tilde{w}_{0}=0$ cannot occur. It consists of three steps.

\begin{step}\label{Step 1.}
 $\beta=0$.
\end{step}
To this end, our purpose in what follows is to prove the following result.
\begin{Prop}\label{m00-1}
There exists $\beta\neq0$ such that
\begin{equation}\label{number-1}
\big\|\tilde{u}_{\epsilon}\big\|_{\infty}\tilde{w}_{\epsilon}(x)\rightarrow \beta p\frac{\omega_N}{2}\Big(\frac{\Gamma(\frac{N}{2})\Gamma(2)-\Gamma(\frac{N}{2}+1)\Gamma(1)}{\Gamma(N/2+2)}\Big)G(x,x_0)\quad\mbox{in}\quad C^{1,\alpha}(\omega),
\end{equation}
for $\alpha\in(0,1)$ as $\epsilon\rightarrow0$.
\end{Prop}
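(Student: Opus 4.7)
The strategy mirrors the proof of Lemma \ref{m00}, adapted to the linearized equation \eqref{nondegene} satisfied by $\tilde w_\epsilon$ and the rescaling \eqref{1-1-miu}. First I would write the distributional equation
\begin{equation*}
-\Delta\bigl(\|\tilde u_\epsilon\|_\infty \tilde w_\epsilon\bigr) = \|\tilde u_\epsilon\|_\infty\bigl(p\tilde u_\epsilon^{p-1} + \epsilon q\kappa(x)\tilde u_\epsilon^{q-1}\bigr)\tilde w_\epsilon \quad \text{in } \Omega,
\end{equation*}
and integrate over $\Omega$. Changing variables $x=\|\tilde u_\epsilon\|_\infty^{-(p-1)/2}y+x_\epsilon$ with Jacobian $\|\tilde u_\epsilon\|_\infty^{-N(p-1)/2}$, and noting that $\tilde u_\epsilon(x)=\|\tilde u_\epsilon\|_\infty\tilde U_\epsilon(y)$ while $\tilde w_\epsilon(x)=\|\tilde u_\epsilon\|_\infty\tilde W_\epsilon(y)$, the total exponent of $\|\tilde u_\epsilon\|_\infty$ collapses to $0$ in front of the $\tilde U_\epsilon^{p-1}\tilde W_\epsilon$ integral, and to a small factor $\epsilon\|\tilde u_\epsilon\|_\infty^{-(p-q)}=o(1)$ in front of the $\tilde U_\epsilon^{q-1}\tilde W_\epsilon$ integral (by \eqref{1-Fn} and the fact that $q<p$). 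Using the bounds \eqref{1-dus} and \eqref{WSTAR}, the continuity of $\kappa$ at $x_0$, together with $\tilde U_\epsilon\to W$ and $\tilde W_\epsilon\to \tilde w_0$ in $C^2_{\rm loc}(\mathbb{R}^N)$ from Lemma \ref{UV1-1}, dominated convergence gives
\begin{equation*}
\int_\Omega\|\tilde u_\epsilon\|_\infty\bigl(p\tilde u_\epsilon^{p-1} + \epsilon q\kappa\tilde u_\epsilon^{q-1}\bigr)\tilde w_\epsilon\,dx\;\longrightarrow\; p\int_{\mathbb{R}^N} W^{p-1}\tilde w_0\,dy.
\end{equation*}

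By the explicit form \eqref{w0} of $\tilde w_0$, the odd components $\alpha_k x_k/(N(N-2)+|x|^2)^{N/2}$ integrate to zero against the radial weight $W^{p-1}$, and only the coefficient $\beta$ survives; the resulting radial integral is precisely the one already computed in \eqref{idayu2}, yielding $\beta\cdot p(\omega_N/2)\bigl[\Gamma(N/2)\Gamma(2)-\Gamma(N/2+1)\Gamma(1)\bigr]/\Gamma(N/2+2)$. Next, for any fixed $x\neq x_0$, the same decay analysis carried out in the proof of Theorem \ref{Figalli-1} (using \eqref{1-dus}, \eqref{WSTAR}, $\kappa\in L^\infty(\Omega)$, and the fact that $(N-2)(q-1)q/2-p+1>0$ since $q>1$) gives pointwise convergence of the RHS to zero. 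Combining the integral and pointwise conclusions, in the sense of distributions in $\Omega$,
\begin{equation*}
-\Delta\bigl(\|\tilde u_\epsilon\|_\infty \tilde w_\epsilon\bigr)\;\longrightarrow\; \beta p\frac{\omega_N}{2}\Big(\frac{\Gamma(\frac{N}{2})\Gamma(2)-\Gamma(\frac{N}{2}+1)\Gamma(1)}{\Gamma(N/2+2)}\Big)\delta_{x_0}.
\end{equation*}

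Finally, I would invoke Lemma \ref{regular} on any neighborhood $\omega$ of $\partial\Omega$ (more generally any compact subset of $\overline\Omega\setminus\{x_0\}$) to upgrade this weak convergence to $C^{1,\alpha}(\omega)$ convergence of $\|\tilde u_\epsilon\|_\infty\tilde w_\epsilon$ to $\beta$ times the constant above times $G(\cdot,x_0)$, by uniqueness of the Green function; that is, \eqref{number-1} with the stated constant. The non-vanishing $\beta\ne 0$ is not actually proved inside this proposition (its role is to isolate the $\beta$-coefficient of the limit); the sign/value of $\beta$ will be pinned down in Steps 1--2 of the contradiction argument using the Pohozaev-type identity of Lemma \ref{1-conseq} and the boundary identity of Lemma \ref{1-qiegao}, together with nondegeneracy of the matrix \eqref{CC}.

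The main obstacle is bookkeeping the scaling in the $\epsilon\kappa(x)\tilde u_\epsilon^{q-1}$ term, which was absent of the $\kappa$ factor in Lemma \ref{m00}: one must use continuity of $\kappa$ at $x_0$ to pass the $\kappa(\|\tilde u_\epsilon\|_\infty^{-(p-1)/2}y+x_\epsilon)\to\kappa(x_0)$ limit inside the integral, then exploit the rate \eqref{1-Fn} to absorb the $\epsilon\|\tilde u_\epsilon\|_\infty^{-(p-q)}$ factor as $o(1)$; boundedness of $\kappa$ on $\overline\Omega$ further handles the pointwise decay away from $x_0$.
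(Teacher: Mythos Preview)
Your proposal is correct and follows precisely the approach the paper intends: the paper's own proof simply reads ``It is similar to proof of Lemma \ref{m00}, and so we omit it,'' and you have faithfully carried out that adaptation, including the correct handling of the extra $\kappa$-factor via continuity at $x_0$, the modified scaling (the power $\|\tilde u_\epsilon\|_\infty^1$ rather than $\|\tilde u_\epsilon\|_\infty^2$, compensated by the definition \eqref{1-1-miu}), and the absorption of the lower-order term through \eqref{1-Fn}. Your remark that $\beta\neq 0$ is not actually established inside this proposition but is rather a hypothesis for the subsequent contradiction in Step~\ref{Step 1.} is also an accurate reading of how the statement is used.
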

\begin{proof}
It is similar to proof of Lemma \ref{m00}, and so we omit it.
\end{proof}

By Lemma \ref{1-conseq}, we have
\begin{equation}\label{x+0}
\begin{split}
\int_{\partial\Omega}&((x-x_0),\nu)\frac{\partial (\|\tilde{u}_{\epsilon}\|_{\infty}\tilde{u}_{\epsilon})}{\partial\nu}\frac{\partial( \|\tilde{u}_{\epsilon}\|_{\infty}\tilde{w}_{\epsilon})}{\partial\nu}dS_x\\&=\frac{2(p-q)}{p-1}\epsilon \|\tilde{u}_{\epsilon}\|_{\infty}^2\int_{\Omega}\kappa(x)\tilde{u}_{\epsilon}^q\tilde{w}_{\epsilon}dx+\epsilon\|\tilde{u}_{\epsilon}\|_{\infty}^2\int_{\Omega}(x-y)\cdot\nabla\kappa(x)\tilde{u}_{\epsilon}^q\tilde{w}_{\epsilon}dx.
\end{split}
\end{equation}
From \eqref{GX0}, \eqref{1-ifini} and \eqref{number-1}, we obtain
\begin{equation}\label{x+0-1}
LHS\hspace{2mm}\mbox{of}\hspace{2mm}\eqref{x+0}\rightarrow\beta(N-2)R_N|\phi(x_0)|,
\end{equation}
where $R_N$ can be found in the beginning of proof of Theorem \ref{Figalli}.
On the other hand, we first denote $$\mathcal{K}(x):=\frac{2(p-q)}{p-1}\kappa(x)+(x-y)\cdot\nabla\kappa(x).$$
Then we have $\mathcal{K}$ is continuous on $\Omega$ and $\mathcal{K}(\|u_{\epsilon}\|_{\epsilon}^{-\frac{p-1}{2}}y+x_{\epsilon})\rightarrow\frac{2(p-q)}{p-1}\kappa(x_0)$ uniformly on compact sets of $\mathbb{R}^N$. Thus, by the oddness of the integrand, \eqref{1-Fn} and \eqref{w0}, we obtain
\begin{equation}\label{x+0-2}
\begin{split}
&RHS\hspace{2mm}\mbox{of}\hspace{2mm}\eqref{x+0}\\&\rightarrow\epsilon\|\tilde{u}_{\epsilon}\|_{\infty}^{q+2-p}
\frac{2(p-q)}{p-1}\kappa(x_0)\int_{\mathbb{R}^N}W^q\Big[\sum_{k=1}^{N}\frac{\alpha_k y_k}{(N(N-2)+|y|^2)^{\frac{N}{2}}}+\beta\frac{N(N-2)-|y|^2}{(N(N-2)+|y|^2)^{\frac{N}{2}}}\Big]dy\\&
=\beta\frac{2(p-q)}{p-1}B_{p,q}\phi(x_0)\int_{\mathbb{R}^N}W^q\frac{N(N-2)-|y|^2}{(N(N-2)+|y|^2)^{\frac{N}{2}}}dy+o(1)\\&
=\beta\frac{2(p-q)}{p-1}B_{p,q}\mathcal{E}_N\phi(x_0),
\end{split}
\end{equation}
where $R_N$ can be found in the proof of Theorem \ref{Figalli}.
Coupling \eqref{x+0-1} and \eqref{x+0-2}, we conclude that, if $\beta\neq0$,
$$-(N-2)R_N=\frac{2(p-q)}{p-1}B_{p,q}\mathcal{E}_N.$$
Recalling the argument of four cases on $q\in(\max\{1,\frac{6-N}{N-2}\},p)$ if $N\geq3$, we have $\frac{2(p-q)}{p-1}B_{p,q}\mathcal{E}_N+(N-2)R_N<0$,
a contradiction. Hence we complete the proof of first step.
\begin{step}
$\alpha_k=0$.
\end{step}
 At this point it essentially remains to prove the following result.
\begin{Prop}\label{6.3lemma-1}
Assume that $1<q<p$ if $N\geq5$, $\frac{3}{2}<q<p$ if $N=4$. Let $\beta=0$ and $\alpha_k\neq0$ in \eqref{w0}. Then
\begin{equation*}
\big\|\tilde{u}_{\epsilon}\big\|_{\infty}^{\frac{p+1}{2}}\tilde{w}_{\epsilon}=\frac{1}{N-2}\int_{\mathbb{R}^N}W^pdy\sum_{k=1}^{N}\alpha_{k}\left(\frac{\partial}{\partial y_k}G\left(x,y\right)\right)\Big|_{y=x_{0}}
+o(1)\quad\mbox{in}\quad C_{loc}^1\big(\overline{\Omega}\setminus\{x_0\}\big).
\end{equation*}
\end{Prop}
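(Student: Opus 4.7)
The plan is to adapt the proof of Lemma \ref{6.3lemma} to the present nondegeneracy setting, keeping track of two modifications. First, the rescaling $\tilde W_\epsilon(x)=\|\tilde u_\epsilon\|_\infty^{-1}\tilde w_\epsilon(\|\tilde u_\epsilon\|_\infty^{-(p-1)/2}x+x_\epsilon)$ rescales also the amplitude of $\tilde w_\epsilon$, producing an extra factor $\|\tilde u_\epsilon\|_\infty^{-1}$ relative to the setting of Lemma \ref{6.3lemma}, which is precisely why the normalization exponent drops from $(p+3)/2$ to $(p+1)/2$. Second, the coefficient $\kappa\in C^2(\overline\Omega)$ now appears in the subcritical integrand; since $\kappa(\|\tilde u_\epsilon\|_\infty^{-(p-1)/2}z+x_\epsilon)\to\kappa(x_0)$ uniformly on compact sets of $\mathbb{R}^N$, it enters only as a multiplicative constant to leading order.

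Starting from Green's representation for \eqref{nondegene},
$$\tilde w_\epsilon(x)=p\int_\Omega G(x,y)\tilde u_\epsilon^{p-1}\tilde w_\epsilon\,dy+\epsilon q\int_\Omega\kappa(y)G(x,y)\tilde u_\epsilon^{q-1}\tilde w_\epsilon\,dy=:\mathcal{I}_1+\mathcal{I}_2,$$
the change of variables $y=\|\tilde u_\epsilon\|_\infty^{-(p-1)/2}z+x_\epsilon$ recasts $\mathcal{I}_1$ and $\mathcal{I}_2$ as integrals over $\tilde\Omega_\epsilon$ with prefactors $p\|\tilde u_\epsilon\|_\infty^{-1}$ and $\epsilon q\|\tilde u_\epsilon\|_\infty^{q-2N/(N-2)}$ respectively. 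Since $\beta=0$ in \eqref{w0} and $\alpha\neq 0$, $\tilde W_\epsilon$ converges in $C^1_{loc}(\mathbb{R}^N)$ to $\sum_k\alpha_k z_k/(N(N-2)+|z|^2)^{N/2}$. Introducing an orthogonal matrix $\mathcal{B}$ whose first column is $\alpha/|\alpha|$ and writing $z=\mathcal{B}\xi$, the rescaled limit becomes $|\alpha|\xi_1/(N(N-2)+|\xi|^2)^{N/2}$, and the presence of the factor $\xi_1$ is what makes integration by parts in $\xi_1$ productive.

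Split the analysis into the same two ranges as in Lemma \ref{6.3lemma}. In Case 1 ($\frac{N}{N-2}<q<p$), define the primitives
$$\mathcal{T}_\epsilon^1(\xi)=\int_{-\infty}^{\xi_1}\tilde U_\epsilon^{p-1}(\mathcal{B}(s,\xi'))\tilde W_\epsilon(\mathcal{B}(s,\xi'))\,ds,\quad \mathcal{T}_\epsilon^2(\xi)=\int_{-\infty}^{\xi_1}\kappa\!\left(\tfrac{\mathcal{B}(s,\xi')}{\|\tilde u_\epsilon\|_\infty^{(p-1)/2}}+x_\epsilon\right)\tilde U_\epsilon^{q-1}\tilde W_\epsilon\Big|_{\mathcal{B}(s,\xi')}\,ds,$$
with $\xi'=(\xi_2,\ldots,\xi_N)$; integrate by parts in $\xi_1$ (the chain rule supplies an extra factor $\|\tilde u_\epsilon\|_\infty^{(p-1)/2}$ that, combined with the prefactor $\|\tilde u_\epsilon\|_\infty^{-1}$, produces the normalization $\|\tilde u_\epsilon\|_\infty^{(p+1)/2}$); decompose $\tilde\Omega_\epsilon=D_1\cup D_2$ with $D_1=\{|z|<\tfrac12|x-x_\epsilon|\|\tilde u_\epsilon\|_\infty^{(p-1)/2}\}$; and use \eqref{GG}, \eqref{1-dus}, \eqref{WSTAR} together with dominated convergence (exploiting $\partial_{y_k}G(x,\cdot)\to\partial_{y_k}G(x,x_0)$ uniformly on compacta of $\overline\Omega\setminus\{x_0\}$) to extract the leading contribution from $D_1$ and control the remainder on $D_2$. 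In Case 2 ($\frac{3}{N-2}<q\leq\frac{N}{N-2}$, which forces $N\geq 4$), the Case 1 argument still applies to $\mathcal{I}_1$; for $\mathcal{I}_2$ we invoke Lemma \ref{Ve} to solve the initial value problem
$$\sum_{k=1}^N\alpha_k\frac{\partial w_\epsilon}{\partial y_k}=\tilde U_\epsilon^{q-1}\tilde W_\epsilon,\qquad w_\epsilon\big|_{\Gamma_\alpha}=-\frac{W^q}{q(N-2)},$$
whose solution decays as $w_\epsilon=O(|y|^{-(N-2)q-1})$. Combining $\tilde\Omega_\epsilon\subset B_{c\|\tilde u_\epsilon\|_\infty^{(p-1)/2}}(0)$ with \eqref{1-Fn} then gives $\|\tilde u_\epsilon\|_\infty^{(p+1)/2}|\mathcal{I}_2|=O\bigl(\|\tilde u_\epsilon\|_\infty^{-2(q-3/(N-2))}\bigr)=o(1)$; the sharp condition $q>3/(N-2)$ translates exactly to $q>1$ for $N\geq 5$ and $q>3/2$ for $N=4$, matching the hypothesis.

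The main obstacle will be the bookkeeping in Case 2: tracking the $y$-dependence of $\kappa$ inside the primitive construction (because $\kappa\in C^2(\overline\Omega)$, the difference $\kappa(\cdot)-\kappa(x_0)=O(\|\tilde u_\epsilon\|_\infty^{-(p-1)/2}|z|)$ contributes a correction carrying an additional $o(1)$ factor that is absorbed into the error term), and verifying that all the $\|\tilde u_\epsilon\|_\infty$-powers balance consistently across the Case 1 / Case 2 split — in particular that the extra $\|\tilde u_\epsilon\|_\infty^{-1}$ from the amplitude rescaling does not spoil the sharpness of the condition $q>3/(N-2)$. Once these estimates are assembled, collecting the leading contribution from $D_1$ inside $\mathcal{I}_1$ with the negligible contributions from $\mathcal{I}_2$ and $D_2$ yields the claimed asymptotic formula in $C^1_{loc}(\overline\Omega\setminus\{x_0\})$.
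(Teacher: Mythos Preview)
Your proposal is correct and follows essentially the same route as the paper: the paper's proof is itself only a sketch that refers back to Lemma \ref{6.3lemma}, splitting into the same two cases (using the orthogonal matrix $\mathcal{B}$ and the primitives $\mathcal{T}_\epsilon^i$ in Case 1, and Lemma \ref{Ve} in Case 2), and you have correctly identified both the shift in normalization exponent from $(p+3)/2$ to $(p+1)/2$ caused by the amplitude rescaling of $\tilde W_\epsilon$ and the role of the threshold $q>3/(N-2)$. Your treatment of the $\kappa$-dependence is in fact more explicit than the paper's, which silently absorbs it into the $o(1)$ terms.
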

\begin{proof}
The proof of this Lemma is very similar to that of Lemma \ref{6.3lemma}. Therefore, we will just sketch it.
By Green's representation formula, we get
\begin{equation}\label{115if}
\begin{split}
\tilde{w}_{\epsilon}(x)&=p\int_{\Omega}G(x,y)u_{\epsilon}^{p-1}\tilde{w}_{\epsilon}dx+\epsilon q\int_{\Omega}\kappa(x)u_{\epsilon}^{q-1}\tilde{w}_{\epsilon}dx
=:J_1+J_2.
\end{split}
\end{equation}
Next we consider two cases, depending choice of $N$ and $q$.

\textbf{Case 1:} $\frac{N}{N-2}<q<p$ if $N\geq4$.

For $J_1$. Similar to the case 1 of $I_1$ in Lemma \ref{6.3lemma} except some minor modifications, we have
\begin{equation*}
\begin{split}
J_1+J_2
=\frac{1}{N-2}\frac{1}{\|u_{\epsilon}\|_{\infty}^{(p+1)/2}}\int_{\mathbb{R}^N}W^pdy\sum_{k=1}^{N}\alpha_{k}^{i}\left(\frac{G\left(x,y\right)}{\partial y_k}\right)\big|_{y=x_0}dy+o(1)\hspace{2mm}\mbox{in}\hspace{2mm}\overline{\Omega}\setminus\{x_0\}.
\end{split}
\end{equation*}
by apply the map $y=\mathcal{B}\xi\Longleftrightarrow y_k=\sum_{l=1}^{N}\beta_{kl}\xi_l$, where $\mathcal{B}=(\beta_{kl})_{k,l=1,\cdots,N}$ is an orthogonal matrix which maps the vector $(|\alpha|,0,\cdots,0)$ into $\alpha=(\alpha_1,\cdots,\alpha_N)$.

\textbf{Case 2:} $\min\{\frac{3}{N-2},1\}<q\leq\frac{N}{N-2}$ if $N\geq4$.

Similarly to case 2 of $I_1$ in Lemma \ref{6.3lemma}, we also have
$$
J_1=\frac{1}{N-2}\frac{1}{\|u_{\epsilon}\|_{\infty}^{(p+1)/2}}\int_{\mathbb{R}^N}W^pdy\sum_{k=1}^{N}\alpha_{k}^{i}\left(\frac{G\left(x,y\right)}{\partial y_k}\right)\big|_{y=x_0}dy+o(1)\hspace{2mm}\mbox{in}\hspace{2mm}\overline{\Omega}\setminus\{x_0\}.
$$
by some computations.
For $J_2$, we find
\begin{equation*}
\begin{split}
J_2=\frac{\epsilon q}{\|u_{\epsilon}\|_{\infty}^{p+1-q}}
\int_{\Omega_\epsilon}\tilde{U}_{\epsilon}^{q-1}(y)\tilde{W}_{\epsilon}(y) G\left(x,\frac{y}{\big\|u_{\epsilon}\big\|_{\infty}^{\frac{p-1}{2}}}+x_{\epsilon}\right)dy. \end{split}
\end{equation*}
By Lemma \ref{Ve}, there exists a unique solution $\bar{w}$ of the following initial value problem and combining $\Omega_{\epsilon}\subset B_{c\|u_{\epsilon}\|_{\infty}^{(p-1)/2}}(0)$ for some $c>0$, we get
\begin{equation*}
\begin{split}
\Big|\int_{\Omega_{\epsilon}}\bar{w}_{\epsilon}(y)dy\Big|
\leq C\big\|u_{\epsilon}\big\|_{\infty}^{\frac{p-1}{2}(N+1-(N-2)q)},
\end{split}
\end{equation*}
where $1<q\leq\frac{N}{N-2}$. Hence
\begin{equation*}
\begin{split}
\big\|u_{\epsilon}\big\|_{\infty}^{\frac{p+1}{2}}|I_2|&=\frac{\lambda_{i,\epsilon}\epsilon q}{\|u_{\epsilon}\|_{\infty}^{p+1-q-(p+1)/2}}
\int_{\Omega_\epsilon}\sum_{k=1}^{N}\alpha_{k}\frac{\bar{w}(y)}{\partial y_k} G\left(x,\frac{y}{\big\|u_{\epsilon}\big\|_{\infty}^{\frac{p-1}{2}}}+x_{\epsilon}\right)dy\\&
\leq C\frac{1}{\|u_{\epsilon}\|_{\infty}^{2}}\big|\int_{\Omega_\epsilon} \bar{w}_{\epsilon}(y)dy\big|\leq C\frac{1}{\|u_{\epsilon}\|_{\infty}^{2(q-\frac{3}{N-2})}}=o(1),
\end{split}
\end{equation*}
where $q>\frac{3}{N-2}$.
Combining the estimates of $J_1$ and $J_2$ in the above two cases, we conclude the proof.
\end{proof}

We now turn to the proof of second step.
By the previous step we get
\begin{equation*}
\tilde{W}_{\epsilon}(y)\rightarrow\sum_{k=1}^{N}\frac{\alpha_k y_k}{(N(N-2)+|y|^2)^{\frac{N}{2}}}\quad\mbox{in}\quad C_{loc}^{1}(\mathbb{R}^N).
\end{equation*}
Then we have
$$\tilde{U}_{\epsilon}^{q}(y)\tilde{W}_{\epsilon}(y)\rightarrow-\frac{1}{(q+1)(N-2)}\sum_{k=1}^{N}\alpha_k \frac{\partial}{ \partial y_k}W^{q+1}(y).$$
Thus Lemma \ref{Ve} tell us that there exist a solution $\bar{w}_{\epsilon}$ to problem \eqref{kapax} for $f:=\tilde{U}_{\epsilon}^{q}(y)\tilde{W}_{\epsilon}(y)$ such that
\begin{equation}\label{wyifa}
\int_{\mathbb{R}^N}\bar{w}_{\epsilon}(y)dy\rightarrow-\int_{\mathbb{R}^N}\frac{W^{q+1}(y)}{(q+1)(N-2)}dy.
\end{equation}
Next similar to the proof of Theorem \ref{remainder terms}, and  by the virtue of Lemma \ref{1-qiegao}, Proposition \ref{6.3lemma-1}, \eqref{1-ifini} and \eqref{wyifa} we have
\begin{equation*}
\begin{split}
&\frac{(N(N-2))^{\frac{N-2}{2}}}{2}\omega_N\int_{\mathbb{R}^N}W^pdx\sum_{k=1}^{N}\alpha_{k}\Big(\frac{\partial^2\phi}{\partial x_k\partial x_j}(x_0)\Big)\\&=
			\int_{\partial\Omega}\frac{\partial (\|\tilde{u}_{\epsilon}\|_{\infty}\tilde{u}_{\epsilon})}{\partial x_j}\frac{\partial (\|\tilde{u}_{\epsilon}\|_{\infty}^{\frac{p+1}{2}}\tilde{w}_{\epsilon})}{\partial\nu}dS_x
=\epsilon\big\|\tilde{u}_{\epsilon}\big\|_{\infty}^{\frac{p+3}{2}}\big\|\tilde{u}_{\epsilon}\big\|_{\infty}^{q-p}\int_{\Omega_{\epsilon}} \tilde{U}_{\epsilon}^{q}\tilde{W}_{\epsilon}\frac{\partial \kappa}{\partial x_j}\big(\big\|\tilde{u}_{\epsilon}\big\|_{\infty}^{-\frac{p-1}{2}}y+x_{\epsilon}\big)dy\\&
=-\epsilon\big\|\tilde{u}_{\epsilon}\big\|_{\infty}^{q+\frac{3-p}{2}}\big\|\tilde{u}_{\epsilon}\big\|_{\infty}^{-\frac{p-1}{2}}\int_{\Omega_{\epsilon}}
\bar{w}_{\epsilon}(y)\sum_{k=1}^{N}\alpha_k \frac{\partial^2 \kappa}{\partial x_k\partial x_j}\big(\big\|\tilde{u}_{\epsilon}\big\|_{\infty}^{-\frac{p-1}{2}}y+x_{\epsilon}\big)dy\\&
=\frac{B_{p,q}}{(q+1)(N-2)}\int_{\mathbb{R}^N}W^{q+1}(y)dy\frac{\phi(x_0)}{\kappa(x_0)}
\sum_{k=1}^{N}\alpha_k\Big(\frac{\partial^2 \kappa}{\partial x_k\partial x_j}(x_0)\Big),
\end{split}
\end{equation*}
where the last equality we used the limitation \eqref{1-Fn}. This implies that
$$
\sum_{k=1}^{N}\alpha_{k}\bigg[\frac{1}{\kappa(x_0)}\Big(\frac{\partial^2 \kappa}{\partial x_k\partial x_j}(x_0)\Big)-\Gamma_{p,q}\frac{1}{\phi(x_0)}\Big(\frac{\partial^2\phi}{\partial x_k\partial x_j}(x_0)\Big)\bigg]=0,
$$
where
$$\Gamma_{p,q}=\frac{a_N(p-q+1)N}{2b_N}\frac{\Gamma(\frac{N}{2})\Gamma(\frac{(N-2)(q+1)-N}{2})}{\Gamma(\frac{(N-2)(q+1)}{2})}$$
with
$$a_N=\int_{0}^{\infty}\frac{r^{N-1}dr}{(N(N-2)+r^2)^{\frac{N+2}{2}}}
\quad\mbox{and}\quad b_N=\int_{0}^{\infty}\frac{r^{N-1}dr}{(N(N-2)+r^2)^{\frac{N-2}{2}(q+1)}}.$$
Since matrix \eqref{CC} at $x_0$ is non-degenerate, we get that $\alpha_k=0$.

\begin{step}
$\tilde{w}_{0}=0$ cannot occur.
\end{step}

By the previous steps we have that $\tilde{W}_{\epsilon}\rightarrow\tilde{w}_{0}\equiv0$ as $\epsilon\rightarrow0$. Let $\xi_\epsilon$ be a the point where $\tilde{W}_{\epsilon}$ achieves its maximum, i.e. $\tilde{W}_{\epsilon}(\xi_{\epsilon})=\|\tilde{W}_{\epsilon}\|_{\infty}=1$.
Then necessarily $|\xi_{\epsilon}|\rightarrow\infty$, this is prevented by
Lemma \ref{1-UV1}. Theorem \ref{Figa} is proven.
\end{proof}

\small

\end{document}